\documentclass[10pt,a4paper]{amsart}
\usepackage{amssymb,mathrsfs,mathtools,comment,multirow,longtable,mfpic,caption,booktabs,url}
\textheight=\paperheight
\advance\textheight-1in
\advance\textheight-\topmargin
\advance\textheight-\headheight
\advance\textheight-\headsep
\advance\textheight-\footskip
\advance\textheight-\topmargin
\advance\textheight-1in

\evensidemargin=\oddsidemargin
\textwidth=\paperwidth
\advance\textwidth-1in
\advance\textwidth-\oddsidemargin
\advance\textwidth-\marginparsep
\advance\textwidth-\marginparwidth
\advance\textwidth-\oddsidemargin
\advance\textwidth-1in

\newcommand{\Z}{{\mathbb Z}}
\newcommand{\Q}{{\mathbb Q}}

\newcommand{\C}{{\mathbb C}}
\newcommand{\F}{{\mathbb F}}

\newtheorem{thrm}{Theorem}
\newtheorem{prop}{Proposition}
\newtheorem{lemma}{Lemma}
\newtheorem{defi}{Definition}

\newcommand{\ehat}{\widehat{e\vphantom{d}}}
\newcommand{\dhat}{\widehat{d}}
\allowdisplaybreaks
\date{\today}
\begin{document}
\title{On the power of the discriminant of a univariate polynomial as a certain determinant in positive characteristic}
\author{Akira Kurihara}
\address{Professor Emeritus, Japan Women's University, Dept. of Mathematical and Physical Sciences, 2-8-1 Mejirodai, Bunkyo-ku, 112-8681 Tokyo, Japan}
\email{kurihara@fc.jwu.ac.jp}
\begin{abstract}
Let $p$ be a prime.
Suppose that integers $r$, $e$, $d$ such that $r \ge 2$, $e \ge 0$, $0 \le d \le p$ are given.
Let $f(x)=s_0 x^r + s_1 x^{r-1} + \cdots + s_r$ be a generic polynomial of degree $r$ in characteristic $p$.
We put $f(x)^e=\sum_{i \ge 0} c_i x^i$.
We define a $d\times d$ matrix $M_d(f(x)^e)$ by $M_d(f(x)^e) = ( c_{i p + j - d -1})_{1 \le i,\, j \le d}$.
In this paper, we shall be concerned with the divisibility of $\det M_d(f(x)^e)$ by powers of the discriminant $\Delta(f(x))$ of $f(x)$.
First, assuming $s_0=1$, we study the condition under which $\det M_d(f(x)^e)$ is a positive power of $\Delta(f(x))$ multiplied by a non-zero constant in $\F_p$.
Second, for such matrices when $d=r-1$, we present a formula for $M_d(f(x)^e)^{-1} M_d(f(x)^{e+1})$ involving the B\'ezout matrix of $f'(x)$ and $f(x)-\frac{1}{r} x f'(x)$.
Finally, we present two similar experimental equalities, the first of which involves the determinant $\det M_d(f(x)^e)$.
\end{abstract}
\subjclass
[2020]
{
11T06, 11C20, 15B33
}
\keywords
{
univariate polynomial, discriminant, determinant, positive characteristic, B\'ezout matrix, SageMath
%
}
\maketitle
\tableofcontents
%
%
%
%
%
\section{Introduction}
%
%
\subsection{A history of discriminant as determinant}
Suppose two polynomials
$\displaystyle F(x)=\sum_{0 \le i \le m} a_{m-i}x^i=a_0 \prod_{1 \le i \le m} (x-\xi_i)$
and 
$\displaystyle G(x)=\sum_{0 \le i \le n} b_{n-i}x^i=b_0 \prod_{1 \le i \le n} (x-\eta_i)$
of degree $m$ and $n$ are given.
The Sylvester matrix ${\rm Syl}(F,G)$ of $F(x)$ and $G(x)$ is the $(m+n)\times(m+n)$ matrix given by
\begin{equation}\label{SylDef}
{\rm Syl}(F,G)=
\begin{pmatrix}
a_0 & a_1 & \cdots & a_m & \\
{} & \ddots & \ddots & {} & \ddots & \\
{} & {} & a_0 & a_1 & \cdots & a_m \\
b_0 & b_1 & \cdots & b_n & \\
{} & \ddots & \ddots & {} & \ddots & \\
{} & {} & b_0 & b_1 & \cdots & b_n 
\end{pmatrix},
\end{equation}
which satisfies
\begin{equation}\label{SylEq}
\det {\rm Syl}(F,G) = a_0^{n}\; b_0^{m} \prod_{1 \le i \le m,\ 1\le j \le n}(\xi_i - \eta_j).
\end{equation}
This determinant is
called the resultant of $F(x)$ and $G(x)$,
which we denote by ${\rm Res}(F,G)$.
For (\ref{SylDef}) and (\ref{SylEq}), see \cite{Syl1840} and \cite[Chapter 12]{GKZ}.
We put $d=\max(m,n)$.
The B\'ezout matrix ${\rm Bez}(F,G)$ of $F(x)$ and $G(x)$ is the $d \times d$ matrix determined by
\begin{equation}\label{BezDef}
\frac{F(x)G(y)-F(y)G(x)}{x-y} = 
\begin{pmatrix}
1 & x & \ldots & x^{d-1}
\end{pmatrix}
{\rm Bez}(F,G)
\begin{pmatrix}
1 \\ y \\ \vdots \\ y^{d-1}
\end{pmatrix},
\end{equation}
which satisfies
\begin{equation}\label{BezEq}
\det {\rm Bez}(F,G) = 
\left.
\begin{cases}
(-1)^{d(d-1)/2} & (m \ge n) \\
(-1)^{d(d-1)/2} (-1)^d (-1)^{mn} & (m \le n)
\end{cases}
\right\}
(a_0 b_0)^d \prod_{1 \le i \le m,\ 1\le j \le n}(\xi_i - \eta_j).
\end{equation}
For (\ref{BezDef}) and (\ref{BezEq}), see \cite{Cay1857} and \cite[Chapter 12]{GKZ}.
The discriminant $\Delta(F)$ of $F(x)$, which was coined by \cite{Syl1851}, is defined by
\begin{equation}
\Delta(F) = a_0^{2m-2} \prod_{1 \le i < j \le m} (\xi_i - \xi_j)^2. \label{DiscDef}
\end{equation}
Then, we have the following relations between resultant and discriminant.
\begin{align}
{\rm Res}(F(x),F'(x))&= (-1)^{m(m-1)/2} \ a_0\ \Delta(F) \label{ResDisc1} \\
{\rm Res}\left(F'(x),F(x)-\frac{1}{m} x F'(x)\right) &= (-1)^{m(m-1)/2}\ \frac{1}{m}\ \Delta(F) \label{ResDisc2} \\
\Delta(FG) &= \Delta(F)\ \Delta(G)\ {\rm Res}(F,G)^2 \label{DiscRes}
\end{align}
%

%
%
%
\subsection{The matrix $M_d(f(x)^e)$ and its determinant in characteristic $p$}
Let $p$ be a prime.
We take integers $r$, $e$, $d$ such that $r \ge 2$, $e \ge 0$, $0 \le d \le p$.
For a generic polynomial $f(x)=s_0 x^r + s_1 x^{r-1} + \cdots + s_r$ of degree $r$ in characteristic $p$, we put $f(x)^e = \sum_{i \ge 0} c_i x^i$ and 
define a matrix $M_d(f(x)^e)$ by
\begin{equation}\label{defMd}
M_d(f(x)^e) = 
\begin{pmatrix}
c_{p-d}  & \cdots & c_{p-2}  & c_{p-1} \\
c_{2p-d} & \cdots & c_{2p-2} & c_{2p-1} \\
\vdots & \reflectbox{$\ddots$} & \vdots & \vdots \\
c_{dp-d} & \cdots & c_{dp-2} & c_{dp-1}
\end{pmatrix}.
\end{equation}
The purpose of this paper is to study the relation between the determinant $\det M_d(f(x)^e)$ of $M_d(f(x)^e)$ and the discriminant $\Delta(f(x))$ of $f(x)$. 
%
%
\subsection{The condition under which $\det M_d(f(x)^e)$ is a power of $\Delta(f(x))$}
Let $p$ be a prime.
We assume that $f(x)$ is a generic monic polynomial of degree $r$ in characteristic $p$.
\begin{equation*}
f(x) = x^r + s_1 x^{r-1} + \cdots + s_r = (x-x_1)\cdots(x-x_r)
\end{equation*}
We put $\delta(x_1,\ldots,x_r) = \prod_{1 \le i < j \le r}(x_i - x_j)$.
We remark that $\delta(x_1,\ldots,x_r)$ is symmetric in $x_1$, \ldots, $x_r$ if and only if $p=2$.

We shall be interested in the following set given for a prime $p$.
\[
{\mathscr A}(p) = \left\{
(r,e,d)\in \Z^3 \  \left| 
\begin{array}{l}
r \ge 2,\ e \ge 1,\ 1 \le d \le p,\\ 
\exists\; \varepsilon_{p,r,e,d}\in\F_p^{\times},\ \exists\; \mbox{positive integer } g,\\ 
\det M_d(f(x)^e) = \varepsilon_{p,r,e,d}\ \delta(x_1,\ldots,x_r)^g
\end{array}
\right.
\right\}
\]
For $(r,e,d)\in{\mathscr A}(p)$, counting the degrees in $x_1$, \ldots, $x_r$ of $\det M_d(f(x)^e)$ and $\delta(x_1,\ldots,x_r)$, we have 
\begin{equation}\label{gdef}
g=\left.\left\{ red -\frac{d(d+1)}{2}(p-1)\right\} \right/ \frac{r(r-1)}{2}.
\end{equation}

We compare the set ${\mathscr A}(p)$ with the following set ${\mathscr B}(p)$.
\begin{align*}
{\mathscr B}_{+}(p) &= \left\{
(r,e,d)\in\Z^3\ |\ 2 \le r \le p,\ e=p-1,\ d=r
\right\} \\
{\mathscr B}_0(p) &= \left\{
(r,e,d)\in\Z^3\ \left|\ 
2 \le r \le p+1,\ 
(p-1)/2 < e \le p-1,\ 
r(p-1-e) \le p-1,\ 
d=r-1
\right.
\right\}\\
{\mathscr B}_{-}(p) &= \left\{
(r,e,d)\in\Z^3\ |\ 2 \le r,\ (p-1)/2 < e \le p-1,\ r(p-1-e)=p-1,\ d=r-2 
\right\} \\
{\mathscr B}(p) &= {\mathscr B}_{+}(p) \cup {\mathscr B}_{0}(p) \cup {\mathscr B}_{-}(p)
\end{align*}
%
%
We remark that if $(r,e,d)\in{\mathscr B}_{+}(p)$ [resp. $(r,e,d)\in{\mathscr B}_{-}(p)$], we have $(r,e,d-1)\in{\mathscr B}_0(p)$ [resp. $(r,e,d+1)\in{\mathscr B}_0(p)$].
In Section 2, we shall prove the following
%
%
\begin{thrm}
For any prime $p$, we have
\[{\mathscr A}(p)\supset{\mathscr B}(p).\]
Moreover, for $(r,e,d)\in{\mathscr B}(p)$, we have:
\begin{align}
&\varepsilon_{p,r,e,d} = 
\begin{cases}
(-1)^{r-1} \,\varepsilon_{p,r,e,d-1} & (when\ (r,e,d)\in{\mathscr B}_{+}(p) \\
(-1)^{\frac{(r+1)(r+2)}{2}+ \frac{r(r+1)}{2} e}\,\{ r(p-1-e)\}!\, e!^r & (when\ (r,e,d)\in{\mathscr B}_0(p))\\
(-1)^r \,\varepsilon_{p,r,e,d+1} & (when\ (r,e,d)\in{\mathscr B}_{-}(p) )
\end{cases}
\\
&g= 2e-(p-1)
\end{align}
\end{thrm}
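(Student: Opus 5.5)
The plan is to prove divisibility of $\det M_d(f(x)^e)$ by $\Delta(f)^{g/2}$ by a local computation at a double root, and then pin down the quotient by a degree count and one explicit specialization. Throughout I assume $p$ odd; the case $p=2$ must be checked separately, and there the exponent $g=2e-1$ is odd, which is consistent with the remark that $\delta$ is symmetric only when $p=2$. For $p$ odd, $g=2e-(p-1)$ is even, so $\delta^g=\pm\Delta(f)^{g/2}$. Since $\det M_d(f(x)^e)$ is a polynomial in $s_1,\dots,s_r$, it is symmetric in the $x_i$. It therefore suffices to show that $\Delta^{g/2}$ divides it and that the quotient is a constant in $\F_p^\times$.

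\emph{Step 1 (degree count).} Assume divisibility of $\det M_d(f(x)^e)$ by $(x_1-x_2)^g$, hence by $\delta^g$. Comparing total degrees exactly as in (\ref{gdef}) shows the quotient has degree $0$. The value $g=2e-(p-1)$ follows from (\ref{gdef}) separately in each of the cases $d=r,r-1,r-2$, using the defining constraints of ${\mathscr B}_{\pm}(p)$ and ${\mathscr B}_0(p)$.

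\emph{Step 2 (translation invariance).} For $d\le p$, replacing $f(x)$ by $f(x+a)$ changes $M_d$ by left and right multiplication by unipotent triangular matrices. This uses $(x+a)^{ip+j}=(x^p+a^p)^i(x+a)^j$ and Lucas' theorem, so $\det M_d$ is invariant. This reduces the divisibility question to a double root at $0$.

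\emph{Step 3 (vanishing order at a double root).} Write $f=(x^2-t)u(x)$ with $u$ generic monic of degree $r-2$. The roots $\pm\sqrt t$ then contribute $(x_1-x_2)^g=(4t)^{g/2}$, so I need $t^{\,e-(p-1)/2}\mid\det M_d$. Expand
\[
f^e=\sum_k \binom{e}{k}(-t)^k x^{2(e-k)}u^e .
\]
The terms with small $k$ carry a high power $x^{2(e-k)}$. Multiplying by $x^{2(e-k)}$ shifts the index window $ip+j-d-1$, and when $2(e-k)\ge p$, multiplication by $x^p$ shifts rows. I will organize the columns of $M_d$ into $t$-adic filtration pieces and show, by multilinearity in the rows, that every term of order $<e-(p-1)/2$ has linearly dependent columns. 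The hypotheses $e>(p-1)/2$ and $r(p-1-e)\le p-1$ should be exactly what is needed for this rank-drop count. I expect this to be the main obstacle: getting the exact order of vanishing rather than just some vanishing.

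\emph{Step 4 (constant $\varepsilon$).} To get $\varepsilon$ for ${\mathscr B}_0(p)$, I will specialize to a polynomial with explicit discriminant such as $f=x^r-1$, or else extract the extreme monomial in $s_r$ (or $s_1$). In that specialization $M_{r-1}$ becomes monomial and its entries are multinomial coefficients. Modulo $p$ these produce $\{r(p-1-e)\}!\,e!^r$ up to the stated sign; the sign comes from the permutation and from $\Delta(x^r-1)$. For ${\mathscr B}_{+}(p)$ and ${\mathscr B}_{-}(p)$ I will expand along the extra or missing boundary row and column of $M_d$ versus $M_{d\pm1}$. The entry $c_{dp-1}$ or $c_{p-d}$ degenerates there because of the constraint $e=p-1$ (respectively $r(p-1-e)=p-1$), which gives the ratios $(-1)^{r-1}$ and $(-1)^r$.
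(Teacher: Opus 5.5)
The gap is in Step 3. That is the only place where the divisibility $\delta(x_1,\ldots,x_r)^{2e-(p-1)}\mid\det M_d(f(x)^e)$, the core of the theorem, is supposed to be proved, and it is left unproved. You describe a $t$-adic filtration and a multilinear ``rank-drop count'', guess that it is driven by $r(p-1-e)\le p-1$, and flag it as the main obstacle. You also only need a lower bound on the order of vanishing; exactness comes from the degree count and $\varepsilon\ne0$.

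The missing observation is one line, and you nearly state it. The first row of $M_d$ consists of the coefficients of $x^{p-d},\ldots,x^{p-1}$. The term $\binom{e}{k}(-t)^k x^{2(e-k)}u^e$ reaches degrees $\le p-1$ only if $2(e-k)\le p-1$, i.e.\ $k\ge e-(p-1)/2$. So the whole first row, and hence $\det M_d$, is divisible by $t^{e-(p-1)/2}$, with no multilinear expansion needed. If you instead translate to $x_1=0$ and write $f=x(x-w)u$, the same observation gives divisibility by $w^{2e-(p-1)}$ directly, and this also covers $p=2$.

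Combined with symmetry and Step 2, this gives divisibility by $\delta^{2e-(p-1)}$ on all of ${\mathscr D}(p)$, including $d=r$. So $r(p-1-e)\le p-1$ plays no role in the divisibility; it enters only through the degree count. One correction to Step 2: it is false for arbitrary $d\le p$. Translation acts on $M_d$ by unitriangular matrices only when $re\le(d+1)(p-1)$; otherwise coefficients $c_{kp+j}$ with $k\ge d$ leak into the window. This bound does hold on ${\mathscr B}(p)$.

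With that fix, your route to the divisibility is correct and genuinely different from the paper's. The paper writes $f^e=f^p\cdot f^{-(p-e)}$ and applies $(d/dx)^{p-d}$. It then uses the partial fractions of $f^{-(p-e)}$ to show that row $i$ of a Vandermonde-sandwiched form of $M_d$ is divisible by $\prod_{\ell\ne i}(x_i-x_\ell)^{2e-(p-1)}$ (Lemmas~\ref{lemma1} and~\ref{lemma2}), and it handles $d=r$ separately via the boundary relation. Your translation-plus-first-row argument is shorter and uniform in $d$; the paper's gives a finer row-wise local statement and needs no translation invariance.

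Step 4 matches the paper in outline but is thinner than it looks:
\begin{itemize}
\item In $M_{r-1}((x^r-1)^e)$, the nonzero entry of row $i$ sits in column $\sigma(i)\equiv -pi \pmod r$, so the sign is the Zolotarev symbol $\bigl[\frac{-p}{r}\bigr]$. This sign must be reconciled, through a reciprocity relation such as (\ref{recipro3}), with the factor $\bigl(\frac{r}{p}\bigr)^r$ that appears when $\Delta(x^r-1)^{g/2}$ is compared with the $r^{-nr}$ in $\prod_i\binom{e}{m_i}$. Your phrase ``up to the stated sign'' skips exactly this step.
\item The specialization $x^r-1$ gives nothing for $r=p$, since its discriminant vanishes; the paper uses $x^p-x-1$ there.
\item For ${\mathscr B}_{\pm}(p)$, the relevant degeneration is not in $c_{dp-1}$ or $c_{p-d}$. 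When $re=d(p-1)$, the bottom row of $M_d$ is $(1,0,\ldots,0)$ because $c_{dp-d}=c_{re}=1$. Deleting that row and the first column leaves $M_{d-1}$, which gives $\det M_d=(-1)^{d-1}\det M_{d-1}$.
\end{itemize}
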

We encountered the assertion of Theorem 1 for the special case $e=p-2$ in \cite[Lemma 1]{Ku}, which was the motivation of the study in this paper.
Let $C$ be a non-singular projective curve over $\F_{p^2}$ of genus greater than $1$.
By
\cite{Ihara},
there is a one to one correspondence between 
certain deformations of the congruence relation of $C$
and
differential forms $\omega$ of order $p-1$ on $C$ with certain properties, including the property that 
$\omega^{1/(p-1)}$ is an eigenform of the Cartier operator with eigenvalue $1$ or $0$.
Here $\omega^{1/(p-1)}$ is a $1$-form on a finite separable extension of $C$.
Then, the assertion of Theorem 1 for the special case $e=p-2$ was applied to show that, when the genus of $C$ is $2$ and $p \ne 2$, the differential form $\omega$ is
invariant under the action of the hyperelliptic involution of $C$.
In Section 3, we shall prove the following three Theorems.
%
%
\begin{thrm}
For any prime $p$, we have
\[
\left\{ (r,e,d) \in {\mathscr A}(p) \left| \begin{array}{l} r=2 \mbox{\rm\ or\ }\\ r \nmid p-1 \end{array}\right.\right\} =
\left\{ (r,e,d) \in {\mathscr B}(p) \left| \begin{array}{l} r=2 \mbox{\rm\ or\ }\\ r \nmid p-1 \end{array}\right.\right\}.
\]
Here, $r \nmid p-1$ means that $r$ is not a divisor of $p-1$.
\end{thrm}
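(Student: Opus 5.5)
By Theorem 1 we already have ${\mathscr B}(p)\subset{\mathscr A}(p)$, so only the inclusion $\subset$ is needed. Fix $(r,e,d)\in{\mathscr A}(p)$ with $r=2$ or $r\nmid p-1$. The plan is to extract necessary conditions on $(r,e,d)$ in two ways. First, use the degree identity (\ref{gdef}), which forces $g$ to be a positive integer. Second, specialize $f$ to explicit monic polynomials over $\overline{\F}_p$. If $f$ has distinct roots, then $\det M_d(f(x)^e)$ must be \emph{nonzero}, since it equals $\varepsilon_{p,r,e,d}\,\delta^g$ with $\varepsilon_{p,r,e,d}\ne0$. If $f$ has a repeated root, the determinant must vanish, since $g\ge1$. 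Each specialization yields a sparse matrix whose determinant can be read off combinatorially.

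The main specialization is $f(x)=x^r-a$ with $a\ne0$, assuming $p\nmid r$; then $f$ is separable. Here $f(x)^e=\sum_k\binom{e}{k}(-a)^{e-k}x^{rk}$, so $c_m\ne0$ only if $r\mid m$, $0\le m\le re$ and $\binom{e}{m/r}\not\equiv0 \pmod p$. The $(i,j)$ entry is $c_{ip+j-d-1}$, and $ip+j-d-1\equiv i(p-1)+(i+j-d-1)\pmod r$. The determinant is then a signed sum over permutations $\sigma$ satisfying $ip+\sigma(i)-d-1\in r\Z\cap[0,re]$ for all $i$.

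Write $q\equiv p\pmod r$. If $r\nmid p-1$, then $q\ne1$. The congruences then say that $\sigma(i)\equiv d+1-iq \pmod r$ must be a permutation of $\{1,\dots,d\}$. This forces $d\le r$, and for $d=r$ it forces $\gcd(q,r)$-type conditions. Combining this with the range condition $ip+\sigma(i)-d-1\le re$ for $i=d$, i.e. roughly $d(p-1)\le re$, and with the integrality and positivity of $g$ in (\ref{gdef}), we aim to cut down to $d\in\{r-2,r-1,r\}$. We then expect the exact numerical constraints $(p-1)/2<e\le p-1$, $r(p-1-e)\le p-1$ (with equality when $d=r-2$), $e=p-1$ when $d=r$, and $r\le p+1$. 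Those that the nonvanishing of the sparse determinant does not yield directly should follow from the vanishing conditions below.

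For the vanishing side we will specialize to $f(x)=x^{r-1}(x-1)$ and $f(x)=x^{r-2}(x-1)^2$ (double root at $0$ or $1$). In these cases $c_m$ is an explicit binomial coefficient, and $\det M_d$ becomes a determinant of binomial coefficients of Vandermonde/Lucas type. Requiring it to vanish rules out the remaining small-$d$ cases, for instance $d\le r-3$, where the degree count still permits a positive $g$. The case $p\mid r$ (possible only when $r\nmid p-1$, e.g.\ $r=p$ or $r=p+\dots$) will be handled by using $f(x)=x^r-x-a$ instead of $x^r-a$, which is separable in that case, with the same sparse-support bookkeeping modulo $r-1$. The case $r=2$ is done by hand: $f=x^2-a$, $p$ odd, $c_m=\binom{e}{m/2}(-a)^{e-m/2}$. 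The determinant is then a $d\times d$ minor of a checkerboard binomial matrix, and together with (\ref{gdef}) (i.e. $g=2ed-\frac{d(d+1)}2(p-1)$) it directly gives $d\in\{1,2\}$ and the listed conditions on $e$.

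The main obstacle I expect is the second step: proving rigorously that the congruence/permutation constraints together with the nonvanishing of the resulting binomial determinant exclude every $d$ outside $\{r-2,r-1,r\}$, together with the exact inequalities on $e$. The sparse determinant can in principle be nonzero for several $\sigma$, with possible cancellation mod $p$. I would first try to show that under $q\ne1$ there is a \emph{unique} admissible $\sigma$, so the determinant is a single product of binomials and its nonvanishing is decided by Lucas' theorem. Only if that fails would I fall back on choosing a different $a$ or a second separable test polynomial.
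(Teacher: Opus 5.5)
There is a genuine gap: nothing in your plan bounds $e$ from above, and ${\mathscr A}(p)$ only requires $e\ge 1$. The degree identity (\ref{gdef}) and the range condition give you only $g>0$, $g\in\Z$ and the \emph{lower} bound $d(p-1)\le re$. Every test you list is blind to the shift $e\mapsto e+p$. Writing $f(x)^e=f(x)^p f(x)^{e-p}$ with $f(x)^p=\sum_k s_k^p x^{p(r-k)}$ gives $M_d(f(x)^e)=T\,M_d(f(x)^{e-p})$, where $T$ is lower triangular with diagonal entries $s_r^p$. For $x^r-a$ (or $x^r-x-a$) with $a\ne 0$, this just multiplies the determinant by the nonzero constant $(-a)^{pd}$. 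For $x^{r-1}(x-1)$ and $x^{r-2}(x-1)^2$ with $r\ge 3$ one has $s_r=0$, so the vanishing you demand is automatic. Concretely, take $p=5$ and the triple $(3,8,2)$. Then $g=12$ is positive and even, $\det M_2((x^3-a)^8)=3a^{12}\ne 0$, and both repeated-root tests give $0$. Yet $\det M_2(f(x)^8)=s_3^{10}\det M_2(f(x)^3)=\varepsilon\,(x_1x_2x_3)^{10}\delta^2$, so $(3,8,2)\notin{\mathscr A}(5)$. More generally, $(r,e+p,r-1)$ survives all your tests whenever $(r,e,r-1)\in{\mathscr B}_0(p)$. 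The same problem hits your $r=2$ case, e.g.\ $(2,2p-1,1)$.

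The missing step is exactly this factorization. If $e\ge p$, then $s_r^{pd}=\pm(x_1\cdots x_r)^{pd}$ divides $\det M_d(f(x)^e)$, which is impossible for $\varepsilon\,\delta^g$; equivalently, add a separable test polynomial with $f(0)=0$. This gives $e\le p-1$, and only then does $d(p-1)\le re$ yield $d\le r$. Your claim that the congruences alone force $d\le r$ is unjustified.

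Once $e\le p-1$ and $2\le r\le p-1$, all entries in a column of $M_d((x^r-1)^e)$ have indices congruent mod $p$. So each column has at most one nonzero entry. The uniqueness of $\sigma$ you hoped for is then automatic, and no cancellation or Lucas analysis is needed. What pins down $d$ is the shape of $\sigma$: $\sigma(i+1)-\sigma(i)\in\{-h,k\}$ with $h=p\bmod r\ge 2$ (because $r\nmid p-1$) and $k=r-h$. By the classification of such periodic permutation matrices (Proposition~\ref{PPMclassification}), any one of size $2\le d\le r$ other than ${\rm I}_d$ and ${\rm J}_d$ has $d\in\{r-1,r\}$. The cases $d=1$ and ${\rm J}_d$ are excluded because $c_{p-1}=0$, and ${\rm I}_d$ forces $r\mid d+1$. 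So the target is $d\in\{r-1,r\}$, not $\{r-2,r-1,r\}$, since ${\mathscr B}_-(p)$ needs $r\mid p-1$. Your repeated-root specializations are unnecessary, and the conditions on $e$ then drop out of $d(p-1)\le re\le r(p-1)$ and $g>0$. The side cases $r>p$ and $p\mid r$ are short direct arguments, but they too start from $e\le p-1$, which is what pins $e=p-1$ there.
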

%
%
%
\begin{thrm}
When $p<200000$, we have ${\mathscr A}(p)={\mathscr B}(p)$.
\end{thrm}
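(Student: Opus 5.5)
Theorem 1 already gives ${\mathscr B}(p)\subset{\mathscr A}(p)$, so what remains is the reverse inclusion ${\mathscr A}(p)\subset{\mathscr B}(p)$ for every prime $p<200000$. By Theorem 2 we only need to handle triples $(r,e,d)\in{\mathscr A}(p)$ with $r\ge 3$ and $r\mid p-1$. The plan is to reduce this to a finite list of candidates for each $p$, and then decide each candidate by a computer check.

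\emph{Step 1: bounding $e$ and $d$.} This step reuses the necessary conditions proved for Theorem 2, which do not use the hypothesis on $r$.
\begin{itemize}
\item Since $g$ in (\ref{gdef}) must be a positive integer, we get $red>\frac{d(d+1)}{2}(p-1)$, i.e.\ $2re>(d+1)(p-1)$, together with the divisibility $\frac{r(r-1)}{2}\mid red-\frac{d(d+1)}{2}(p-1)$.
\item Reduce $e$ modulo $p$. Writing $e=e_0+pe_1$ with $0\le e_0\le p-1$, we have $f(x)^e=f(x)^{e_0}f(x^p)^{e_1}$ in characteristic $p$. I expect the argument of Section 3 to show, via the Cartier-type structure of $M_d$, that $\det M_d(f^e)$ factors as $\det M_d(f^{e_0})$ times a product of $s_i^{p}$-type factors that are not powers of $\delta$. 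This should force $e\le p-1$.
\item Use the specialisation arguments of Section 3: collide roots ($x_1=x_2$), or set $s_r=0$ to make $f$ divisible by $x$. These should give $d\le r$ and $d\ge r-2$, or equivalently lower bounds like $e>(p-1)/2$.
\end{itemize}
After these reductions, for fixed $p$ the candidates are the triples with $r\mid p-1$, $3\le r$, $(p-1)/2<e\le p-1$, $d\in\{r-2,r-1,r\}$ and $d\le p$. Only those satisfying the integrality of $g$ and not already lying in ${\mathscr B}(p)$ need to be examined.

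\emph{Step 2: shrinking the candidate list arithmetically.} The conditions defining ${\mathscr B}_0$ and ${\mathscr B}_-$, namely $r(p-1-e)\le p-1$ and $r(p-1-e)=p-1$, should be compared with the degree identity. I would use the specialisation $f(x)=x^r-1$ (or $x^r+s_r$): because $r\mid p-1$, the matrix $M_d$ becomes monomial or sparse and $\det M_d$ is an explicit product of binomial coefficients modulo $p$. Requiring this to be nonzero, and to match $\varepsilon\,\delta^g$ evaluated at the roots of unity of order $r$, gives Lucas-type conditions on the base-$p$ digits. The hope is that these conditions leave only $O(1)$ or $O(\tau(p-1))$ exceptional triples per $p$.

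\emph{Step 3: deciding the survivors by computation.} For each surviving triple, refute membership in ${\mathscr A}(p)$ with a cheap test. Specialise $f$ to a random polynomial over $\F_p$ (or over a small extension $\F_{p^k}$) with known distinct roots. Then check whether $\det M_d(f^e)$, computed over the finite field, equals $\varepsilon\,\delta^g$ for the single admissible $\varepsilon$, with $\varepsilon$ pinned down from a second specialisation. Alternatively, show that $\det M_d(f^e)$ vanishes at a polynomial with distinct roots. Either outcome excludes the triple. This can be run over all primes below $200000$, e.g.\ in SageMath.

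\emph{Main obstacle.} The main obstacle is Step 2. One needs a theoretical reduction strong enough that the search for $p<200000$ is feasible. Exhaustively enumerating all $e<p$ and all $r\mid p-1$ gives roughly $p\,\tau(p-1)$ determinants of size up to $p$ per prime, which is too expensive. So the key is to extract, from the specialisation at $x^r-1$, a digit or congruence criterion that kills almost all candidates by pure arithmetic, leaving only a handful of genuine determinant evaluations.
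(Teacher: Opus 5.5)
\textbf{There is a genuine gap, and it sits where you place the ``main obstacle''.}

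Several parts of your plan are sound:
\begin{itemize}
\item The framing is correct: Theorem~1 gives $\mathscr{B}(p)\subset\mathscr{A}(p)$, and Theorem~2 leaves only $r\ge 3$ with $r\mid p-1$.
\item Your reduction to $e\le p-1$ is the paper's argument. $M_d(f^e)$ is a lower triangular matrix with diagonal $s_r^p$ times $M_d(f^{e-p})$, and $x_1\cdots x_r\nmid\delta$.
\item You should add that $g$ must be even for $p\ne 2$, because $\delta^g$ must be symmetric. Every later parametrisation uses this.
\end{itemize}

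\emph{Step~1 is unsupported.} The third bullet only asserts that colliding roots or setting $s_r=0$ ``should give'' $r-2\le d\le r$. Nothing you state implies this. Take $(r,e,d)=(10,21,7)$ with $p=31$. It has $r\mid p-1$, $e\le p-1$, $d(p-1)\le re\le (d+1)(p-1)$ (namely $210\le 210\le 240$), $e>(p-1)/2$ and $g=14$, yet $d=r-3$. So $d\ge r-2$ is not ``equivalent'' to $e>(p-1)/2$ either.

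\emph{Step~2 cannot remove such triples.} When $r\mid p-1$, write $rs=p-1$. Then $M_d((x^r-1)^e)$ is anti-diagonal with entries $\pm\binom{e}{is}$. These are nonzero mod $p$ for every triple in $\mathscr{U}(p)$, since $ds\le e\le p-1$ (Lemma~\ref{toRho1}). Hence there are no Lucas-type digit conditions. ``Matching $\varepsilon\,\delta^g$'' at one specialization is vacuous, because $\varepsilon$ is unknown; $x^r-1$ can only fix $\varepsilon$. You are left with neither a justified candidate list nor a feasible search.

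\textbf{The missing idea is the specialization $f=x^r-x$, combined with the classification of periodic permutation matrices.}
\begin{itemize}
\item The exponents $e+m(r-1)$, $0\le m\le e$, of $(x^r-x)^e$ are pairwise distinct mod $p$. So $M_d((x^r-x)^e)$ has at most one nonzero entry per column.
\item Since $\Delta(x^r-x)\ne 0$, its determinant must be nonzero. It is therefore a monomial matrix whose permutation satisfies $\sigma(i+1)-\sigma(i)\equiv -p \pmod{r-1}$.
\item Proposition~\ref{PPMclassification} then leaves four explicit families (Lemmas~\ref{C1C2C3} and~\ref{Updeqrm2C4}): $d=1$; $\widetilde{M}={\rm J}_d$, which forces $r(r-1)\mid p-1$; $\widetilde{M}={\rm I}_d$, which forces $r-1\mid p+1$; and $r-2\le d\le r$. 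Where needed, each has a closed-form determinant, a signed product of binomial coefficients. The cases $d\in\{r-1,r\}$ are already in $\mathscr{B}(p)$ by Lemma~\ref{UinB1}.
\item Your example dies here: row~$3$ of $M_7((x^{10}-x)^{21})$ is zero.
\end{itemize}

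Only the last family has $d\ge r-2$. The other three must genuinely be processed. Some of their members, e.g.\ $(4,12,1)$ for $p=13$ and $(14,29,1)$ for $p=43$, even pass the comparison of $\det M_d(f^e)/\Delta(f)^{g/2}$ at $x^r-1$ and $x^r-x$ (Table~\ref{TableKappa}).

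Thanks to the closed forms, that comparison is pure arithmetic mod $p$. It leaves at most $1170$ triples per prime. These are then eliminated by actual determinant computations at $x^r+x^k+1$, $x^r+x^k+x$ and $x^r+ax+b$. Your Step~3 corresponds to this final stage, but it becomes a proof only after the $x^r-x$ reduction has produced a complete and small candidate list.
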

%
%
%
\begin{thrm}
When $p=2\pi+1$ with a prime $\pi$, we have ${\mathscr A}(p)={\mathscr B}(p)$.
\end{thrm}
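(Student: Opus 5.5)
Theorem 1 gives ${\mathscr A}(p)\supset{\mathscr B}(p)$, so we only need the reverse inclusion ${\mathscr A}(p)\subset{\mathscr B}(p)$ when $p=2\pi+1$. The plan is to use Theorem 2 for the easy values of $r$ and then handle the few remaining $r$ by hand.

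\textbf{Reduction to few cases.} By Theorem 2 it suffices to treat triples $(r,e,d)\in{\mathscr A}(p)$ with $r\ge 3$ and $r\mid p-1=2\pi$. The divisors of $2\pi$ that are at least $3$ are $\pi$ and $2\pi$, with the single exception $\pi=2$, $p=5$, where $r=4=2\pi$. If $\pi$ is small enough that $p<200000$, Theorem 3 settles the claim. So the two cases to handle are $r=\pi=(p-1)/2$ and $r=2\pi=p-1$.

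\textbf{Arithmetic constraints.} For $(r,e,d)\in{\mathscr A}(p)$, formula (\ref{gdef}) requires $g$ to be a positive integer:
\[
\tfrac{r(r-1)}{2}\,g=red-\tfrac{d(d+1)}{2}(p-1).
\]
Here $1\le d\le p$, and $e$ is bounded because the degree comparison forces $g\le$ something linear. I would also reuse the necessary conditions extracted in the proof of Theorem 2, since they were presumably derived for general $r$. The $r\nmid p-1$ hypothesis should enter that proof only at one step, probably a congruence $red\equiv\ldots$ modulo $r$ or $r-1$, or a nonvanishing statement.

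\textbf{Substitution and local analysis.} With $r=(p-1)/2$ or $r=p-1$, and since $\pi$ is prime, the divisibility conditions modulo $r(r-1)/2$ become very restrictive: $\gcd(d,\pi)$ and $\gcd(e,\pi)$ must be $1$ or $\pi$. This should leave a short explicit list of candidate $(e,d)$, for example $d\in\{r-2,r-1,r\}$ together with a few exceptional values. For each candidate not in ${\mathscr B}(p)$, I would show that $\det M_d(f^e)$ is not $\varepsilon\,\delta^g$. The approach is to specialize $f$, e.g. $f=x^r-1$ or $f=x^r+tx$, and compare the determinant with $\delta^g$ at a point; alternatively, show that the determinant vanishes identically or fails to vanish on the discriminant locus $x_1=x_2$. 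Taking $f=(x-a)^2h(x)$, the order of vanishing along $x_1=x_2$ must equal $g$, and this can be computed from the coefficients.

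\textbf{Main obstacle.} The hardest step is the last one: excluding the leftover candidates for $r=\pi$ and $r=2\pi$ uniformly in $\pi$. The case $r\mid p-1$ is exactly where the proof of Theorem 2 fails, presumably because $x^r-1$ has a nice Hasse–Witt-type structure that makes the easy test specializations degenerate. I would first try the specialization $f(x)=x^r-1$ (or $x^r-x$). Its roots are $r$-th roots of unity in $\F_p$, and $c_i$ is nonzero only for $r\mid i$, so $M_d$ becomes a monomial-like matrix. This should show that either the determinant vanishes, or the resulting constant is inconsistent with the explicit $\delta^g$ value, computed via (\ref{DiscDef}) as a power of $r^r$ up to sign. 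If that does not exclude a candidate, I would fall back on the order of vanishing along $x_1=x_2$.
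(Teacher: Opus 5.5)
Your reduction is correct and agrees with the paper. Theorems 1 and 2 (with Theorem 3 for small $p$) leave only $r\ge 3$ with $r\mid 2\pi$, i.e.\ $r\in\{\pi,2\pi\}$. After that there is a genuine gap, and it comes in two places.

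\textbf{The candidate list.} The arithmetic constraints do not produce a short list. For $r=\pi$ one gets $g=2d(e-d-1)/(\pi-1)$. So ${\mathscr U}(p)$ only requires $(\pi-1)\mid d(e-d-1)$, $e>d+1$ and $2d\le e\le 2\pi$. This has many solutions because $\pi-1$ is composite: for $p=23$, $d=5$ already allows every even $e$ with $10\le e\le 22$. Your observation that $\gcd(d,\pi),\gcd(e,\pi)\in\{1,\pi\}$ holds for any integers and constrains nothing. The paper obtains a short list from a \emph{nonvanishing} test with $f=x^r-x$. Since $1\le r-1<p$, the exponents $e+m(r-1)$ for $0\le m\le e$ are distinct mod $p$, so $M_d((x^r-x)^e)$ is monomial. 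Then $\det\ne 0$ together with the PPM classification (Proposition \ref{PPMclassification}) forces $(r,e,d)$ into the explicit families ${\mathscr C}_1,\dots,{\mathscr C}_4$. For $p=2\pi+1$ with $\pi>5$, the conditions $r(r-1)\mid p-1$, $r-1\mid p+1$ and $r(r-1)\le p-1$ then remove everything outside ${\mathscr B}(p)$ except three triples: $(2\pi,2\pi,1)$, $(\pi,\pi+1,1)$ and $(\pi,2\pi,1)$. So the real leftovers have $d=1$, not $d\in\{r-2,r-1,r\}$.

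\textbf{The exclusion step.} Your proposed way of excluding leftovers cannot work.
\begin{itemize}
\item Because $\varepsilon_{p,r,e,d}$ is only required to exist, a single specialization refutes membership in ${\mathscr A}(p)$ only if exactly one of $\det M_d(f^e)$ and $\Delta(f)$ vanishes. Comparing with ``the explicit $\delta^g$ value'' gives nothing, since there is no known $\varepsilon$ to contradict.
\item For $f=x^r-1$ with $r\mid p-1$, the matrix is anti-diagonal with entries $\pm\binom{e}{is}$. Its determinant is nonzero on all of ${\mathscr U}(p)$ (Lemma \ref{toRho1}), and $\Delta(x^r-1)\ne 0$. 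So this specialization never excludes anything, which is exactly why the argument of Theorem 2 stops here.
\item The family $x^r+tx$ adds nothing beyond $t=-1$: both sides are isobaric of the same weight, and rescaling $x$ relates all $t\neq 0$.
\end{itemize}
The missing idea is to compare \emph{two} inequivalent specializations. Membership in ${\mathscr A}(p)$ forces $\det M_d(f^e)/\Delta(f)^{g/2}$ to take the same value at $f=x^r-1$ and at $f=x^r-x$. For the three survivors this reads $\kappa(s,\ell)=(-1/(s+1))^{r\ell}$ in $\F_p$, i.e.\ $-\tfrac12=1$, $-\tfrac16=\pm1$ and $\tfrac49=1$. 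These hold only for $p\in\{3,5,7\}$. Your fallback via the order of vanishing along $x_1=x_2$ is not carried out, and nothing in the proposal shows that this order differs from $g$ for the bad triples.
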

The basic idea for proving these three Theorems is to observe the matrix $M_d(f(x)^e)$ with $f(x)$ specialized to $x^r-1$, $x^r-x$ and some other polynomials.
We shall introduce a certain type of permutation matrix, which we call the periodic permutation matrix (PPM) (Definition \ref{PPMdef}),
and give the classification theorem of PPM (Proposition \ref{PPMclassification}), which plays an essential role for proving these three Theorems.
Theorem 3 was verified effectively by using SageMath \cite{SM}.
It is an interesting question whether ${\mathscr A}(p)={\mathscr B}(p)$ holds for any prime $p$.

%
%
\subsection{The formula for ${M_d(f(x)^e)}^{-1}M_d(f(x)^{e+1})$}
Suppose we have $(r,e,d)\in{\mathscr B}_0(p)$ such that $(r,e+1,d)\in{\mathscr B}_0(p)$.
We remark that this condition implies $p \ge 5$ and $r\le p-1$.
Then, we shall prove a formula which describes the matrix ${M_d(f(x)^e)}^{-1}M_d(f(x)^{e+1})$ in Section 4. 

To introduce the following several matrices, we now assume that $x_1$, \ldots, $x_r$ are independent variables over $\Q$.
We put $f(x)= x^r + s_1 x^{r-1} + \cdots + s_r =(x-x_1)\cdots(x-x_r)$ and $\varphi(t)= 1 + s_1 t + \cdots + s_r t^r = (1-x_1 t)\cdots(1-x_r t)$.
For $\lambda\in\C$ and $\ell \ge 0$, we define a polynomial $\beta_{\ell}(\lambda)$ of $s_1$, \ldots, $s_r$ by
\(
\varphi(t)^\lambda = \sum_{\ell\ge 0} \; \beta_{\ell}(\lambda) \; t^{\ell}.
\)
We also put $\zeta_i = \dfrac{n}{rn-(r-i)}$ for $1\le i \le r-1$ with $n$ given by $n=p-1-e$.
We put
\begin{align}
B_r &= \label{matBr}
\begin{pmatrix}
{} & {} & 1 \\
{} & \reflectbox{$\ddots$} & {} \\ 
1  & {} & {}
\end{pmatrix}
\;{\rm Bez}\left(f'(x), f(x) - \frac{1}{r} x f'(x)\right)
, \\ 
Q_r &= \label{matQr}
\begin{pmatrix}
1 & & & & \\
\beta_1(-{1}/{r}) & 1 & & & \\
\beta_2(-{1}/{r}) & \beta_1(-{2}/{r}) & \ddots & & \\
\vdots & \vdots & & 1 & \\ 
\beta_{d-1}(-{1}/{r}) & \beta_{d-2}(-{2}/{r}) & \cdots & \beta_1(-{(r-2)}/{r}) & 1
\end{pmatrix},\\
Z_{r,n} &= \label{matZrn}
\begin{pmatrix}
\zeta_1 & & & \\
& \zeta_2 & & \\
& & \ddots & \\
& & & \zeta_{r-1}
\end{pmatrix}, \\
P_r &= \label{matPr}
\begin{pmatrix}
1 & & & & \\
\beta_1\left(-{(r-2)}/{r}\right) & 1 & & & \\
\vdots & & \ddots & & \\ 
\beta_{d-2}\left(-{2}/{r}\right) & \cdots & \beta_1\left(-{2}/{r}\right) & 1 & \\
\beta_{d-1}\left(-{1}/{r}\right) & \cdots & \beta_2\left(-{1}/{r}\right) & \beta_1\left(-{1}/{r}\right) & 1
\end{pmatrix}.
\end{align}
Here we remark that, by (\ref{ResDisc2}),
\begin{equation}
\det B_r =(-1)^{r(r-1)/2}\ \dfrac{1}{r}\ \Delta(f(x))
\end{equation}
holds.
Then, the matrices $B_r$, $Q_r$, $Z_{r,n}$, $P_r$ can be reduced modulo $p$, and the following formula holds.
\begin{thrm}
For $(r,e,d) \in {\mathscr B}_0(p)$ such that $(r,e+1,d)\in{\mathscr B}_0(p)$, we have
\begin{equation}
{M_d(f(x)^e)}^{-1}M_d(f(x)^{e+1}) =B_r \ Q_r \ Z_{r,n} \ P_r.
\end{equation}
\end{thrm}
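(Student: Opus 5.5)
We prove the equivalent identity $M_d(f(x)^{e+1}) = M_d(f(x)^e)\,B_r\,Q_r\,Z_{r,n}\,P_r$, with $d=r-1$ and $n=p-1-e$. Theorem~1 guarantees that $M_d(f(x)^e)$ is invertible over the field of fractions. The plan is to produce, for each of the $d$ columns of $M_d(f(x)^{e+1})$, an explicit linear combination of columns of $M_d(f(x)^e)$ with coefficients polynomial in $s_1,\dots,s_r$, and then to match the coefficient matrix with the product $B_rQ_rZ_{r,n}P_r$.

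\textbf{Column calculus.} The columns of $M_d(h)$ are the vectors $v_k(h)=(c_{ip-k}(h))_{1\le i\le d}$ for $1\le k\le d$. There are three basic moves.
\begin{itemize}
\item Multiplication by $x^j$ shifts the index: $v_k(x^jh)=v_{k+j}(h)$.
\item Derivatives vanish in the relevant range. Since $(ip-k+1)c_{ip-k+1}(h)$ is the coefficient of $x^{ip-k}$ in $h'$, and this factor is $\equiv 1-k\pmod p$, the coefficients of $x^{ip-1}$ in $h'$ always vanish. More generally, $v_k(x^m h')=(m+1-k)\,v_k(x^{m-1}h)$ up to relabelling.
\item Euler-type relations convert powers. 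Applied to $h=f^{e+1}=f\cdot f^e$, we use $(f^{e+1})'=(e+1)f'f^e$. We also write $f=\bigl(f-\tfrac1r xf'\bigr)+\tfrac1r xf'$, so that $f^{e+1}=g_1f^e+\tfrac1r\,xf'f^e$ with $g_1=f-\tfrac1r xf'$, which has degree $\le r-1$.
\end{itemize}
Every column $v_k(f^{e+1})$ is therefore a combination of $v_\ell(f^e)$ with shifts up to $r-1$. Shifts beyond $d$ produce columns $v_\ell$ with $\ell>d$. These are eliminated by differentiating $x^m f^{e+1}$ and $x^m f'f^e$ and using $e+1\equiv -n \pmod p$. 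This elimination is where the scalars $\zeta_i=n/(rn-(r-i))$ appear: they come from solving $(m+1-k)$-type linear relations combined with the factor $e+1\equiv-n$.

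\textbf{Matching with the factors.} I would organize the elimination using generating functions in $t=1/x$, via the reversed polynomial $\varphi(t)$. The triangular matrices $Q_r$ and $P_r$, with entries $\beta_\ell(-j/r)$, are the change-of-basis matrices between the monomial basis $x^{k}$ and the basis of "fractional powers" $x^{k}\varphi(1/x)^{-j/r}$. In that basis, the derivative relation becomes diagonal with eigenvalues $\zeta_i$, which explains the factor $Z_{r,n}$.

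The Bézout matrix $B_r$ of $f'$ and $g_1$ enters as follows. Combining the two relations for $f^{e+1}$ in terms of $g_1f^e$ and $f'f^e$ gives a bilinear expression $\frac{f'(x)g_1(y)-f'(y)g_1(x)}{x-y}$. Reading off coefficients of $x^{a}y^{b}$ gives exactly $B_r$, up to the antidiagonal flip that reverses the column order.

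Concretely, I would first prove the identity in characteristic $0$ for formal "$f^{\lambda}$" with $\lambda$ an indeterminate. In that setting $M_d$ is replaced by coefficient extraction in a fixed residue window, and the congruence $ip\equiv 0$ is replaced by the statement that the relevant derivative coefficients vanish. Reduction modulo $p$ then happens only at the end, when $\lambda=e$ and $-n\equiv e+1$ are substituted.

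\textbf{Main obstacle.} The hard step is the middle factorization: showing that after eliminating the out-of-range columns, the coefficient matrix is literally $B_rQ_rZ_{r,n}P_r$ and not merely similar to it. My first attempt would be to verify the identity after multiplying both sides on the left by $B_r^{-1}$. I would check it on the "Newton-like" basis $x^{k}\varphi^{-j/r}$ column by column, using the fact that $\frac{d}{dt}\bigl(t^{a}\varphi(t)^{\mu}\bigr)$ has a two-term expansion that is triangular in this basis. If the direct bookkeeping becomes unwieldy, a fallback is to specialize. Both sides are polynomial, and $\Delta\neq0$ generically, so it suffices to check the identity on a Zariski-dense family such as $f=x^r-a x-b$, or on $f$ with distinct roots after a Lagrange-type diagonalization. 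In that setting $M_d(f^e)$ factors through a Vandermonde matrix in the roots, which makes the comparison concrete.
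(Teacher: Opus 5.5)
The first half of your plan is correct and is exactly the paper's reduction. Writing $f^{e+1}=f\cdot f^e$ expresses each column of $M_d(f(x)^{e+1})$ through the $2r-1$ columns $c_{ip-2r+1},\dots,c_{ip-1}$ of $f(x)^e$; this is the paper's $LV=M_d(f(x)^{e+1})$. Applying $[x^N]\,x\Phi'=N\,[x^N]\Phi$ to $\Phi=f^{e+1}$ at $N=ip-k$, $0\le k\le r-1$, with $e+1\equiv-n$, gives the $r$ relations $[x^{ip-k}]\,f^e\{nxf'-kf\}=0$, which is the paper's $LR=0$. Hence $M_d(f(x)^e)^{-1}M_d(f(x)^{e+1})=\begin{pmatrix}-R_2R_1^{-1} & {\rm I}_d\end{pmatrix}V$. (In passing: $v_k(x^mh')=(1-k-m)\,v_k(x^{m-1}h)$, not $(m+1-k)\,v_k(x^{m-1}h)$.)

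What remains is a polynomial matrix identity in $s_1,\dots,s_r,n$, and that identity is the entire content of the theorem. Your proposal only gives heuristics for why $Q_r$, $Z_{r,n}$, $P_r$, $B_r$ should appear, and you leave the ``middle factorization'' open yourself. Three concrete ingredients are missing.

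(1) An explicit inverse of $R_1=n\,S_r(r\varphi-t\varphi')-S_r(\varphi)\,{\rm diag}(r-1,\dots,1,0)$. The paper proves $P(-\frac{2r-1}{r},\dots,-\frac{r}{r})\,R_1\,Q(\frac{r-1}{r},\dots,\frac{0}{r})=nr\,{\rm I}_r-{\rm diag}(r-1,\dots,1,0)$. It reduces both the $n$-part and the $1$-part to coefficients $[t^{i-j}]\,\varphi(t)^{(i-j)/r}\psi(t)$ with $\psi=r-t\varphi'/\varphi$. It then evaluates these as residues via $u=t\varphi(t)^{-1/r}$, for which $\psi\,dt/t=r\,du/u$. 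This is a two-sided factorization with different parameters on each side, not a diagonalization ``with eigenvalues $\zeta_i$'', and nothing in your sketch produces it.

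(2) The disposal of the ${\rm I}_d$ block. The paper reduces $R\,{\rm diag}(rn-(r-j))_j^{-1}$ modulo the ideal $(\zeta_1,\dots,\zeta_{r-1})$ and shows $\begin{pmatrix}-R_2^{um}(R_1^{um})^{-1} & {\rm I}_d\end{pmatrix}V=0$. What is left to prove is $\langle(-R_2+R_2^{um}(R_1^{um})^{-1}R_1)S_r(\varphi)\rangle=n\,B_r$, where $\langle\cdot\rangle$ deletes the last column.

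(3) That final identity. It writes the flipped Bézoutian as a difference of products of triangular Toeplitz matrices plus a rank-one term, and is verified by a two-variable generating-function computation. Your claim that combining the two relations ``gives'' $\frac{f'(x)g_1(y)-f'(y)g_1(x)}{x-y}$ is not a derivation. Your relations are one-variable coefficient identities, and you do not say how a two-variable kernel would come out of them.

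Your fallbacks do not close the gap.
\begin{itemize}
\item The family $x^r-ax-b$ is two-dimensional inside the $r$-dimensional $(s_1,\dots,s_r)$-space. It is not Zariski dense for $r\ge3$, so checking the identity there proves nothing.
\item ``$f$ with distinct roots'' is just the generic case. The Vandermonde sandwich of Lemma~2 has entries $\{(d/dx)^{p-d}f(x)^e\}[x_i^p,x_j]$ given by partial-fraction sums, so it does not make the comparison concrete.
\item The ``characteristic $0$, $f^\lambda$'' reformulation is ill-posed. The window relations exist only because $ip\equiv0\pmod p$, and they have no characteristic-$0$ analogue for $M_d(f^\lambda)$. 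What can legitimately be lifted to characteristic $0$, with $n$ an indeterminate, is only the identity in $R$ and $V$, after $M_d(f(x)^e)$ has been eliminated.
\end{itemize}

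Finally, you never use the hypothesis $(r,e+1,d)\in{\mathscr B}_0(p)$. It gives $1\le n$ and $rn\le p-1$, so $r$ and all $rn-(r-i)$, $1\le i\le r$, are units mod $p$. This is exactly what makes $\det R_1=\prod_{i=1}^r(rn-(r-i))$ nonzero in characteristic $p$, so that the elimination step is legitimate. It is also what lets $1/r$, the $\zeta_i$ and the $\beta_\ell(-j/r)$ be reduced mod $p$.
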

It seems interesting that the B\'ezout matrix of $f'(x)$ and $f(x)-\frac{1}{r} x f'(x)$ appears in this formula.
An early step of finding and proving Theorem 5 was that, for fixed $r$ and $n$ and for running $p$, we computed the matrix ${M_d(f(x)^e)}^{-1}M_d(f(x)^{e+1})$
as a matrix with components in $\Q[s_1,\ldots,s_r]$.
To do this, 
Wang's algorithm for the rational reconstruction was effective
(cf. \cite{Wang}, \cite{WGD}, \cite{Monagan2004}).
We shall prove Theorem 5 in Section 4.
%
%
%
%
\subsection{Two similar experimental equalities}
In Section 5, we shall give two similar experimental equalities.
The first one is related to a generalization of Theorem 1.
The second one is related to a theorem of Glynn (\cite{Glynn}, Theorem 4.1).
These two equalities remain unproven.
%
%
\subsection{Acknowledgments}
Throughout the study of this paper, a lot of observations involving polynomial, discriminant, matrix and determinant were necessary.
Those observations hardly accepted manual computations.
SageMath \cite{SM} enabled us to make those observations through efficient computation.
We would like to express our gratitude to the SageMath community for their continuous development and support.
%
%
%
%
%
\section{Proof of ${\mathscr A}(p)\supset{\mathscr B}(p)$}
%
%
\subsection{The divisibility of $\det M_d(f(x)^e)$ by power of $\delta(x_1,\ldots,x_r)$}
In general, for a field $K$ of characteristic $p$ and a variable $x$ over $K$, we denote the rational function fields over $K$ of variables $x$ and $x^p$ by $K(x)$ and $K(x^p)$ respectively.
Then, we have
\[
K(x) = \bigoplus_{0 \le i \le p-1} K(x^p) x^{i}.
\]
Suppose we have $F(x) \in K(x)$ and $F_i(x^p) \in K(x^p)$ ($0 \le i \le p-1$) such that
\[
F(x) = \sum_{0 \le i \le p-1} F_i(x^p) x^i,
\]
then we define 
\[
F[t,x] = \sum_{0 \le i \le p-1} F_i(t) x^i,
\]
with another variable $t$.
%
%
\begin{lemma}\label{lemma1}
We put $f(x)=(x-x_1) \cdots (x-x_r) \in \F[x_1,\ldots,x_r][x]$.
For integers $r$, $e$ and $d$, we assume
\begin{equation}\label{theConditionOne}
r \ge 2,\ \ \ \frac{p-1}{2} < e \le p-1,\ \ \ 1 \le d \leq p.
\end{equation}
Then, for $1\le i \le r$ and $1\le j \le r$,
$\left\{\left( \frac{d}{dx} \right)^{p-d} f(x)^e\right\}[x_i^p,x_j]$
is divisible by $\prod_{1\le \ell \le r,\; \ell\ne i}(x_i - x_{\ell})^{2e-(p-1)}$ in $\F_p[x_1, \ldots, x_r]$.
\end{lemma}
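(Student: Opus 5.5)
The plan is to pull out a $p$-th power so that the $K(x^p)$-linearity of $\frac{d}{dx}$ reduces everything to a partial-fraction computation. Put $n'=p-e$. The hypothesis $\frac{p-1}{2}<e\le p-1$ gives $1\le n'\le \frac{p-1}{2}$. Work in $K(x)$ with $K=\F_p(x_1,\ldots,x_r)$ and write
\[
f(x)^e = f(x)^p\, f(x)^{-n'} = F(x^p)\, f(x)^{-n'}, \qquad F(t)=\prod_{1\le \ell\le r}(t-x_\ell^p).
\]
Since $\frac{d}{dx}$ is $K(x^p)$-linear, we get $G(x):=\left(\frac{d}{dx}\right)^{p-d}f(x)^e = F(x^p)\,H(x)$ with $H=\left(\frac{d}{dx}\right)^{p-d} f(x)^{-n'}$.

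\emph{Step 1: partial fractions for $f^{-n'}$.} Expand
\[
f(x)^{-n'}=\sum_{\ell=1}^r\sum_{k=1}^{n'} a_{\ell,k}\,(x-x_\ell)^{-k},
\]
where $a_{\ell,k}$ is a polynomial in the $x$'s divided by $\prod_{m\ne \ell}(x_\ell-x_m)^{2n'-k}$. This denominator bound follows from the standard formula $a_{\ell,k}=\frac{1}{(n'-k)!}\bigl(\frac{d}{dx}\bigr)^{n'-k}\prod_{m\ne\ell}(x-x_m)^{-n'}\big|_{x=x_\ell}$. The factorial is invertible because $n'-k<p$.

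\emph{Step 2: differentiate and kill the large poles.} We have $\left(\frac{d}{dx}\right)^{p-d}(x-x_\ell)^{-k}=c_k\,(x-x_\ell)^{-(k+p-d)}$ with $c_k=\prod_{j=0}^{p-d-1}(-k-j)$. If $k\ge d+1$, the range $k,\ldots,k+p-d-1$ contains a multiple of $p$, so $c_k=0$. Hence only terms with $m:=k+p-d\le p$ survive. For these I rewrite
\[
(x-x_\ell)^{-m}=\frac{(x-x_\ell)^{p-m}}{x^p-x_\ell^p}.
\]
Consequently
\[
G(x)=\sum_{\ell}\frac{F(x^p)}{x^p-x_\ell^p}\sum_{k\le \min(d,n')} c_k\,a_{\ell,k}\,(x-x_\ell)^{d-k}.
\]
Here each $\frac{F(x^p)}{x^p-x_\ell^p}=\prod_{m\ne \ell}(x^p-x_m^p)$ lies in $K[x^p]$, and each inner sum is a polynomial in $x$ of degree $\le d-1\le p-1$. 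So this expression is exactly the decomposition $\sum_i G_i(x^p)x^i$, and
\[
G[t,x]=\sum_\ell \prod_{m\ne\ell}(t-x_m^p)\,\sum_k c_k a_{\ell,k}(x-x_\ell)^{d-k}.
\]

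\emph{Step 3: specialize $t=x_i^p$.} Every summand with $\ell\ne i$ contains the factor $t-x_i^p$, so it vanishes. What remains is
\[
\prod_{m\ne i}(x_i-x_m)^p\,\sum_k c_k a_{i,k}(x_j-x_i)^{d-k}.
\]
By Step 1, the denominator of each $a_{i,k}$ divides $\prod_{m\ne i}(x_i-x_m)^{2n'-1}$. So the whole expression equals $\prod_{m\ne i}(x_i-x_m)^{p-2n'+1}$ times an element of $\F_p[x_1,\ldots,x_r]$, and $p-2n'+1=2e-(p-1)$, which is the claimed exponent.

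The main obstacle is the bookkeeping in Step 2. The vanishing of $c_k$ for $k>d$ is what guarantees each pole has order $\le p$, hence only a single factor $x^p-x_\ell^p$ in the denominator. Without that, $F$ could not absorb it and the $\ell\ne i$ terms would not vanish at $t=x_i^p$. A secondary check is that $G[t,x]$ is indeed a polynomial, so divisibility in $\F_p[x_1,\ldots,x_r]$ follows from the displayed factorization together with unique factorization. I would also double-check the degenerate case $d=p$, where nothing is differentiated and the argument still applies.
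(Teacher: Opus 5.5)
Your proposal is correct and follows essentially the same route as the paper. Both arguments take the partial fractions of $f(x)^{-(p-e)}$ with the denominator bound $\prod_{m\ne \ell}(x_\ell-x_m)^{2(p-e)-k}$, use the vanishing of $\prod_{0\le n\le p-d-1}(k+n)$ in $\F_p$ for $k>d$, absorb the $p$-th power $\prod_{m\ne\ell}(x^p-x_m^p)$, and specialize $t=x_i^p$ to kill the terms with $\ell\ne i$. The only cosmetic difference is that you obtain the denominator bound from the Taylor-coefficient formula for $a_{\ell,k}$, whereas the paper writes $A_{m,k}$ out as an explicit multinomial sum.
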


\begin{proof}
Let
\[
\frac{1}{\{(x-x_1)\cdots(x-x_r)\}^{p-e}}=\sum_{1\le m \le r} \sum_{1\le k \le p-e} \frac{A_{m,k}}{(x-x_m)^k}
\]
be the partial fraction decomposition.
Then, explicitly we have
\[
A_{m,k}=
\sum_{\substack{h_1,\; \ldots,\; h_{m-1},\; h_{m+1},\;\ldots,\; h_r\ge 0, \\ h_1 + \cdots + h_{m-1} + h_{m+1} + \cdots + h_r = p-e-k}}
\frac
{\prod_{1\le\ell\le r,\; \ell\ne m}\binom{-(p-e)}{h_{\ell}}}
{\prod_{1\le\ell\le r,\; \ell\ne m}(x_m - x_{\ell})^{p-e+h_{\ell}}},
\]
where 
$\binom{-(p-e)}{h_{\ell}}$ denotes the binomial coefficient.
By this, we obtain 
\[
A_{m,k} \left\{ \prod_{1\le\ell\le r,\; \ell\ne m}(x_m-x_{\ell}) \right\}^{2(p-e)-k} \in \F_p[x_1,\ldots,x_r].
\]
Differentiating
\[
f(x)^e
=\{ (x-x_1)\cdots(x-x_r) \}^p \sum_{1\le m \le r} \sum_{1\le k \le p-e} \frac{A_{m,k}}{(x-x_m)^k}
\]
$p-d$ times, we obtain
\begin{equation*}\begin{split}
\left( \frac{d}{dx} \right)^{p-d} f(x)^e 
=
& \sum_{1\le m \le r}\left[ \left\{ \prod_{1\le\ell\le r,\;\ell\ne m}(x-x_{\ell}) \right\}^p\right. 
\\ 
\times & \sum_{1\le k\le p-e} (-1)^{p-d}
\left.\left\{ \prod_{0\le n \le p-d-1}(k+n)\right\} A_{m,k} (x-x_m)^{d-k}\right].
\end{split}\end{equation*}
We remark that $d-k \le d-1 \le p-1$. 
We also remark that when $d-k<0$, we have $k \le p-e < p \le k+p-d-1$.
This implies $\prod_{0\leq n \leq p-d-1}(k+n)=0$ in $\F_p$.
Therefore, we obtain 
\begin{equation*}\begin{split}
\left\{\left( \frac{d}{dx} \right)^{p-d} f(x)^e\right\}[x_i^p,x_j]
=
& \left\{ \prod_{1\le\ell\le r,\; \ell\ne i}(x_i-x_{\ell}) \right\}^p
\\
\times & \sum_{1\le k\le p-e,\ k \le d} (-1)^{p-d}
\left\{ \prod_{0\le n \le p-d-1}(k+n)\right\} A_{i,k} (x_j-x_i)^{d-k}.
\end{split}\end{equation*}
Since
\[
p-\{2(p-e)-k\} = 2e-p+k \ge 2e -p+1,
\]
we obtain the assertion.
\end{proof}
%
%
We put
\begin{equation}
{\mathscr D}(p) = \left\{ (r,e,d) \in \Z^3  \left| 
\begin{array}{l}
r \ge 2, \ \frac{p-1}{2} < e \le p-1, \ 1 \le d \le p,\\
d(p-1) \le re \le (d+1)(p-1)
\end{array}
\right. \right\}.
\end{equation}
We remark that if $d(p-1)\le re$ does not hold, we have $ \det M_d(f(x)^e) =0$.
We also remark that if $re \le (d+1)(p-1)$ holds, we have
\begin{equation}
\left( \frac{d}{dx} \right)^{p-d} f(x)^e= 
\begin{pmatrix}
1 & x^p & \ldots & x^{(d-1)p}
\end{pmatrix}
\widetilde{M_d(f(x)^e)}
\begin{pmatrix}
1 \\
x \\
\vdots \\
x^{d-1}
\end{pmatrix},
\end{equation}
where
\begin{equation*}
\widetilde{M_d(f(x)^e)} = M_d(f(x)^e)
\begin{pmatrix}
{}_{p-d}{\rm P}_{p-d} & {}     & {}                    & {} \\
{}                    & \ddots & {}                    & {} \\
{}                    & {}     & {}_{p-2}{\rm P}_{p-d} & {} \\
{}                    & {}     & {}                    & {}_{p-1}{\rm P}_{p-d}
\end{pmatrix}
\end{equation*}
with ${}_m{\rm P}_{\ell} = m (m-1) \cdots (m-\ell+1)$.
For $(r,e,d)\in{\mathscr D}(p)$, it is immediate to see the followings.
\begin{align}
& r \le 2(p-1) \ \ (p\ne 2) \label{Dprop1} \\
& re \le p^2-1 \label{Dprop2} \\
& (r-d-1)(p-1) \le r(p-1-e) \le (r-d)(p-1) \label{Dprop3} \\
& 0 \le r-d \le \max(p-2,1) \label{Dprop4}
\end{align}
As for (\ref{Dprop4}), when $p \ge 3$, we have
\begin{align*}
e\{p-2-(r-d)\} 
&= e(p-2+d)-re \\
&\ge \frac{p+1}{2} (p-2+d) - (d+1)(p-1)\\
&= \frac{1}{2} (p-3)(p-d) \\
&\ge 0.
\end{align*}
For $0 \le m \le \max(p-2,1)$, we put
\begin{align}
D_m(p) &= \left\{ (r,e)\in\Z^2 \left| 
\begin{array}{l}
r \ge 2,\ \frac{p-1}{2} < e \le p-1,\ r \le p+m, \\
(m-1)(p-1) \le r(p-1-e) \le m(p-1)
\end{array}
\right.\right\},
\\
{\mathscr D}_m(p) &= \left\{ (r,e,d)\in \Z^3 \left|\ (r,e)\in D_m(p),\ d=r-m \right.\right\}.
\end{align}
Then, we have 
\begin{equation}
{\mathscr D}(p) = \bigcup_{0 \le m \le \max(p-2,1)} {\mathscr D}_m(p).
\end{equation}
We have the followings.
\begin{align*}
{\mathscr D}_0(p) &= \{ (r,p-1,r) |\ 2 \le r \le p \} \\
{\mathscr D}_{p-2}(p) &= \{ (2(p-1),\frac{p+1}{2},p) \} \ \ (p \ne 2,3) \\
D_0(p) &\subset D_1(p)
\end{align*}
The sets $D_m(p)$ are illustrated as follows.
\begin{center}
\includegraphics{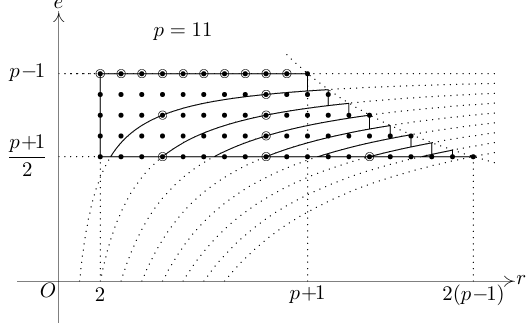}
\captionof{figure}{$D_m(p)$ for $p=11$}
\end{center}
Here, points $(r,e)\in D_m(p)\cap D_{m+1}(p)$ are shown as circled dots. 
For $(r,e)\in D_m(p)\cap D_{m+1}(p)$, we have $re=(r-m)(p-1)$, and
\begin{equation}\label{reMMPlus1}
M_{r-m}(f(x)^e) = \begin{pmatrix}
\ast   & \multicolumn{3}{c}{\multirow{3}{*}{$M_{r-m-1}(f(x)^e)$}} \\
\vdots & {} & {}     & {} \\
\ast   & {} & {}     & {} \\
1      & 0  & \cdots & 0
\end{pmatrix}.
\end{equation}
Hence, we have $\det M_{r-m}(f(x)^e) = (-1)^{r-m-1} \det M_{r-m-1}(f(x)^e)$.
\begin{lemma}\label{lemma2}
For $(r,e,d)\in{\mathscr D}(p)$, $\det M_d(f(x)^e)$ is divisible by $\delta(x_1,\ldots,x_r)^{2e-(p-1)}$.
\end{lemma}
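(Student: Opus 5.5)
We reduce the determinant of $M_d(f(x)^e)$ to a determinant whose rows can be evaluated via Lemma \ref{lemma1}. Since the diagonal matrix of falling factorials ${}_{p-d+j-1}{\rm P}_{p-d}$ ($1\le j\le d$) has nonzero entries in $\F_p$, it suffices to show that $\det \widetilde{M_d(f(x)^e)}$ is divisible by $\delta^{2e-(p-1)}$. For $(r,e,d)\in{\mathscr D}(p)$ we have $re\le (d+1)(p-1)$, so the displayed identity expresses $\left(\frac{d}{dx}\right)^{p-d}f(x)^e$ as the bilinear form $(1,x^p,\dots,x^{(d-1)p})\,\widetilde{M}\,(1,x,\dots,x^{d-1})^T$. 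By the definition of $F[t,x]$, this gives
\[
G(t,x):=\left\{\left(\tfrac{d}{dx}\right)^{p-d}f(x)^e\right\}[t,x]=(1,t,\dots,t^{d-1})\,\widetilde{M}\,(1,x,\dots,x^{d-1})^T .
\]

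Assume first $d\le r$; by (\ref{Dprop4}) this always holds in ${\mathscr D}(p)$. Choose indices $i_1,\dots,i_d$ and $j_1,\dots,j_d$ among $1,\dots,r$, and form the $d\times d$ matrix $\bigl(G(x_{i_a}^p,x_{j_b})\bigr)_{a,b}$. It factors as
\[
V(x_{i_1}^p,\dots,x_{i_d}^p)^T\,\widetilde{M}\,V(x_{j_1},\dots,x_{j_d}),
\]
where $V$ denotes the Vandermonde matrices. Taking determinants,
\[
\det\bigl(G(x_{i_a}^p,x_{j_b})\bigr)=\pm\,\delta(x_{i_1},\dots,x_{i_d})^p\,\delta(x_{j_1},\dots,x_{j_d})\,\det\widetilde{M}.
\]
By Lemma \ref{lemma1}, row $a$ of the left matrix is divisible by $\prod_{\ell\ne i_a}(x_{i_a}-x_\ell)^{2e-(p-1)}$. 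Hence the left determinant is divisible by $\prod_a\prod_{\ell\ne i_a}(x_{i_a}-x_\ell)^{2e-(p-1)}$.

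Now fix a pair $\{u,v\}$ with $u\ne v$; we want $(x_u-x_v)^{2e-(p-1)}\mid\det\widetilde M$. Since $\F_p[x_1,\dots,x_r]$ is a UFD, it suffices to choose the index sets so that $x_u-x_v$ divides the Vandermonde factors $\delta(x_{i})^p\delta(x_{j})$ less often than it divides the product from Lemma \ref{lemma1}. Take $i_1=u$ and $i_2,\dots,i_d$ avoiding $v$; this is possible when $d\le r-1$, and gives $(x_u-x_v)$ multiplicity $0$ on the right-hand Vandermonde side from the $i$'s. Take $j_1,\dots,j_d$ avoiding $v$ as well. Then the factor $(x_u-x_v)^{2e-(p-1)}$ coming from row $a=1$ survives and divides $\det\widetilde M$. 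Since distinct linear forms $x_u-x_v$ are coprime, taking the product over all pairs gives $\delta^{2e-(p-1)}\mid\det\widetilde M$.

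The main obstacle is the case $d=r$, i.e.\ ${\mathscr D}_0(p)$ with $e=p-1$, where we cannot avoid $v$ among $d$ distinct indices. Here I would first use the structure (\ref{reMMPlus1}): for $e=p-1$ and $d=r$, $(r,p-1)\in D_0\cap D_1$, so $\det M_r=\pm\det M_{r-1}$, and the case $d=r-1$ is covered above.

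As a fallback, when avoidance is impossible I would instead let indices repeat. I would replace $x_v$ by an auxiliary independent variable $y$ in the $j$-Vandermonde (the factorization holds for arbitrary arguments $x$), so that only the $i$-indices matter. Taking $i$'s avoiding $v$ needs only $d\le r-1$, which we have arranged. Alternatively, for the $i$-side one can use arbitrary $t$-values $x_{i_a}^p$ replaced by generic $t_a$ for $a\ge2$, keeping $t_1=x_u^p$. The surviving multiplicity of $x_u-x_v$ is then exactly that from row $1$, namely $2e-(p-1)$, which finishes the argument.
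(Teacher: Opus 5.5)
Your proposal is correct and follows the paper's proof. Both arguments sandwich $\widetilde{M_d(f(x)^e)}$ between Vandermonde matrices, get row divisibility from Lemma~\ref{lemma1}, use that $x_u-x_v$ is coprime to Vandermonde factors not involving $x_v$ when $d\le r-1$, and reduce $d=r$ to $d=r-1$ via (\ref{reMMPlus1}). The only difference is that you pick index sets depending on the pair $\{u,v\}$, whereas the paper fixes the indices $1,\ldots,d$, takes $v=r$, and appeals to the symmetry of $\det\widetilde{M_d(f(x)^e)}$ in $x_1,\ldots,x_r$. Your final ``fallback'' paragraph is unnecessary; as written it would also need Lemma~\ref{lemma1} with a generic second argument in place of $x_j$, which the lemma's proof gives but its statement does not.
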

\begin{proof}
We put
\begin{equation}\label{defN}
N=
\begin{pmatrix}
1 & x_1^p & \cdots & x_1^{(d-1)p} \\
1 & x_2^p & \cdots & x_2^{(d-1)p} \\
\vdots & \vdots & \ddots & \vdots \\
1 & x_d^p & \cdots & x_d^{(d-1)p} \\
\end{pmatrix}
\widetilde{M_d(f(x)^e)}
\begin{pmatrix}
1         & 1         & \cdots & 1         \\
x_1       & x_2       & \cdots & x_d       \\
\vdots    & \vdots    & \ddots & \vdots    \\
x_1^{d-1} & x_2^{d-1} & \cdots & x_d^{d-1} 
\end{pmatrix}.
\end{equation}
Then, the $(i,j)$-component of $N$ is equal to 
$\left\{\left( \frac{d}{dx} \right)^{p-d} f(x)^e\right\}[x_i^p,x_j]$
for $1\le i \le d$, $1 \le j \le d$.
First, we assume $r > d$.
By Lemma \ref{lemma1}, the $i$-th row of $N$ is divisible by $\{\prod_{1\le\ell\le r,\;\ell\ne i}(x_i - x_{\ell})\}^{2e-(p-1)}$.
Hence, $\det N$ is divisible by $\{\prod_{1\le\ell\le r,\;\ell\ne i}(x_i - x_{\ell})\}^{2e-(p-1)}$ for any $1 \le i \le d$. 
In (\ref{defN}), the rightmost and the leftmost matrices in the right hand side are Vandermonde matrices, which do not contain $x_r$.
Hence, $\det \widetilde{M_d(f(x)^e)}$ is divisible by $(x_i - x_r)^{2e-(p-1)}$ for any $1 \le i \le d$.
Since $\det \widetilde{M_d(f(x)^e)}$ is symmetric in $x_1$, \ldots, $x_r$, $\det \widetilde{M_d(f(x)^e)}$ is divisible by $\delta(x_1,\ldots,x_r)^{2e-(p-1)}$.
Hence, $\det M_d(f(x)^e))$ is divisible by $\delta(x_1,\ldots,x_r)^{2e-(p-1)}$.
Second, in the case $r=d$, since $D_0(p) \subset D_1(p)$, by (\ref{reMMPlus1}) we obtain the assertion.
\end{proof}
%
%
%
%
We gave the set ${\mathscr B}(p)={\mathscr B}_{+}(p)\cup{\mathscr B}_0(p)\cup{\mathscr B}_{-}(p)$ in Introduction.
Now we have
\begin{align*}
{\mathscr B}_{+}(p) &= {\mathscr D}_0(p), \\
{\mathscr B}_0(p) &= {\mathscr D}_1(p), \\
{\mathscr B}_{-}(p) &= \{ (r,e,d)\in {\mathscr D}_2(p) |\ r(p-1-e)=p-1 \}.
\end{align*}
In the following Lemma, we consider the degree such that $\deg(x_i)=1$.
\begin{lemma}\label{lemma3}
For $(r,e,d)\in{\mathscr D}(p)$, we have
\[
\deg \det M_d(f(x)^e) = \deg \delta(x_1,\ldots,x_r)^{2e - (p-1)} \Longleftrightarrow (r,e,d)\in{\mathscr B}(p). 
\]
Hence, for $(r,e,d) \in {\mathscr B}(p)$, we have
\[
\det M_d(f(x)^e) = \varepsilon_{p,r,e,d}\ \delta(x_1,\ldots,x_r)^{2e-(p-1)}, \ \ \varepsilon_{p,r,e,d}\in\F_p.
\]
\end{lemma}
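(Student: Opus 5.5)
Our plan is to compute both degrees explicitly and compare them on each piece ${\mathscr D}_m(p)$. Each $c_k$ is homogeneous in $x_1,\ldots,x_r$ of degree $re-k$, since $f(x)^e$ is homogeneous of total degree $re$ when $\deg x=\deg x_i=1$. Every term of $\det M_d(f(x)^e)$ takes one entry from each row $i$ and from each column $j$, of degree $re-(ip+j-d-1)$. Hence $\det M_d(f(x)^e)$ is homogeneous, either zero or of degree
\[
d\,re-p\frac{d(d+1)}{2}-\frac{d(d+1)}{2}+d(d+1)=red-\frac{d(d+1)}{2}(p-1),
\]
consistent with (\ref{gdef}). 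Meanwhile $\deg \delta^{2e-(p-1)}=\frac{r(r-1)}{2}(2e-(p-1))$. By Lemma \ref{lemma2}, $\delta^{2e-(p-1)}$ divides $\det M_d(f(x)^e)$. So the degree of the determinant is at least that of $\delta^{2e-(p-1)}$, unless the determinant is $0$. In case of equality the quotient is a constant, which must lie in $\F_p$. This gives the second assertion once the equivalence is proved; $\varepsilon$ may be $0$, and that is allowed here.

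For the equivalence, we write $d=r-m$ with $(r,e)\in D_m(p)$ and $n=p-1-e$, so $e=p-1-n$. Set
\[
\Phi=red-\frac{d(d+1)}{2}(p-1)-\frac{r(r-1)}{2}(2e-(p-1)).
\]
We substitute $d=r-m$ and $e=p-1-n$ and expand. The $(p-1)$-terms quadratic in $r$ should cancel, leaving an expression in the quantities $rn$ and $m(p-1)$. Our guess, to be verified by direct algebra, is
\[
\Phi=\frac{1}{2}\bigl(rn-(m-1)(p-1)\bigr)\bigl(m(p-1)-rn\bigr)+(\text{a correction linear in } m),
\]
or a close variant. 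The correction is what will distinguish $m=1$ from $m=2$.

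The membership condition $(m-1)(p-1)\le rn\le m(p-1)$ makes the product nonnegative. We then check, case by case, where $\Phi=0$:
\begin{itemize}
\item[$m=0$:] forces $n=0$ and $d=r$, which is ${\mathscr B}_+(p)$.
\item[$m=1$:] $\Phi$ vanishes on all of $D_1(p)$, which is ${\mathscr B}_0(p)$.
\item[$m=2$:] $\Phi=0$ exactly when $rn=p-1$, which is ${\mathscr B}_-(p)$.
\item[$m\ge 3$:] $\Phi>0$ always.
\end{itemize}
As a sanity check on the guessed formula, we verify it on the boundary points $rn=(m-1)(p-1)$. These satisfy $D_{m-1}\cap D_m$, where (\ref{reMMPlus1}) shows the two determinants agree up to sign. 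Consistency then requires $\Phi_m=\Phi_{m-1}$ at those points.

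The main obstacle is this algebra: obtaining the exact closed form for $\Phi$ in terms of $m$, $n$ and $r$, and proving it is strictly positive for $m\ge3$ on $D_m(p)$. That strictness will use the bounds (\ref{Dprop1})--(\ref{Dprop4}), namely $r\le p+m$ and $e>(p-1)/2$, so that the linear correction cannot cancel the quadratic part.
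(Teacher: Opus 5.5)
Your overall plan is the paper's: count the homogeneous degree, use the divisibility of Lemma~\ref{lemma2}, and locate the zeros of the degree difference $\Phi$ on each ${\mathscr D}_m(p)$. Your count $red-\frac{d(d+1)}{2}(p-1)$ and your deduction of the second assertion are correct. The gap is the central step: you never compute $\Phi$, and the form you guess is wrong. Substituting $d=r-m$ and $e=p-1-n$ gives exactly
\[
\Phi=(r-d-1)\Bigl(\frac{(p-1)(r+d)}{2}-re\Bigr)=(m-1)\Bigl(rn-\frac{m(p-1)}{2}\Bigr).
\]
The paper writes this as $(p-1)(r-d-1)\{\frac{r-d}{2}-(\frac{re}{p-1}-d)\}$. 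It is linear in $rn$, with no $(rn)^2$ term. So your product $\frac12\bigl(rn-(m-1)(p-1)\bigr)\bigl(m(p-1)-rn\bigr)$ cannot be repaired by a correction linear in $m$. Your guess also contradicts your own case list. For $m=1$ it is $\frac12\, rn\,(p-1-rn)$ plus a constant, which is not constant on $D_1(p)$, yet you claim $\Phi\equiv 0$ there. So your four case conclusions are read off from the statement rather than derived from any formula.

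With the correct identity, everything follows from the membership inequalities $(m-1)(p-1)\le rn\le m(p-1)$ alone:
\begin{itemize}
\item For $m=0$, together with $n\ge 0$ they force $n=0$, so $\Phi=0$.
\item For $m=1$, the factor $m-1$ makes $\Phi=0$.
\item For $m=2$, $\Phi=rn-(p-1)\ge 0$, with equality exactly when $rn=p-1$, which is ${\mathscr B}_{-}(p)$.
\item For $m\ge 3$, $\Phi\ge (m-1)(m-2)(p-1)/2>0$.
\end{itemize}
The extra bounds you expected to need, namely (\ref{Dprop1})--(\ref{Dprop4}), $r\le p+m$ and $e>(p-1)/2$, never enter. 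Note also that for $m\ge 3$ the true $\Phi$ does not vanish on the lower boundary $rn=(m-1)(p-1)$; there it equals $(m-1)(m-2)(p-1)/2$. Your proposed consistency check $\Phi_m=\Phi_{m-1}$ at those points holds for the correct formula, but it would not hold for your guessed shape unless the correction were independent of $m$.
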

\begin{proof}
By
\begin{align*}
& \deg \det M_d(f(x)^e) -  \deg \delta(x_1,\ldots,x_r)^{2e-(p-1)}  \\
=
& \left\{ red - (p-1)\frac{d(d+1)}{2} \right\} - \frac{r(r-1)}{2} \{ 2e-(p-1) \} \\
=
& (p-1) (r-d-1) \left\{ \frac{r-d}{2} - \left(\frac{re}{p-1} - d\right) \right\},
\end{align*}
the assertion follows.
\end{proof}
%
The constant $\varepsilon_{p,r,e,d}\in\F_p$ shall be computed in what follows to show $\varepsilon_{p,r,e,d}\in\F_p^{\times}$.
%
%
%
%
\subsection{Periodic Permutation Matrix (PPM)}
Let $M_d((x^r-1)^e)$ and $M_d((x^r-x)^e)$ be the matrices $M_d(f(x)^e)$ with $f(x)$ specialized to $f(x)=x^r-1$ and $f(x)=x^r-x$ respectively.
In the following Subsections, we shall see that, in some cases, the matrices $M_{d}((x^r-1)^e)$ and $M_{d}((x^r-x)^e)$ give PPM's in the sense defined in the following

\begin{defi}[Periodic Permutation Matrix (PPM)]\label{PPMdef}
Let $h$, $k$ be integers such that $h\ge 1$, $k\ge 1$ and $\gcd(h,k)=1$. 
Let $M$ be a permutation matrix of size $d \ge 1$, i.e., there exists a permutation
$\sigma=\sigma_M$ of the set $\{1, \ldots, d\}$ such that 
\[
M_{i,j} = 
\begin{cases}
1 \ \ (\ j=\sigma(i)\ ) \\
0 \ \ (otherwise)
\end{cases}
\ \ (1 \le i,j \le d).
\]
We call M a periodic permutation matrix (PPM) of type $(h,k)$ of size $d$ if
\[
\sigma(i+1) - \sigma(i) = -h \ or \ k 
\]
holds for any $i$ such that $1 \le i \le d-1$.
When $\sigma(i+1)-\sigma(i)=-h$ [resp. $k$], we say that a left [resp. right] move occurs at $i$.
\end{defi}

%
%
When $d=h+k$, for each $m \in \{1,2,\ldots,d\}$, there exists a unique PPM of type $(h,k)$ of size $d$ such that $\sigma(1)=m$.
We denote this matrix by $A_m=A_m(h,k)$.
For $A_m(h,k)$, the permutation $\sigma$ is given by
\[
\sigma(i) = ki+m-k
\]
for $1 \le i \le d$, if we use the identification $\{1,2,\ldots,d\} \ni i \mapsto i \bmod d \in \Z/d\Z$. 
Any PPM of type $(h,k)$ of size $d=h+k$ is equal to $A_m(h,k)$ for some $m \in \{1,2,\ldots,d\}$.
In the following Proposition, we denote
\[
A_1 = B_d,\ A_2 = B_{d-1},\ \ldots,\ A_k=B_{h+1},\ A_{k+1}=B_h,\ \ldots,\ A_{d-1}=B_2,\ A_d=B_1.
\]
We also have a matrix $K=K(h,k)$ with the following properties
\[
A_1=
\begin{pmatrix}
1 \\
{} & K
\end{pmatrix}
,\ 
A_k=
\begin{pmatrix}
K \\
{} & 1
\end{pmatrix}
,\ 
B_h=
\begin{pmatrix}
{} & K \\
1
\end{pmatrix}
,\ 
B_1=
\begin{pmatrix}
{} & 1 \\
K
\end{pmatrix}
.
\]
We put
\[
{\rm I}_d =
\begin{pmatrix}
1 \\
{} & \ddots \\
{} & {} & 1
\end{pmatrix}
,\
{\rm J}_d =
\begin{pmatrix}
{} & {} & 1 \\
{} & \reflectbox{$\ddots$} \\
1 
\end{pmatrix}
\ \ (d\times d \text{ matrices})
.
\]
Then, ${\rm I}_d$ and ${\rm J}_d$ are PPM's of type $(h,1)$ and $(1,k)$ for any $h \ge 1$ and $k \ge 1$ respectively.
%
%
\begin{prop}\label{PPMclassification}
Let $h$, $k$, $d$ be integers such that $h\ge1$, $k\ge1$, $\gcd(h,k)=1$, $d\ge1$.
Then, the PPM's of type $(h,k)$ of size $d$ are classified as follows.
In particular, there exits no PPM such that $h\ge 2$, $k\ge 2$ and $d \ne 0,\ \pm 1 \bmod h+k$.
\begin{itemize}
%
%
\item[]
Case $h \ge 2$, $k \ge 2$: 
%
\begin{itemize}
\item[] $d = \ell(h+k)+1 \ \ (\ell \ge 0)$
\item[] $
\begin{pmatrix}
A_1 \\
{} & \ddots \\
{} & {} & A_1 \\
{} & {} & {} & {\rm I}_1
\end{pmatrix}
,\ 
\begin{pmatrix}
{} & {} & {} & B_1 \\
{} & {} & \reflectbox{$\ddots$} \\
{} & B_1 \\
{\rm I}_1
\end{pmatrix}
$
\end{itemize}
%
%
\item[]
\begin{itemize}
\item[] $d = \ell(h+k)+h+k-1 \ \ (\ell \ge 0)$ 
\item[] $
\begin{pmatrix}
A_k \\
{} & \ddots \\
{} & {} & A_k \\
{} & {} & {} & K
\end{pmatrix}
,\ 
\begin{pmatrix}
{} & {} & {} & B_h \\
{} & {} & \reflectbox{$\ddots$} \\
{} & B_h \\
K
\end{pmatrix}
$
\end{itemize}
%
%
\item[]
\begin{itemize}
\item[] $d = \ell(h+k) \ \ (\ell \ge 1)$ 
\item[] $
\begin{pmatrix}
A_1 \\
{} & \ddots \\
{} & {} & A_1 
\end{pmatrix}
,\ldots, 
\begin{pmatrix}
A_k \\
{} & \ddots \\
{} & {} & A_k 
\end{pmatrix}
$
,
$
\begin{pmatrix}
{} & {} & B_h \\
{} & \reflectbox{$\ddots$} \\
B_h 
\end{pmatrix}
,\ldots, 
\begin{pmatrix}
{} & {} & B_1 \\
{} & \reflectbox{$\ddots$} \\
B_1
\end{pmatrix}
$
\end{itemize}
\end{itemize}
%
%
%
\begin{itemize}
\item[] Case $h=1$, $k \ge 1$: (Hence,\ $B_1(1,k)={\rm J}_{1+k}$ and $K(1,k)={\rm J}_k$.)
%
\item[]
\begin{itemize}
\item[] $d = \ell(1+k)+m \ \ (\ell \ge 0,\ 1 \le m \le k)$ 
%
\item[] $
\begin{pmatrix}
A_m \\
{} & \ddots \\
{} & {} & A_m \\
{} & {} & {} & {\rm J}_m
\end{pmatrix}
,\ 
\begin{pmatrix}
{} & {} & {} & B_1 \\
{} & {} & \reflectbox{$\ddots$} \\
{} & B_1 \\
{\rm J}_m
\end{pmatrix}
$
\end{itemize}
%
\item[]
\begin{itemize}
\item[] $d = \ell(1+k) \ \ (\ell \ge 1)$ 
\item[] $
\begin{pmatrix}
A_1 \\
{} & \ddots \\
{} & {} & A_1 
\end{pmatrix}
,\ldots, 
\begin{pmatrix}
A_k \\
{} & \ddots \\
{} & {} & A_k 
\end{pmatrix}
$
,
$
\begin{pmatrix}
{} & {} & B_1 \\
{} & \reflectbox{$\ddots$} \\
B_1
\end{pmatrix}
$
\end{itemize}
\end{itemize}
%
%
\begin{itemize}
\item[] Case $h\ge1$, $k=1$:  (Hence,\ $A_1(h,1)={\rm I}_{h+1}$ and $K(h,1)={\rm I}_h$.)
%
\item[]
\begin{itemize}
\item[] $d = \ell(h+1)+m \ \ (\ell \ge 0, 1 \le m \le h)$ 
%
\item[] $
\begin{pmatrix}
A_1 \\
{} & \ddots \\
{} & {} & A_1 \\
{} & {} & {} & {\rm I}_m
\end{pmatrix}
,\ 
\begin{pmatrix}
{} & {} & {} & B_m \\
{} & {} & \reflectbox{$\ddots$} \\
{} & B_m \\
{\rm I}_m
\end{pmatrix}
$
\end{itemize}
%
%
\item[]
\begin{itemize}
\item[] $d = \ell(h+1) \ \ (\ell \ge 1)$ 
\item[] $
\begin{pmatrix}
A_1 \\
{} & \ddots \\
{} & {} & A_1 
\end{pmatrix}
,
\begin{pmatrix}
{} & {} & B_h \\
{} & \reflectbox{$\ddots$} \\
B_h
\end{pmatrix}
$
,
\ldots
,
$
\begin{pmatrix}
{} & {} & B_1 \\
{} & \reflectbox{$\ddots$} \\
B_1
\end{pmatrix}
$
\end{itemize}
\end{itemize}
%
Here, in all of the three cases, when $\ell=0$, the two matrices coincide.
\end{prop}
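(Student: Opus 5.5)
Encode a PPM by its permutation $\sigma$ and reduce everything to a statement about walks on $\{1,\ldots,d\}$ with steps $-h$ or $+k$. Every step changes $\sigma$ by $\equiv k \pmod{h+k}$, since $-h\equiv k$. Hence $\sigma(i)\equiv \sigma(1)+k(i-1) \pmod{h+k}$ for all $i$. Because $\gcd(h,k)=1$ implies $\gcd(k,h+k)=1$, the residues of $\sigma(1),\ldots,\sigma(h+k)$ modulo $h+k$ are all distinct. This congruence is the basic invariant.

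\emph{Local structure.} From a position $j=\sigma(i)$, at most one of $j-h$, $j+k$ can be unused and in range. I will show the walk is essentially forced.

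Consider a value $v\le h+k$. If $\sigma(i)=v$, the walk cannot arrive at $v$ by a right move from $v-k$ when $v\le k$, and cannot arrive by a left move from $v+h$ unless $v+h\le d$. Iterating this kind of argument shows the following. Within any window of $h+k$ consecutive indices the walk visits exactly $h+k$ consecutive values, i.e.\ a translate of $\{1,\ldots,h+k\}$. Since values must be used exactly once, these windows tile $\{1,\ldots,d\}$ by blocks $\{t(h+k)+1,\ldots,(t+1)(h+k)\}$.

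The proof of the key claim is by induction on $d$. Suppose the walk starts in the lowest block, i.e.\ $\sigma(1)\le h+k$. The walk is confined to $[1,h+k]$ until it has exhausted it, because exiting upward via $+k$ from a value $>h$ leaves a gap below that can never be refilled. A gap can only be refilled by a later left move, and a left move from above the block lands at $>h+k-h=k$. This is exactly the hard combinatorial step. I would handle it by a counting/extremal argument. Let $j_0$ be the smallest value not yet visited when the walk first exceeds $h+k$. Any later visit to $j_0$ requires arriving from $j_0+h$ by a left move or from $j_0-k$ by a right move. I will show both predecessors are either already used or out of range, unless the remaining size is $<h+k$.

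The case where the walk starts in the top block is symmetric under the reflection $i\mapsto d+1-i$ on rows. This reflection turns a PPM of type $(h,k)$ into one of type $(k,h)$ and interchanges the roles of the $A_m$ and $B_m$. This explains the antidiagonal families.

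\emph{Remainder.} After peeling off full blocks, a remainder of size $m<h+k$ survives, and I classify it directly. A PPM of size $m<h+k$ is rigid. Its step pattern must avoid leaving range, so for $h,k\ge2$ only $m=1$ (giving ${\rm I}_1$) or $m=h+k-1$ (giving $K$) are possible. The reason is that with $m<h+k$, each value has at most one in-range neighbour by $-h$ or $+k$. So the permutation graph is a path in the graph on $\{1,\ldots,m\}$ with edges $j\leftrightarrow j+k$ and $j\leftrightarrow j-h$. This graph is a union of paths, and it is connected only when $m\in\{1,h+k-1\}$, or when $h=1$ or $k=1$ (then every $m$ works, giving ${\rm J}_m$ or ${\rm I}_m$).

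\emph{Gluing.} The main remaining task is compatibility at the junction between consecutive blocks, which fixes which $A_m$ or $B_m$ may be repeated. A block $A_m$ ends at value $\sigma(h+k)$. The next block begins at $\sigma(h+k)+k$ or $\sigma(h+k)-h$ shifted by $h+k$. Using $\sigma(i)=ki+m-k \bmod (h+k)$, the last value of $A_m$ is $\equiv m \pmod{h+k}$. The first value of the next block must be $\equiv m$ as well, so all blocks share the same index $m$. Whether a partial remainder block can follow forces $m=1$ or $m=k$ (respectively $B_1$, $B_h$) when $h,k\ge2$. In the cases $h=1$ or $k=1$ it allows the indicated $m$ with ${\rm J}_m$ or ${\rm I}_m$.

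Finally, I check that each listed matrix is indeed a PPM: verify the step at each block junction equals $-h$ or $k$. This check confirms the list is complete and correct, including the coincidence of the two matrices when $\ell=0$. I expect the confinement/tiling claim in the second paragraph to be the main obstacle.
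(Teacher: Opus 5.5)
Your proposal has a genuine gap at its central step. The congruence $\sigma(i)\equiv\sigma(1)+k(i-1)\pmod{h+k}$ is fine. So is your treatment of sizes $m<h+k$, which is essentially the paper's sink-counting argument recast in path-graph form. But the confinement/tiling claim that everything rests on is not proved, and as formulated it is false. For $(h,k)=(2,3)$ the PPM ${\rm diag}(A_1,A_1)$ has $\sigma=1,4,2,5,3,6,9,7,10,8$. Rows $3,\ldots,7$ take the non-consecutive values $\{2,3,5,6,9\}$. At $\sigma(2)=4$ both $2$ and $7$ are unused and in range, so the local ``forcing'' claim fails too.

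Your proposed mechanism does not close either. Take $j_0\le h+k$, the least value still unvisited when the walk first leaves $[1,h+k]$. Its predecessor $j_0-k$ is indeed excluded. But $j_0+h$ need not have been used yet: it may be reached later, coming back from above, so no contradiction results.

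The case split is also incomplete. Splitting by whether $\sigma(1)$ lies in the lowest or the top value block leaves open a start in a middle block when $d>2(h+k)$, and ruling that out is part of what has to be proved. Moreover, the row reflection $i\mapsto d+1-i$ turns ``starts in the top block'' into ``ends in the top block'', not into ``starts in the lowest block''.

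The missing idea is the paper's. Let $i_1$ be the first row with $\sigma(i_1)\le h+k$, and $i_2$ the last row of the run from $i_1$ that stays in $[1,h+k]$. If $i_2<d$, the exit is a right move, so $\sigma(i_2)>h$ and $v=\sigma(i_2)-h\in[1,k]$. Since $v-k\le 0$, the only possible predecessor of $v$ is $\sigma(i_2)$, whose successor is $\sigma(i_2)+k\ne v$. Hence $v=\sigma(1)$, and so $i_1=1$. Now $\sigma(1)\equiv\sigma(i_2)+k$ together with $\sigma(i_2)\equiv\sigma(1)+(i_2-1)k$ gives $(h+k)\mid i_2$, so $i_2=h+k$. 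Thus the lowest value block is occupied either by the first or by the last $h+k$ rows ($M_{11}$ or $M_{21}$ is a PPM), and likewise for the top value block. This dichotomy is what drives the induction.

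Your gluing step also needs repair. The last value of $A_m$ is $\sigma(h+k)\equiv m-k$, not $m$. The admissible indices come from the junction move. Between diagonal blocks it is a right move $+k$, which needs the relative value $\sigma(h+k)=m+h$, i.e.\ $m\le k$. Between anti-diagonal blocks it is a left move $-h$, which needs $m>k$. This is what restricts the stacks to $A_1,\ldots,A_k$ and to $B_h,\ldots,B_1$ respectively, a restriction you assert but never derive.
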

%
%
\begin{proof}
Suppose $h\ge 1$, $k\ge 1$, $d\ge 1$ such that $\gcd(h,k)=1$ are given.
Let $M$ be a PPM of type $(h,k)$ of size $d$.
First, we assume $d<h+k$.

\noindent
Case $d=1$:
We should have $M={\rm I}_1$.

\noindent
Case $d\ge 2$, $h=1$:
Since $d \le k$, no right move occurs.
Hence, we have $M={\rm J}_d$.

\noindent
Case $d\ge 2$, $k=1$:
Similarly, we have $M={\rm I}_d$.

\noindent
Case $d\ge 2$, $h\ge 2$, $k\ge 2$:
Both right moves and left moves should occur.
Hence, $h<d$ and $k<d$.
By this, we obtain
\[
1 \le d-k < d-k+1 \le h < h+1 \le d.
\] 
Suppose we have $i\in\{1,2,\ldots,d\}$ such that $d-k+1 \le \sigma(i) \le h$.
Then, by $\sigma(i) -h \le 0$, a left move does not occur at $i$.
Moreover, by $d+1\le \sigma(i)+k$, a right move does not occur at $i$. 
Hence, we should have $i=d$.
This means that $d-k+1=\sigma(d)=h$.
Hence, we obtain $d=h+k-1$ and
\[
\begin{pmatrix}
M & {} \\
{} & 1
\end{pmatrix}
= A_k.
\]
Hence, we obtain $M=K$.
Second, we assume $d>h+k$.
We take four $(h+k)\times (h+k)$ matrices at four corners of $M$ as follows.
\[
\begin{pmatrix}
M_{11} & M_{12} \\
M_{21} & M_{22}
\end{pmatrix}.
\]
These matrices may intersect with each other.

We shall show that $M_{11}$ or $M_{21}$ is a PPM of type $(h,k)$.
We put
\begin{align*}
&i_1 = \min\{ 1 \le i \le d \mid 1 \le \sigma(i) \le h+k \}, \\
&i_2 = \max\{ i_1 \le i \le d \mid \{ \sigma(i_1), \sigma(i_1+1), \ldots, \sigma(i) \} \subset \{1, 2, \ldots, h+k\} \}.
\end{align*}
If $i_2=d$, then $M_{21}$ is a PPM of type $(h,k)$.
So we assume $i_2<d$.
By
\begin{align}
\sigma(i_2) \le h+k, \label{PPMProof1} \\
\sigma(i_2+1) > h+k, \label{PPMProof2}
\end{align}
we obtain
\begin{equation}
\sigma(i_2+1) = \sigma(i_2) + k. \label{PPMProof3}
\end{equation}
By (\ref{PPMProof1}), (\ref{PPMProof2}), (\ref{PPMProof3}), we obtain $1 \le \sigma(i_2)-h \le k$.
We take $i_3 \in \{1,2,\ldots,d\}$ such that
\begin{equation}
\sigma(i_3) = \sigma(i_2) - h. \label{PPMProof4}
\end{equation}
We claim that $i_3=1$.
If $i_3 > 1$, we have
\begin{equation}
\sigma(i_3) = \sigma(i_3 - 1) + \left\{ \begin{matrix} -h \\ k \end{matrix} \right\} . \label{PPMProof5}
\end{equation}
Here, $\left\{ \begin{matrix} -h \\ k \end{matrix}\right\}$ stands for $-h$ or $k$.
Hence, by (\ref{PPMProof4}), we have
\begin{equation}
\sigma(i_2) = \sigma(i_3-1) + \left\{ \begin{matrix} 0 \\ h+k \end{matrix} \right\}. \label{PPMProof6}
\end{equation}
By (\ref{PPMProof1}), $h+k$ does not occur in (\ref{PPMProof6}).
Then, we have $\sigma(i_2) = \sigma(i_3-1)$. Hence, $i_2 = i_3-1$.
By (\ref{PPMProof3}), (\ref{PPMProof4}), this implies
\[
\sigma(i_2) + k = \sigma(i_2 + 1) = \sigma(i_3) = \sigma(i_2) -h.
\]
This is a contradiction.
Hence, we should have $i_1=i_3=1$.
Then, we have
\[
\sigma(1) + \underbrace{
\left\{ \begin{matrix} -h \\ k \end{matrix} \right\}
+\cdots+
\left\{ \begin{matrix} -h \\ k \end{matrix} \right\}
}_{\text{$i_2 - 1$ times}} = \sigma(i_2) 
\]
By (\ref{PPMProof4}), we obtain
\[
i_2 k = 0 \bmod h+k.
\]
This implies $i_2 = h+k$.
Hence, $M_{11}$ is a PPM of type $(h,k)$.

Thus, we obtained that $M_{11}$ or $M_{21}$ is a PPM of type $(h,k)$.
Similarly, we obtain that $M_{12}$ or $M_{22}$ is a PPM of type $(h,k)$.
By these,
we inductively obtain the assertion for the case $d > h+k$.
\end{proof}
We shall apply Proposition \ref{PPMclassification} to prove Theorem 2 in Lemma \ref{UinB2}. 

%
%
\begin{defi}
For integers $d \ge 1$ and $k$ such that $\gcd(d,k)=1$, we denote
the signature of the bijection $\Z/d\Z \ni x \mapsto kx \in \Z/d\Z$ by $\left[ \dfrac{k}{\;d\;} \right]$.
\[
\left[ \frac{k}{\;d\;} \right] = {\rm sgn} (\Z/d\Z \xrightarrow{\times k} \Z/d\Z)
\]
\end{defi}
%
%
\begin{prop}
For integers $d \ge 1$ and $k$ such that $\gcd(d,k)=1$, we have 
\[
\left[ \frac{k}{\;d\;} \right] =
\begin{cases}
\left( \frac{k}{\;d\;} \right) & (\text{when $d$ is odd}) \\
(-1)^{\frac{k-1}{2} \frac{d-2}{2}}& (\text{when $d$ is even})
\end{cases}
.
\]
Here, $\left( \frac{k}{\;d\;} \right)$ denotes the Jacobi symbol.
In particular, we have
\begin{align}
\left[ \frac{-1}{\;d\;} \right] &= (-1)^{\frac{(d-1)(d-2)}{2}} \label{recipro1} \\
\left[ \frac{2}{\;d\;} \right] &= (-1)^\frac{d^2-1}{8} \label{recipro2} \\
\left[ \frac{p}{\;d\;} \right] &= (-1)^{\frac{p-1}{2}\frac{(d-1)(d-2)}{2}}\left( \frac{d}{\;p\;} \right)^d \ \ (\text{$p$ is an odd prime}) \label{recipro3}
\end{align}
\end{prop}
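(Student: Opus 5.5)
The plan is to treat $\left[\frac{k}{\;d\;}\right]$ as the sign character $k\mapsto{\rm sgn}(\times k)$ on $(\Z/d\Z)^\times$. It is a group homomorphism because composing $\times k$ and $\times k'$ gives $\times kk'$. The proof then has three steps: reduce to prime-power moduli by the Chinese remainder theorem, compute the sign on a generating set in each prime-power case, and deduce (\ref{recipro1})--(\ref{recipro3}) from the classical reciprocity laws.

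\emph{Reduction.} Suppose $d=d_1d_2$ with $\gcd(d_1,d_2)=1$. Under $\Z/d\Z\cong\Z/d_1\Z\times\Z/d_2\Z$, multiplication by $k$ becomes $\sigma_1\times\sigma_2$. Writing $\sigma_1\times\sigma_2=(\sigma_1\times{\rm id})\circ({\rm id}\times\sigma_2)$, and noting that $\sigma_1\times{\rm id}$ is $d_2$ disjoint copies of $\sigma_1$, we get
\[
\left[ \frac{k}{\;d\;} \right]=\left[ \frac{k}{\;d_1\;} \right]^{d_2}\left[ \frac{k}{\;d_2\;} \right]^{d_1}.
\]
For $d$ odd this says the symbol is multiplicative in $d$, exactly like the Jacobi symbol, so it suffices to treat $d=q^a$ with $q$ an odd prime. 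For $d=2^am$ with $m$ odd, it gives $\left[\frac{k}{\;d\;}\right]=\left[\frac{k}{\;2^a\;}\right]$. The claimed right-hand side depends on $m$ only through the parity of $\frac{d-2}{2}=2^{a-1}m-1$. That number is even when $a=1$ and odd when $a\ge2$, so the claim reduces to $d=2^a$.

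\emph{Prime powers.} Let $d=q^a$ with $q$ odd. Since $(\Z/q^a\Z)^\times$ is cyclic, it suffices to check a generator $g$. The orbits of $\times g$ are $\{0\}$ together with, for each $0\le j<a$, the set of elements of exact valuation $j$. That set is a single cycle of length $\varphi(q^{a-j})$, which is even. Hence the sign of $\times g$ is $(-1)^a=\left(\frac{g}{\;q^a\;}\right)$. Both sides are characters agreeing on a generator, so they are equal.

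Let $d=2^a$. The case $a=1$ is trivial, so take $a\ge2$; we must show the sign equals $(-1)^{(k-1)/2}$. The group $(\Z/2^a\Z)^\times$ is generated by $-1$ and $5$, so we check these two.
\begin{itemize}
\item $\times(-1)$ fixes $0$ and $2^{a-1}$ and swaps the remaining elements in pairs. That is $2^{a-1}-1$ transpositions, so the sign is $-1$.
\item $\times 5$ preserves each valuation layer $2^j(\Z/2^{a-j}\Z)^\times$. On a layer with $b=a-j\ge3$ it splits into two cycles of equal length $2^{b-2}$, so its sign there is $+1$. On layers with $b\le2$ it acts trivially, since $5\equiv1\bmod 4$. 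Hence the sign is $+1$.
\end{itemize}
This case-by-case cycle count for $d=2^a$ is the only real computation in the proof, and it is where I would be most careful.

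\emph{Special cases.}
\begin{itemize}
\item (\ref{recipro1}): for $d$ odd, $\left(\frac{-1}{\;d\;}\right)=(-1)^{(d-1)/2}$, and this agrees with $(-1)^{(d-1)(d-2)/2}$ because $d-2$ is odd. For $d$ even the formula gives $(-1)^{(d-2)/2}$, which agrees because $d-1$ is odd.
\item (\ref{recipro2}): here $d$ must be odd, and this is the second supplementary law.
\item (\ref{recipro3}): for $d$ odd, apply Jacobi reciprocity $\left(\frac{p}{\;d\;}\right)=(-1)^{\frac{p-1}{2}\frac{d-1}{2}}\left(\frac{d}{\;p\;}\right)$. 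Then note that $\frac{(d-1)(d-2)}{2}\equiv\frac{d-1}{2}\pmod 2$ and $\left(\frac{d}{\;p\;}\right)^d=\left(\frac{d}{\;p\;}\right)$. For $d$ even (so $p\nmid d$), $\left(\frac{d}{\;p\;}\right)^d=1$ and $\frac{(d-1)(d-2)}{2}\equiv\frac{d-2}{2}\pmod 2$, which matches the even case of the main formula with $k=p$.
\end{itemize}
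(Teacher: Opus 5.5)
Your proof is correct and follows essentially the paper's route: the sign formula ${\rm sgn}(\sigma\times\tau)={\rm sgn}(\sigma)^{|Y|}\,{\rm sgn}(\tau)^{|X|}$ combined with the Chinese remainder theorem, then odd moduli and powers of $2$ handled separately, with the latter checked on the generators $-1$ and $5$ of $(\Z/2^a\Z)^\times$. The differences are that the paper simply cites Zolotarev's lemma for odd $d$, whereas you prove it for odd prime powers by a primitive-root cycle count (implicitly using that a primitive root mod $q^a$ is a quadratic non-residue mod $q$), and that you derive the three special cases explicitly from the classical reciprocity laws, which the paper leaves unstated.
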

\begin{proof}
In general, for finite sets $X$ and $Y$ and bijections $\sigma:X\rightarrow X$ and $\tau:Y\rightarrow Y$, we have
\[
{\rm sgn} (\sigma \times \tau:X\times Y \rightarrow X\times Y)=
{\rm sgn} (\sigma : X \rightarrow X)^{|Y|}\cdot 
{\rm sgn} (\tau   : Y \rightarrow Y)^{|X|}.
\]
By this formula with the Chinese remainder theorem, the assertion follows from the following two facts.
\begin{align}
{\rm sgn} (\Z/d\Z \xrightarrow{\times k} \Z/d\Z) &= \left( \frac{k}{\;d\;} \right) \ (\text{when $d\ge 1$ is odd and $\gcd(k,d)=1$}) \label{sgn1}
\\
{\rm sgn} (\Z/2^f\Z \xrightarrow{\times k} \Z/2^f\Z) &= \label{sgn2}
\begin{cases}
1 & (\text{when $f=1$ and $k$ is odd}) 
\\
(-1)^{(k-1)/2} & (\text{when $f \ge 2$ and $k$ is odd})
\end{cases}
\end{align}
The equality (\ref{sgn1}) is known (cf. \cite{Zolotareff}, \cite{Rousseau}, \cite{Sz}).
The equality (\ref{sgn2}) can be verified by using
\[
\left( \Z / 2^f \Z \right)^{\times} = \langle 5 \rangle \times \langle -1 \rangle \ \ \ \ (f\ge 3),
\]
where $\langle 5 \rangle$ and $\langle -1 \rangle$ denote the cyclic groups generated by $5$ and $-1$ respectively.
\end{proof}
%
%
\begin{prop}\label{PPMdet}
Let $h$, $k$, $m$ be integers such that $h \ge 1$, $k \ge 1$, $\gcd(h,k)=1$, $1 \le m \le d=h+k$.
Then, we have
\[
\det A_m(h,k) =(-1)^{(d-1)(m-1)} \left[ \frac{k}{\;d\;} \right]
.
\]
In particular, we have
\[
\det K(h,k) =\left[ \frac{k}{\;d\;} \right]
.
\]
\end{prop}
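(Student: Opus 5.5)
The plan is to read the determinant of a permutation matrix as the signature of its permutation and compute that signature from the explicit formula for $\sigma$. For a permutation matrix $M$ with $M_{i,\sigma(i)}=1$, the Leibniz expansion has exactly one nonzero term, so $\det M={\rm sgn}(\sigma_M)$. It therefore suffices to compute ${\rm sgn}(\sigma)$ for the permutation of $A_m(h,k)$, which was given earlier as
\[
\sigma(i)=ki+m-k \pmod d,
\]
using the identification $\{1,\ldots,d\}\ni i\mapsto i \bmod d\in\Z/d\Z$.

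First I will factor this permutation. Let $\tau:\Z/d\Z\to\Z/d\Z$ be the translation $x\mapsto x+1$. Let $\sigma_1$ be the permutation for $m=1$, namely $\sigma_1(i)=k(i-1)+1$. Then $\sigma=\tau^{m-1}\circ\sigma_1$, so
\[
{\rm sgn}(\sigma)={\rm sgn}(\tau)^{m-1}\,{\rm sgn}(\sigma_1).
\]
The translation $\tau$ is a single $d$-cycle, so ${\rm sgn}(\tau)=(-1)^{d-1}$. This gives the factor $(-1)^{(d-1)(m-1)}$.

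Next I will identify $\sigma_1$. We have $\sigma_1=\tau\circ\mu_k\circ\tau^{-1}$, where $\mu_k:x\mapsto kx$ on $\Z/d\Z$; indeed $\tau\mu_k\tau^{-1}(x)=k(x-1)+1$. Signature is invariant under conjugation, and $\mu_k$ is a bijection because $\gcd(k,d)=1$, which follows from $\gcd(h,k)=1$ and $d=h+k$. Hence ${\rm sgn}(\sigma_1)={\rm sgn}(\mu_k)=\left[\frac{k}{\;d\;}\right]$ by the Definition of this symbol. Combining the two factors gives
\[
\det A_m(h,k)=(-1)^{(d-1)(m-1)}\left[\frac{k}{\;d\;}\right].
\]

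For the second assertion, recall the block relation $A_1=\begin{pmatrix}1&\\&K\end{pmatrix}$. Taking determinants gives $\det K=\det A_1$, which equals $\left[\frac{k}{\;d\;}\right]$ by the case $m=1$.

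The only delicate point is checking that the identification of $\{1,\ldots,d\}$ with $\Z/d\Z$ (so that $d\leftrightarrow 0$) does not change signatures. It does not, because a signature depends only on the abstract permutation of a finite set, not on how that set is labelled. There is no real obstacle beyond this bookkeeping.
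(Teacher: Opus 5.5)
Your proof is correct and is essentially the paper's argument: both identify $\det A_m(h,k)$ with the signature of the affine map $x\mapsto kx+(m-k)$ on $\Z/d\Z$ and split it into a translation and multiplication by $k$. The only difference is bookkeeping. The paper composes $\times k$ with translation by $m-k$ and then needs the parity fact $(h+1)(k+1)\equiv 0 \bmod 2$ to rewrite $(-1)^{(d-1)(m-k)}$ as $(-1)^{(d-1)(m-1)}$. Your conjugation $\sigma_1=\tau\mu_k\tau^{-1}$ produces the exponent $m-1$ directly and makes that step unnecessary.
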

\begin{proof}
Using the identification $\{1,2,\ldots,d\} \ni i \mapsto i \bmod d \in \Z/d\Z$,
we obtain
\begin{equation*}
\det A_m(h,k) 
=
{\rm sgn} (\Z/d\Z \xrightarrow{+(m-k)} \Z/d\Z) 
\cdot
{\rm sgn} (\Z/d\Z \xrightarrow{\times k} \Z/d\Z).
\end{equation*}
Then,
\begin{align*}
&
{\rm sgn} (\Z/d\Z \xrightarrow{+1} \Z/d\Z)
=
(-1)^{d-1}, \\
& 
\gcd(h,k)=1 \Rightarrow (h+1)(k+1)=0 \bmod 2
\end{align*}
imply the assertion.
\end{proof}
%
%
%
%
\subsection{The constant $\varepsilon_{p,r,e,d}$\ }
In this Subsection, we shall compute $\varepsilon_{p,r,e,d}$ for a prime $p$ and $(r,e,d)\in {\mathscr B}_0(p)$
to complete the proof of Theorem 1.
To do this, we specialize the generic monic polynomial $f(x)=(x-x_1)\cdots(x-x_r)$ of degree $r$ to $f(x)=x^r-1$ [resp. $f(x)=x^r - x -1$ ] 
when $p \nmid r$ [resp. $p=r$].
We also denote the discriminant of $f(x)$ by $\Delta(x^r-1)$ [resp. $\Delta(x^r-x-1)$] under this specialization.

We remark that
\begin{align}
\Delta(x^r-1) &= (-1)^{(r-1)(r-2)/2}\ r^r, \label{Deltaxr1} \\
\Delta(x^r-x-1) &= (-1)^{r(r+1)/2}, \label{Deltaxrx1}
\end{align}
where (\ref{Deltaxr1}) is valid whenever $r \ge 2$, and
(\ref{Deltaxrx1}) is valid whenever $p \mid r \ge 2$.

%
%
\begin{prop}\label{epsilon_formula1}
For $(r,e,d)\in{\mathscr B}_0(p)$ such that $p \nmid r$, we have
\begin{equation}
\varepsilon_{p,r,e,d}=(-1)^{\frac{(r+1)(r+2)}{2}+ \frac{r(r+1)}{2} e}\,\{ r(p-1-e)\}!\, e!^r . \label{epsilonForB0p} 
\end{equation}
\end{prop}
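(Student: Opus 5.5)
The plan is to specialize $f(x)=x^r-1$, which is legitimate since $p\nmid r$, and compare both sides of the identity of Lemma \ref{lemma3},
\[
\det M_{r-1}(f(x)^e)=\varepsilon_{p,r,e,r-1}\,\delta(x_1,\ldots,x_r)^{2e-(p-1)} .
\]
For $p$ odd the exponent $g=2e-(p-1)$ is even, so $\delta^g=\Delta(f)^{e-(p-1)/2}$. After specialization, (\ref{Deltaxr1}) turns this into $\bigl((-1)^{(r-1)(r-2)/2}r^r\bigr)^{e-(p-1)/2}$, which is a unit in $\F_p$. The case $p=2$ forces $e=1$ and $r\le 3$; there $\delta$ is symmetric, $\delta^1=\Delta^{1/2}$ is handled directly, and I would check the few cases by hand. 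So $\varepsilon$ is determined once $\det M_{r-1}((x^r-1)^e)$ is computed in closed form.

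\textbf{Step 1: the specialized matrix is a monomial matrix.} We have $(x^r-1)^e=\sum_m \binom{e}{m}(-1)^{e-m}x^{rm}$, so $c_\ell\neq0$ only if $r\mid \ell$. The $(i,j)$ entry is $c_{ip+j-r}$ with $1\le i,j\le r-1$. Since $\gcd(p,r)=1$, each row $i$ has exactly one column with $ip+j\equiv 0 \pmod r$, namely $j=\sigma(i)$ with $\sigma(i)\equiv -ip \pmod r$. The map $\sigma$ is a bijection of $(\Z/r\Z)\setminus\{0\}$. The corresponding exponent is $m_i=\lfloor ip/r\rfloor$.

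I must check that $0\le m_i\le e$ for all $i\le r-1$. The worst case is $i=r-1$, where it reduces to $\lfloor (r-1)p/r\rfloor\le e$. This is equivalent to $r(p-1-e)\le p-1$, plus a small rounding check; this is exactly where the ${\mathscr B}_0(p)$ condition enters. Hence
\[
\det M_{r-1}=\operatorname{sgn}(\sigma)\,\prod_{i=1}^{r-1}(-1)^{e-m_i}\binom{e}{m_i}.
\]
Extending $\sigma$ to all of $\Z/r\Z$ by $0\mapsto0$, its sign is $\left[\frac{-p}{\;r\;}\right]$. By the reciprocity formulas (\ref{recipro1}) and (\ref{recipro3}), this equals an explicit power of $-1$ times $\left(\frac{r}{p}\right)^r$.

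\textbf{Step 2: the product of binomials mod $p$ (main obstacle).} I need to evaluate $\prod_i e!/(m_i!\,(e-m_i)!)$ modulo $p$. Put $n=p-1-e$. Wilson's theorem in the form $a!\,(p-1-a)!\equiv(-1)^{a+1}\pmod p$ converts $(e-m_i)!$ into $(p-1-e+m_i)!^{-1}=(n+m_i)!^{-1}$ up to sign. The numbers $m_i=\lfloor ip/r\rfloor$ cut $[0,p-1]$ into $r$ blocks of length about $p/r$. The products $m_i!$ and $(n+m_i)!$ then telescope into products over consecutive blocks, which should collapse to $(rn)!$ times a Gauss-multiplication-type product.

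I expect that Gauss product to contribute $r^{\pm r(e-(p-1)/2)}$ and a Legendre symbol $\left(\frac{r}{p}\right)^{r}$. These must cancel against the $r^r$ from $\Delta(x^r-1)^{e-(p-1)/2}$ and the sign $\left(\frac{r}{p}\right)^r$ from Step 1, using $r^{(p-1)/2}\equiv\left(\frac{r}{p}\right)$. What should remain is $(rn)!\,e!^r$ times a sign. I would first verify the telescoping carefully for $n=0$, where it should reduce to $e!^r=(p-1)!^r=(-1)^r$, and then induct on $n$ by comparing $e$ with $e+1$.

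\textbf{Step 3: bookkeeping of signs.} Finally I collect all the signs: $\operatorname{sgn}\sigma$, the factors $(-1)^{e-m_i}$, the Wilson signs, and $(-1)^{\frac{(r-1)(r-2)}{2}(e-(p-1)/2)}$. I then check that the total equals $(-1)^{\frac{(r+1)(r+2)}{2}+\frac{r(r+1)}{2}e}$. This uses $\sum_i m_i=\frac{(p-1)(r-1)}{2}$, which holds because $\gcd(p,r)=1$. It is routine but error-prone, and small cases ($r=2$, small $p$) serve as a check.
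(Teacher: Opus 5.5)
Your proposal is correct. Its skeleton is the paper's: specialize to $x^r-1$, show that $M_{r-1}((x^r-1)^e)$ is a monomial matrix with entries $(-1)^{e-m_i}\binom{e}{m_i}$, use $\sum_i m_i=\frac{(p-1)(r-1)}{2}$, evaluate $\prod_i\binom{e}{m_i}$, and divide by $\Delta(x^r-1)^{e-(p-1)/2}$. Your fallback of inducting on $n$ from $e=p-1$ is exactly the paper's computation.

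The one genuine difference is the sign of the permutation. The paper notes that $\sigma(i+1)-\sigma(i)\in\{-h,k\}$ with $h\equiv p$, $k\equiv -p \pmod r$. It then invokes the classification of periodic permutation matrices (Proposition \ref{PPMclassification}) to identify the matrix as $K(h,k)$, and uses Proposition \ref{PPMdet} to get $\left[\frac{-p}{\;r\;}\right]$. You instead read off $\sigma(i)\equiv -ip \pmod r$ directly, so ${\rm sgn}\,\sigma=\left[\frac{-p}{\;r\;}\right]$ by definition. That is shorter and self-contained for this proposition. The PPM machinery only earns its keep later (Theorem 2, Lemma \ref{Updeqrm2C4}), where the size $d$ is not known in advance and the classification is needed to rule cases out.

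Step 2 you leave as an expectation, so you should state explicitly the fact that drives both of your variants. From $rm_i=ip+\sigma(i)-r$ one gets $rm_i\equiv-(r-\sigma(i)) \pmod p$. Hence $rm_1,\ldots,rm_{r-1}$ are $-1,\ldots,-(r-1)$ modulo $p$ in some order. With this, $\prod_i (m_i+1)\cdots(m_i+n)\equiv r^{-n(r-1)}\prod_{k=1}^{n}\prod_{j=1}^{r-1}(kr-j)=\frac{(rn)!}{r^{rn}\,n!}$. Combined with $n!\,e!\equiv(-1)^{e+1}$, your Wilson route gives $\prod_i\binom{e}{m_i}=\pm\, e!^{r}(rn)!\,r^{-rn}$.

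The paper uses the same congruence in the ratio $\prod_i\frac{e+1-m_i}{e+1}=\prod_{j=1}^{r-1}\frac{nr-j}{nr}$, which is legitimate since $nr\le p-1$. Your predicted cancellations then hold exactly, because $r^{-rn}=r^{r(e-(p-1)/2)}\left(\frac{r}{p}\right)^{r}$. The remaining signs do combine to $(-1)^{\frac{(r+1)(r+2)}{2}+\frac{r(r+1)}{2}e}$.
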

\begin{proof}
We put $M=M_{d}((x^r-1)^e)$.
We shall compute $\det M$.
We have
\[
(x^r-1)^e = \sum_{0 \le m \le e} (-1)^{e-m} \binom{e}{m} x^{mr}.
\]
By the definition of ${\mathscr B}_0(p)$, we have $d=r-1$.
For $1 \le i \le d$, we have a unique $m_i \in \Z$ such that $ip - d \le m_i r \le ip$.
By $p \nmid r$, we have $m_i r \ne ip$.
We have
\begin{align*}
& m_i r \ge ip-d \ge p-d=p+1-r \ge 0, \\
& m_i r \le ip-1 \le dp-1=(r-1)(p-1)+r-2 \le re + r -2 < re + r.
\end{align*}
Hence we have $0 \le m_i \le e$.
By $e \le p-1$, we have $\binom{e}{m_i} \ne 0$ in $\F_p$.
Hence, each row of $M$ contains only one non-zero component.
Since
\[
m_{i_1} r = m_{i_2} r \bmod p \ \ \Longrightarrow\ \  m_{i_1} = m_{i_2} \ \ \Longrightarrow\ \  i_1 = i_2,
\]
each column of $M$ has only one non-zero component.
Therefore, $M$ gives a permutation matrix, which we denote by $\widetilde{M}$.
The permutation $\sigma$ of $\widetilde{M}$ is given by
\[
\sigma(i) = m_i r - (ip - d) + 1\ \ \ (1 \le i \le d).
\]
Hence, for $1 \le i \le d-1$, we have
\[
\sigma(i+1) - \sigma(i) = (m_{i+1} - m_i) r - p.
\]
We take integers $h$ and $k$ defined by
\[
h = p \bmod r,\ \ 0 \le h \le r-1,\ \ k = -p \bmod r,\ \ 0 \le k \le r-1.
\]
Then, we obtain
\[
h \ge 1,\ \ k \ge 1,\ \ \gcd(h,k)=1,\ \ h+k=r.
\]
We also have
\[
|\sigma(i+1)-\sigma(i)| \le d-1 = r-2 \le r-1.
\]
Hence, $\widetilde{M}$ is a PPM of type $(h,k)$ of size $h+k-1$.
Hence, by Proposition \ref{PPMclassification} and Proposition \ref{PPMdet}, we obtain 
\[
\widetilde{M} = K(h,k),\ \ \det\widetilde{M} =
\left[ \frac{-p}{\;r\;} \right].
\]
By this, we have
\[
\det M = \left[ \frac{-p}{\;r\;}\right] \prod_{1 \le i \le d} (-1)^{e-m_i} \binom{e}{m_i}.
\]
In particular, we have $\varepsilon_{p,r,e,d} \ne 0$.
Hence, we have $\varepsilon_{p,r,e,d}=1$ when $p=2$.
In what follows, we assume $p \ne 2$.

Since $M$ gives a permutation matrix, we have
\[
\sum_{1 \le i \le d} m_i r = \sum_{1 \le i \le d} (ip-d) + \sum_{1 \le j \le d} (j-1) = (p-1) \frac{d(d+1)}{2}.
\]
Hence,
\begin{equation}
\sum_{1 \le i \le d} m_i = \frac{p-1}{2} d.\label{result1}
\end{equation}
Now, we compute $\prod_{1 \le i \le d}\binom{e}{m_i}$ inductively on $e =p-1$, $p-2$, \ldots, $e$, i.e., $n=0$, $1$, \ldots, $p-1-e$ if we put $n=p-1-e$.
We remark that $m_i$'s are common for these $e$'s.
When $e=p-1$, we have
\begin{align*}
\prod_{1 \le i \le d}\binom{p-1}{m_i} 
&=
\prod_{1 \le i \le d} \frac{p-1}{1} \frac{p-2}{2} \cdots \frac{p-m_i}{m_i} \\
&=
\prod_{1 \le i \le d} (-1)^{m_i} \\
&=
(-1)^{\frac{p-1}{2} d}.
\end{align*}
Furthermore, we have
\begin{align*}
\frac{\prod_{1 \le i \le d} \binom{e}{m_i} } {\prod_{1 \le i \le d} \binom{e+1}{m_i}} 
&=
\prod_{1 \le i \le d} \frac{e+1-m_i}{e+1} \\
&=
\prod_{1 \le i \le d} \frac{nr+m_i r}{nr} \\
&=
\frac{\{nr-(r-1)\} \{nr-(r-2)\} \cdots \{nr-1\}}{(nr)^d} \\
&=
\frac{\{nr-(r-1)\} \{nr-(r-2)\} \cdots \{nr-1\}nr}{(nr)^r}.
\end{align*}
By these, we obtain
\begin{equation}
\prod_{1 \le i \le d} \binom{e}{m_i} = (-1)^{\frac{p-1}{2} d} \frac{(nr)!}{(n!)^r r^{nr}}. \label{result2}
\end{equation}
Here, we remark that, by $e \le p-1$ and $re \ge (p-1)(r-1)$, we have $0 \le nr \le p-1$.
We remark that $(p-1-e)!\,e! = (-1)^{e+1}$ in $\F_p$.
By (\ref{recipro1}), (\ref{recipro3}), (\ref{Deltaxr1}), (\ref{result1}), (\ref{result2}) and $\frac{g}{2} = e -\frac{p-1}{2}$, we obtain the assertion (\ref{epsilonForB0p}).
\end{proof}

%
%
\begin{prop}\label{epsilon_formula2}
For $(r,e,d)\in{\mathscr B}_0(p)$ such that $p \mid r$, we have
$r=p$, $e=p-1$, $d=p-1$, and
\[
\varepsilon_{p,r,e,d} = \begin{cases}
1 & p =1,\ 2 \bmod 4 \\
-1 & p = 3 \bmod 4
\end{cases}.
\]
\end{prop}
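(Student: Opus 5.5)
We first pin down the parameters, then evaluate the identity of Lemma \ref{lemma3} at a single specialization of $f(x)$ where $\det M_d$ is easy to compute.

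\emph{Parameters.} Let $(r,e,d)\in{\mathscr B}_0(p)$ with $p\mid r$. Since $2\le r\le p+1$, we must have $r=p$. The condition $r(p-1-e)\le p-1$ then reads $p(p-1-e)\le p-1$, so $p-1-e=0$, i.e. $e=p-1$. Finally $d=r-1=p-1$. Consequently $g=2e-(p-1)=p-1$.

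\emph{Reduction to a specialization.} By Lemma \ref{lemma3} we already know
\[
\det M_{p-1}(f(x)^{p-1})=\varepsilon\,\delta(x_1,\ldots,x_r)^{p-1}
\]
with $\varepsilon=\varepsilon_{p,r,e,d}\in\F_p$. If $p=2$, then $\F_2^{\times}=\{1\}$, so it suffices to show $\det M\neq0$. For odd $p$ we have $\delta^{p-1}=(\delta^2)^{(p-1)/2}=\Delta(f)^{(p-1)/2}$ for monic $f$. This is a symmetric expression, so it survives specialization of $s_1,\ldots,s_r$. We specialize to $f(x)=x^p-x-1$. By (\ref{Deltaxrx1}) the right-hand side becomes
\[
\varepsilon\,(-1)^{\frac{p(p+1)}{2}\cdot\frac{p-1}{2}}.
\]
It therefore suffices to compute $\det M_{p-1}((x^p-x-1)^{p-1})$.

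\emph{The determinant.} Expand
\[
(x^p-x-1)^{p-1}=\sum_{a}\binom{p-1}{a}x^{ap}\,(-1)^{p-1-a}(x+1)^{p-1-a}.
\]
The row-$i$ entries are the coefficients $c_{ip-j}$ with $1\le j\le p-1$. Since $\deg (x+1)^{p-1-a}\le p-1$, only the term $a=i-1$ can reach the degree window $[ip-(p-1),\,ip-1]$. This gives
\[
c_{ip-j}=\binom{p-1}{i-1}(-1)^{p-i}\binom{p-i}{p-j},
\]
which is nonzero only when $j\ge i$. In the column indexing of (\ref{defMd}), $c_{ip-j}$ sits in column $j'=p-j$. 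Hence $M$ vanishes below the anti-diagonal $j'=p-i$, and $\det M$ is the sign of the reversal permutation of $\{1,\ldots,p-1\}$ times the product of the anti-diagonal entries. On the anti-diagonal $j=i$, and the entry is
\[
\binom{p-1}{i-1}(-1)^{p-i}=(-1)^{i-1}(-1)^{p-i}=(-1)^{p-1}=1 \quad\text{in }\F_p.
\]
So $\det M=(-1)^{(p-1)(p-2)/2}\neq 0$. In particular this settles $p=2$, giving $\varepsilon=1$.

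\emph{Sign bookkeeping.} For odd $p$ we get
\[
\varepsilon=(-1)^{\frac{(p-1)(p-2)}{2}+\frac{p(p+1)}{2}\cdot\frac{p-1}{2}}.
\]
The second exponent is even for $p\equiv1\pmod 4$ (because $\frac{p-1}{2}$ is even) and also for $p\equiv3\pmod 4$ (because $\frac{p(p+1)}{2}$ is even). The first exponent is odd exactly when $p\equiv3\pmod4$. This yields the stated values; for instance $p=3$ gives $-1$ and $p=5$ gives $1$.

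The only step needing real care is the coefficient extraction and the index bookkeeping, namely the column reversal $j'=p-j$ that produces the anti-triangular shape. Everything else is routine sign arithmetic.
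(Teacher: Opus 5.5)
Your proposal is correct and follows the paper's route: fix $(r,e,d)=(p,p-1,p-1)$, specialize to $f(x)=x^p-x-1$ (where $\Delta(x^p-x-1)^{g/2}=1$), and read off $\det M_{p-1}((x^p-x-1)^{p-1})$ from its upper-left anti-triangular shape. You check explicitly that every anti-diagonal entry equals $1$, so $\det M$ is just the reversal sign $(-1)^{(p-1)(p-2)/2}$; this is a slightly more transparent form of the paper's $\det M=\prod_{0\le m\le p-2}\binom{p-1}{m}$ and gives the same value.
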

\begin{proof}
By the condition of ${\mathscr B}_0(p)$, $p \mid r$ implies that $r=p$, $e=p-1$, $d=p-1$.
We put $M=M_{d}((x^p - x -1)^{p-1})$.
We have
\[
(x^p - x - 1)^{p-1} = \sum_{0 \le m \le p-1} \binom{p-1}{m} x^{mp} (-x-1)^{p-1-m}.
\]
The matrix $M$ is of the form
\[
M = \begin{pmatrix}
\ast & \cdots & \ast \\
\vdots & \reflectbox{$\ddots$} \\
\ast
\end{pmatrix}.
\]
By this, we obtain
\[
\det M = \binom{p-1}{0} \binom{p-1}{1} \cdots \binom{p-1}{p-2} = \begin{cases}
1 & p=1,\ 2 \bmod 4 \\
-1 & p=3 \bmod 4
\end{cases}.
\]
By (\ref{Deltaxrx1}) and $g=p-1$, we have $\Delta(x^p-x-1)^{g/2}=1$.
Hence, we obtain the assertion, which fits into (\ref{epsilonForB0p}).
\end{proof}
%

%
This completes the proof of Theorem 1.
%
%
%
%
%
\section{Proof of ${\mathscr A}(p)={\mathscr B}(p)$ in some cases}
%
%
\subsection{The superset ${\mathscr U}(p)$ of ${\mathscr A}(p)$, Proof of Theorem 2}
%
%
For a prime $p$, we put
\[
{\mathscr U}(p) = \left\{
(r,e,d)\in\Z^3 \left|
\begin{array}{l}
r \ge 2,\ e \ge 1,\ 1 \le d \le p,\\
d(p-1) \le re \le r(p-1), \\
g >0, \\  
g \in2\Z \text{ when $p\ne 2$ },\ g \in \Z \text{ when $p=2$}
\end{array}
\right.
\right\},
\]
where $g$ is defined by (\ref{gdef}).
We remark that $e \le p-1$ and $d \le r$ hold for $(r,e,d)\in{\mathscr U}(p)$.
%

%
%
\begin{lemma}
For a prime $p$, we have ${\mathscr U}(p) \supset {\mathscr A}(p)$.
\end{lemma}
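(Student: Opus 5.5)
The conditions defining ${\mathscr U}(p)$ will be checked one at a time for $(r,e,d)\in{\mathscr A}(p)$. The conditions $r\ge2$, $e\ge1$, $1\le d\le p$ hold by definition, and so does $g>0$, where $g$ is forced to be the value (\ref{gdef}) by comparing total degrees in $x_1,\ldots,x_r$ exactly as in the Introduction.

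\emph{Lower bound $d(p-1)\le re$.} The last row of $M_d(f(x)^e)$ consists of $c_{dp-d},\ldots,c_{dp-1}$, and $f(x)^e$ has degree $re$. If $re<dp-d$, this row is zero and $\det M_d(f(x)^e)=0$, which contradicts $\det M_d(f(x)^e)=\varepsilon_{p,r,e,d}\,\delta(x_1,\ldots,x_r)^g$ with $\varepsilon_{p,r,e,d}\neq0$. This is the remark made just after the definition of ${\mathscr D}(p)$.

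\emph{Parity of $g$ when $p\ne2$.} The entries $c_i$ are polynomials in $s_1,\ldots,s_r$, so $\det M_d(f(x)^e)$ is symmetric in $x_1,\ldots,x_r$. The transposition $x_1\leftrightarrow x_2$ sends $\delta$ to $-\delta$. Hence $\varepsilon_{p,r,e,d}\,\delta^g=(-1)^g\varepsilon_{p,r,e,d}\,\delta^g$, and since $\delta\ne0$, $\varepsilon_{p,r,e,d}\ne0$ and $p\ne2$, we get $g\in2\Z$.

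\emph{Upper bound $e\le p-1$, i.e.\ $re\le r(p-1)$.} This is the main obstacle. Suppose $e\ge p$ and write $e=pq+e'$ with $q\ge1$ and $0\le e'\le p-1$. Then $f(x)^e=F(x^p)\,f(x)^{e'}$, where $F(y)=f^{(p)}(y)^q=\sum_k a_k y^k$ and $f^{(p)}$ has coefficients $s_i^p$. Writing $f(x)^{e'}=\sum b_m x^m$, we get $c_{ip-j}=\sum_k a_k\, b_{(i-k)p-j}$. Thus $M_d(f(x)^e)$ is a banded Toeplitz combination, with coefficients in $\F_p[s_1^p,\ldots,s_r^p]$, of $p\times p$ blocks of coefficients of $f^{e'}$. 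My plan is to use this structure to exhibit a specialization of $x_1,\ldots,x_r$ with a repeated root, so that $\delta=0$, at which $\det M_d(f(x)^e)\neq0$. That contradicts $\det M_d(f(x)^e)=\varepsilon_{p,r,e,d}\,\delta^g$ with $g>0$.

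I would first try specializations for which, as in the proof of Proposition \ref{epsilon_formula1}, each row of $M_d$ has a single nonzero entry, such as $f(x)=(x^{r-1}-1)(x-1)$ or $f(x)=x^{r-2}(x-1)^2$ adjusted by a unit, with $p\nmid$ the relevant binomial coefficients. The choice is guided by the decomposition above: the factor $F(x^p)$ contributes only shifts by multiples of $p$.

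If no clean specialization works, the fallback is a degree/weight argument. Since $a_k\in\F_p[s^p]$, the determinant then has a rigid $p$-power structure. Comparing it with $\delta^g$ (where $g=(red-\frac{d(d+1)}{2}(p-1))/\frac{r(r-1)}{2}$) via the exponent of $(x_1-x_2)$ would force $g$ to be incompatible, e.g.\ to be divisible by $p$ in a way that contradicts the degree count. I expect this case analysis for $e\ge p$ to be the delicate part.
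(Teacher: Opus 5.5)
Your checks of $r\ge2$, $e\ge1$, $1\le d\le p$, $g>0$, the lower bound $d(p-1)\le re$, and the parity of $g$ are correct and agree with the paper. The gap is the upper bound $e\le p-1$. You write down the right decomposition but never derive a contradiction from it, and the plan you propose runs in the wrong direction.

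A specialization with a repeated root at which $\det M_d(f(x)^e)\neq0$ cannot exist in general, because for some triples with $e\ge p$ the determinant vanishes identically. Take $e=p$. Then $f(x)^p=f^{(p)}(x^p)$, so $c_{ip-j}$ with $1\le j\le d\le p$ vanishes unless $j=p$. Hence $M_d(f(x)^p)$ has at most one nonzero column, and $\det M_d(f(x)^p)=0$. Such triples are trivially outside ${\mathscr A}(p)$, but your plan has no way of handling them. Similarly, your candidate $x^{r-2}(x-1)^2$ with $r\ge3$, taken literally, has $s_r=0$, which by the factorization below forces the determinant to vanish. The ``$p$-power structure'' fallback is not an argument as it stands.

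The missing observation is that your formula $c_{ip-j}=\sum_k a_k\,b_{(i-k)p-j}$ is a matrix factorization. Since $1\le j\le d\le p$, the index $(i-k)p-j$ is negative for $k\ge i$, so only $0\le k\le i-1$ contribute. For those $k$, $b_{(i-k)p-j}$ is an entry of row $i-k$ of $M_d(f(x)^{e'})$. Therefore
$M_d(f(x)^e)=T\,M_d(f(x)^{e'})$, where $T=(a_{i-i'})_{1\le i,i'\le d}$ is lower triangular with diagonal $a_0=F(0)=s_r^{pq}$. It follows that $\det M_d(f(x)^e)=s_r^{pqd}\det M_d(f(x)^{e'})$ is divisible by $s_r=(-1)^r x_1\cdots x_r$.

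On the other hand, $x_1\nmid\delta(x_1,\ldots,x_r)$, because $\delta(0,x_2,\ldots,x_r)\neq0$. Since $x_1$ is prime, $x_1\nmid\varepsilon_{p,r,e,d}\,\delta(x_1,\ldots,x_r)^g$, which is a contradiction. This is exactly the paper's argument, stated there with $q=1$: $M_d(f(x)^e)=T\,M_d(f(x)^{e-p})$ with $s_r^p$ on the diagonal of $T$. In terms of specializations, the useful one has a root at $0$ and otherwise distinct roots, so that $\delta\neq0$ but $\det M_d(f(x)^e)=0$. That is the opposite of what you were searching for.
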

\begin{proof}
For $(r,e,d)\in{\mathscr A}(p)$,
the power $\delta(x_1,\ldots,x_r)^g$ is symmetric in $x_1$, \ldots, $x_r$.
When $p \ne 2$, this implies that $g$ is even.

The lowest row of $M_{d}(f(x)^e)$ contains the coefficients of $x^{dp-d}$, \ldots, $x^{dp-1}$.
Since $\det M_{d}(f(x)^e) \ne 0$, we should have $d(p-1) \le re$.

If $e \ge p$, we have
\[
M_{d}(f(x)^e) =
\begin{pmatrix}
s_r^p \\
s_{r-1}^p & s_r^p \\
\vdots & \ddots & \ddots \\
s_{r-d+1}^p & \cdots & s_{r-1}^p & s_r^p
\end{pmatrix}
M_{d}(f(x)^{e-p}).
\]
Here, we have $f(x)=x^r+s_1 x^{r-1}+\cdots+s_{r}$, and
suppose $s_0=1$ and $s_i=0$ for $i<0$.
For $(r,e,d)\in{\mathscr A}(p)$, if $e \ge p$, this equality implies that $x_1 \cdots x_r \mid \delta(x_1,\ldots,x_r)$.
This is a contradiction.
Hence, for $(r,e,d)\in{\mathscr A}(p)$, we should have $e \le p-1$.
\end{proof}
%

%
%
%
%
%
\begin{lemma}\label{UinB1}
\phantom{This is a phantom line.}
\begin{enumerate}
\item $\{ (r,e,d)\in{\mathscr U}(p)\ |\ d=r \} = {\mathscr B}_{+} (p)$ 
\item $\{ (r,e,d)\in{\mathscr U}(p)\ |\ d=r-1 \} = {\mathscr B}_0 (p)$
\item $\{ (r,e,d)\in{\mathscr U}(p)\ |\ r=2 \} = \{ (r,e,d)\in{\mathscr B}(p)\ |\ r=2\}$ 
\end{enumerate}
\end{lemma}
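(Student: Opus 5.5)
The proof is a direct computation from the definition of ${\mathscr U}(p)$ and formula (\ref{gdef}) for $g$. In each case I will substitute the given relation between $r$ and $d$ into the inequality $d(p-1)\le re\le r(p-1)$ and into $g$, and then compare the result with the defining conditions of ${\mathscr B}_{+}(p)$ and ${\mathscr B}_0(p)$. Two facts are used throughout. First, $d(p-1)\le re\le r(p-1)$ gives $e\le p-1$ and $d\le r$, as already remarked. Second, when $p$ is odd, $p-1$ is even, and this settles the parity condition on $g$.

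For (1), put $d=r$. The chain $r(p-1)\le re\le r(p-1)$ forces $e=p-1$. Then (\ref{gdef}) gives
\[
g=\frac{2re-(r+1)(p-1)}{r-1}=p-1,
\]
which is positive and is even when $p\ne2$. The remaining conditions $1\le d\le p$ and $r\ge2$ become $2\le r\le p$. Hence the left side is exactly $\{(r,p-1,r)\mid 2\le r\le p\}={\mathscr B}_{+}(p)$. The converse inclusion follows from the same computation read backwards.

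For (2), put $d=r-1$. Then $red-\frac{d(d+1)}{2}(p-1)=\frac{r(r-1)}{2}\{2e-(p-1)\}$, so $g=2e-(p-1)$. This $g$ is always an integer, and it is even for odd $p$. The condition $g>0$ is equivalent to $e>(p-1)/2$. Next, $(r-1)(p-1)\le re$ is equivalent to $r(p-1-e)\le p-1$, and $re\le r(p-1)$ is equivalent to $e\le p-1$. Finally, $1\le d\le p$ together with $r\ge2$ is equivalent to $2\le r\le p+1$. The condition $e\ge1$ is implied by $e>(p-1)/2$. These are precisely the defining conditions of ${\mathscr B}_0(p)$, so the two sets coincide.

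For (3), take $r=2$. Since $d\le r$, we have $d\in\{1,2\}$, and these two cases are covered by (1) and (2), i.e. $d=r$ and $d=r-1$. On the other side, ${\mathscr B}_{-}(p)$ contains no element with $r=2$, because that would require $d=r-2=0$, contradicting $1\le d$ in the definition of ${\mathscr B}_{-}(p)$ via ${\mathscr D}_2(p)$. Therefore both sides of (3) equal the part of ${\mathscr B}_{+}(p)\cup{\mathscr B}_0(p)$ with $r=2$.

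None of these steps is a real obstacle. The only points needing care are the algebraic simplification of $g$ in each case, and, in (2), checking that every inequality of ${\mathscr B}_0(p)$, in particular the bound $r\le p+1$ coming from $d\le p$, is matched exactly in both directions.
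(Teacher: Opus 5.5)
Your proof is correct and is exactly the direct verification from the definition of ${\mathscr U}(p)$ and the formula for $g$ that the paper dismisses as straightforward. One small point: the Introduction's definition of ${\mathscr B}_{-}(p)$ does not itself impose $d\ge 1$, so the cleaner reason it has no element with $r=2$ is that $2(p-1-e)=p-1$ forces $e=(p-1)/2$, which contradicts $e>(p-1)/2$.
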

\begin{proof}
It is straightforward to verify these assertions.
\end{proof}
%

%
%
%
\begin{lemma}\label{UinB2}
\phantom{This is a phantom line.}
\begin{enumerate}
\item
$
\left\{ (r,e,d)\in{\mathscr U}(p) \left|\  p \mid r,\ \det M_{d}((x^r - x -1)^e) \ne 0\right.\right\}
=
\left\{ (r,e,d)\in{\mathscr B}(p) \left|\ p \mid r\right.\right\}
$
%
\item
$
\left\{ (r,e,d)\in{\mathscr U}(p) \left|\ p < r,\ p \nmid r,\ \det M_{d}((x^r-1)^e) \ne 0\right.\right\}
=
\left\{ (r,e,d)\in{\mathscr B}(p) \left|\ p < r,\ p \nmid r\right.\right\}
$
\item 
$
\left\{ (r,e,d)\in{\mathscr U}(p) \left|\ 2 \le r \le p-1,\ r \nmid p-1,\ \det M_{d}((x^r-1)^e) \ne 0\right.\right\}
$
\\
$=
\left\{ (r,e,d)\in{\mathscr B}(p) \left|\ 2 \le r \le p-1,\ r \nmid p-1\right.\right\}
$
\end{enumerate}
\end{lemma}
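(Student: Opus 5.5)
We prove each of the three equalities by two inclusions. The inclusion $\supseteq$ is the same in all three parts. By Theorem 1 we have ${\mathscr B}(p)\subset{\mathscr A}(p)$, and by the preceding Lemma ${\mathscr A}(p)\subset{\mathscr U}(p)$. For $(r,e,d)\in{\mathscr B}(p)$, Lemma \ref{lemma3} together with Propositions \ref{epsilon_formula1} and \ref{epsilon_formula2} gives $\det M_d(f(x)^e)=\varepsilon_{p,r,e,d}\,\delta^{2e-(p-1)}$ with $\varepsilon_{p,r,e,d}\in\F_p^\times$. Specializing $f$ to $x^r-1$ when $p\nmid r$, or to $x^r-x-1$ when $p\mid r$, gives a separable polynomial: its discriminant is $\pm r^r$, resp. $\pm1$, by (\ref{Deltaxr1}), (\ref{Deltaxrx1}). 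Hence the specialized determinant is non-zero.

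For $\subseteq$ in parts (2) and (3), put $M=M_d((x^r-1)^e)$. Its $(i,j)$ entry is non-zero only if $r\mid ip-d-1+j$, and then the exponent is $m r$ with $0\le m\le e\le p-1$, so $\binom{e}{m}\ne0$. Since $d\le r$ for elements of ${\mathscr U}(p)$, and $d\le p<r$ in part (2), each row meets at most one multiple of $r$. Hence $\det M\ne0$ forces $M$ to be a monomial matrix, whose support is a permutation matrix $\widetilde M$. Exactly as in the proof of Proposition \ref{epsilon_formula1}, we get $\sigma(i+1)-\sigma(i)=(m_{i+1}-m_i)r-p$ with absolute value at most $d-1\le r-1$. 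Put $h=p\bmod r$ and $k=r-h$, so that $\gcd(h,k)=1$ and $h,k\ge1$ because $p\nmid r$. Then $\widetilde M$ is a PPM of type $(h,k)$ of size $d\le r=h+k$, and Proposition \ref{PPMclassification} restricts $d$.

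When $h,k\ge2$, Proposition \ref{PPMclassification} forces $d\in\{1,r-1,r\}$. We rule out $d=1$ directly: the entry $c_{p-1}$ of $(x^r-1)^e$ vanishes because $r\nmid p-1$, which holds in part (3) by hypothesis and in part (2) because $r>p$. For $d=r$ we invoke Lemma \ref{UinB1}(1), and for $d=r-1$ Lemma \ref{UinB1}(2); this lands us in ${\mathscr B}_+(p)$ or ${\mathscr B}_0(p)$. In part (2) the constraint $d\le p<r$ excludes $d=r$ and forces $r=p+1$.

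The remaining cases are $h=1$, which means $r\mid p-1$ and is excluded in (3), and $k=1$, which means $r\mid p+1$ and includes $r=p+1$ in (2). Here Proposition \ref{PPMclassification} no longer restricts $d$, and we must also recover ${\mathscr B}_-(p)$, where $d=r-2$. I expect this to be the main obstacle. For $k=1$ the only size-$d$ PPMs with $d<r$ are ${\rm I}_d$ and its reversed companion built from $B_m$. We would match these against the explicit row positions $m_i=\lceil (ip-d)/r\rceil$. The identity-type pattern requires the window $[ip-d,ip-1]$ to hit a multiple of $r$ at the same offset for every $i$, which with $p\equiv-1\pmod r$ pins down $d$ in terms of $r$. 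We then combine this with the ${\mathscr U}(p)$ conditions $d(p-1)\le re\le r(p-1)$ and $g>0$ even, where $g$ is given by (\ref{gdef}). This should yield either $d=r-1$, or $d=r-2$ with $r(p-1-e)=p-1$, that is, ${\mathscr B}_0(p)$ or ${\mathscr B}_-(p)$.

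For part (1), $p\mid r$, the conditions $d\le p\le r$ and $d(p-1)\le re\le r(p-1)$ leave few possibilities. Expand $(x^r-x-1)^e=\sum_m\binom{e}{m}x^{mr}(-x-1)^{e-m}$, and note that $x^{mr}$ is a $p$-th power, so it contributes only to the $m$-th block of residues. We would show that $M_d((x^r-x-1)^e)$ is anti-triangular or has a zero row unless $r=p$ and $d\in\{p-1,p\}$ with $e=p-1$. Comparing with ${\mathscr B}(p)$ via Lemma \ref{UinB1} then finishes the argument.
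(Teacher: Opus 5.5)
\textbf{Part (1) has a genuine gap.} Your inclusion $\supseteq$ is sound, and so is your treatment of parts (2) and (3) when $h,k\ge 2$.

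In part (1) you plan to show that $M_d((x^r-x-1)^e)$ ``is anti-triangular or has a zero row'' unless $r=p$, $d\in\{p-1,p\}$ and $e=p-1$. That cannot make the determinant vanish, because the anti-triangular matrices that occur here are invertible. Take $r=p$ and $e=p-1$:
\begin{itemize}
\item The exponent $ip-d-1+j$ lies in $[(i-1)p,ip-1]$, so it receives a contribution only from the term $m=i-1$.
\item The $(i,j)$ entry is therefore $\pm\binom{p-1}{i-1}\binom{p-i}{p-d-1+j}$.
\item This vanishes for $i+j>d+1$ and equals $\pm\binom{p-1}{i-1}\ne 0$ on the anti-diagonal.
\end{itemize}
Hence $\det M_d((x^p-x-1)^{p-1})\ne 0$ for \emph{every} $1\le d\le p$. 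Likewise, for $r=\ell p$ with $\ell\ge 2$, $e=p-1$ and $d=1$, you get $M_1=(c_{p-1})=(\pm 1)$. The inequalities $d(p-1)\le re\le r(p-1)$ exclude nothing once $e=p-1$ and $d\le p\le r$.

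The missing ingredient is the integrality of $g$ in the definition of ${\mathscr U}(p)$, which your sketch of part (1) never uses. The matrix structure only gives you two things:
\begin{itemize}
\item $e=p-1$, since otherwise the last column vanishes;
\item $d=1$ when $\ell\ge 2$, since otherwise the second row vanishes.
\end{itemize}
Then (\ref{gdef}) does the rest. For $\ell\ge 2$ it gives $g=2(p-1)/(\ell p)\in(0,1)$, which is impossible. For $\ell=1$ it gives $g=2d-d(d+1)/p$, so $p\mid d(d+1)$, i.e.\ $d\in\{p-1,p\}$. This is how the paper concludes.

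\textbf{The case $k=1$ in parts (2) and (3).} This case is not the obstacle you expect, and Proposition \ref{PPMclassification} does restrict $d$ there.
\begin{itemize}
\item If $d<r=h+1$, then $\sigma(i)\le d\le h$ rules out every left move, so $\widetilde M={\rm I}_d$. This is the situation where the two listed matrices in the classification coincide.
\item $\widetilde M={\rm I}_d$ gives $r\mid p-d$, and $k=1$ gives $r\mid p+1$. Together these give $r\mid d+1$, hence $d=r-1$, and Lemma \ref{UinB1}(2) applies.
\item The remaining case $d=r$ is Lemma \ref{UinB1}(1).
\end{itemize}
No appeal to $g$ is needed here. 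There is also no ${\mathscr B}_-(p)$ to recover: $r(p-1-e)=p-1$ forces $r\mid p-1$, which is excluded in (3) and impossible in (2) since $r>p$.

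Your remark that part (2) with $h,k\ge 2$ ``forces $r=p+1$'' is inconsistent, since $r=p+1$ means $k=1$; that subcase is simply empty.

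For comparison, the paper handles $\widetilde M={\rm I}_d$ separately in (3) in exactly this way before applying the classification with $h\ge 2$. It proves (2) by direct counting instead: a zero first row forces $d=p$, the need for $p$ non-zero entries forces $e=p-1$, and $(p-1)r\le p^2-1$ forces $r=p+1$.
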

\begin{proof}
(i)
For $r \ge 2$ such that $p\mid r$, we put $f(x)=x^r-x-1$.
Then, by (\ref{Deltaxrx1}), we have $\Delta(f(x)) \ne 0$.
Hence, the right hand side of the assertion is contained in the left hand side.
Conversely, we take $(r,e,d)\in{\mathscr U}(p)$ such that $p \mid r$ and $\det M_{d}(f(x)^e) \ne 0$.
We put $r = \ell p$.
We have
\[
f(x)^e = \sum_{0 \le m \le e} (-1)^{e-m} \binom{e}{m} x^{m \ell p} (x+1)^{e-m}.
\]
By $(r,e,d)\in{\mathscr U}(p)$, we have $e \le p-1$.
Since we have $\det M_d(f(x)^e) \ne 0$, 
the rightmost column of $M_{d}(f(x)^e)$ is not a zero vector.
This implies $e=p-1$. 

First, we assume $\ell \ge 2$.
If $d\ge 2$, then the second row of $M_{d}(f(x)^e)$ is zero.
This contradicts $\det M_{d}(f(x)^e) \ne 0$.
Hence, we should have $d=1$.
Now, we have
\[
g = \frac{2}{\ell} \cdot \frac{p-1}{p} < 1.
\]
Hence that $\ell \ge 2$ is contradictory.

Second, we assume $\ell = 1$, i.e., $r=p$.
Then, we have
\[
g = 2d-\frac{d(d+1)}{p}.
\]
Hence, we obtain $d=p$ or $d=p-1$, i.e., $(r,e,d) = (p,p-1,p)$ or $(p,p-1,p-1)$.
These are contained in ${\mathscr B}_{+}(p)$ and ${\mathscr B}_{0}(p)$ respectively.
\noindent
(ii)
For $r \ge 2$ such that $p \nmid r$, we put $f(x)=x^r-1$.
Then, by (\ref{Deltaxr1}), we have $\Delta(f(x)) \ne 0$.
Hence, the right hand side of the assertion is contained in the left hand side.
Conversely, we take $(r,e,d)\in{\mathscr U}(p)$ such that $p<r$, $p \nmid r$ and $\det M_{d}(f(x)^e) \ne 0$.
We have
\begin{equation}\label{expandxrminus1}
f(x)^e = \sum_{0 \le m \le e} (-1)^{e-m} \binom{e}{m} x^{m r}.
\end{equation}
If $d < p$, then the first row of $M_{d}(f(x)^e)$ is zero.
Hence, we should have $d=p$.
Since $\det M_{d}(f(x)^e)\ne 0$ and $M_{d}(f(x)^e)$ is a $p \times p$ matrix,
$M_{d}(f(x)^e)$ has at least $p$ non-zero components.
Hence, we have $e+1 \ge p$, i.e., $e=p-1$. 
Since all the exponents $0$, $r$, $2r$, \ldots , $er$ lie in $M_{d}(f(x)^e)$, we have
$re \le p^2-1$, i.e., $r=p+1$.
Thus, we obtain $(r,e,d)=(p+1,p-1,p) \in {\mathscr B}_0 (p)$.
\noindent
(iii)
By the same reason as above for $f(x)=x^r-1$, the right hand side of the assertion is contained in the left hand side.
Conversely, we take $(r,e,d)\in{\mathscr U}(p)$ such that $2 \le r \le p-1$, $r \nmid p-1$ and $\det M_{d}(f(x)^e)\ne 0$.
We have (\ref{expandxrminus1}).
Since, for $0 \le m,\ m' \le e$, we have
\[
m r = m' r \bmod p \Longrightarrow m = m' \bmod p \Longrightarrow m=m',
\]
each column of the matrix $M_{d}(f(x)^e)$ has at most one non-zero component. 
Since $\det M_{d}(f(x)^e) \ne 0$, each column and each row of $M_{d}(f(x)^e)$ contain exactly one non-zero component.
The matrix $M_{d}(f(x)^e)$ gives a permutation $\sigma$ and a permutation matrix $\widetilde{M}$.
Hence, for each $1 \le i \le d$, we have $m_i$ such that $0 \le m_i \le e$ and $ip-d \le m_i r \le ip-1$.
The permutation $\sigma$ of $\{1, \ldots, d\}$ is given by
\[
\sigma(i) = m_i r - (ip-d)+1\ \ \  (1 \le i \le d).
\]
We write as $f(x)^e=\sum_{i\ge 0} c_i x^i$.
Then, by $r \nmid p-1$, we have $c_{p-1}=0$.
By $\det M_{d}(f(x)^e) \neq 0$, we have $d \ge 2$.
We also have $\widetilde{M} \ne {\rm J}_d$.

First, we assume $\widetilde{M} = {\rm I}_d$.
Then, since $d \ge 2$, we have 
\[
r \mid p-d \text{\ \ and\ \ } r \mid 2p-d+1.
\]
These imply $r \mid d+1$. 
Hence, we obtain $d=r-1$.
By Lemma \ref{UinB1}, we obtain $(r,e,d)\in{\mathscr B}_0(p)$.

Second, we assume $\widetilde{M} \ne {\rm I}_d$.
We define $h$ and $k$ by
\[
h = p \bmod r,\ \ 0 \le h \le r-1,\ \ k = -p \bmod r,\ \ 0 \le k \le r-1.
\]
Then, we have
\[
h \ge 2,\ \ k \ge 1,\ \ \gcd(h,k)=1,\ \ h+k=r.
\]
Since we have
\[
\sigma(i+1) - \sigma(i) = (m_{i+1} - m_i) r - p = -p \pmod r,
\]
and
\[
|\sigma(i+1)-\sigma(i)| \le d-1 \le r-1,
\]
we obtain
\[
\sigma(i+1) - \sigma(i) = -h \text{ or } k.
\]
Thus, $\widetilde{M}$ is a PPM of type $(h,k)$ of size $d$.
Since we have $\widetilde{M} \ne {\rm J}_d, {\rm I}_d$ and $2 \le d \le r = h+k$, $h \ge 2$, by Proposition \ref{PPMclassification}, 
we obtain $d=r$ or $d=r-1$.
Hence, by Lemma \ref{UinB1}, we obtain $(r,e,d)\in{\mathscr B}(p)$.
\end{proof}

From Lemma \ref{UinB1} (iii) and Lemma \ref{UinB2}, Theorem 2 follows.
%
%
%
\subsection{Proof of Theorem 3}
%
%
\begin{lemma}\label{toRho1}
For $(r,e,d)\in{\mathscr U}(p)$ such that $r \mid p-1$, we have
\begin{align*}
\det M_d((x^r-1)^e) &= (-1)^{d(d-1)/2 + (r-1) g/2}\binom{e}{s}\binom{e}{2s}\cdots\binom{e}{ds}.
\end{align*}
Here, we put $rs=p-1$.
In particular, we have $\det M_d((x^r-1)^e) \ne 0$.
\end{lemma}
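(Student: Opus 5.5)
The approach is to show that, for $f(x)=x^r-1$ with $r\mid p-1$, the matrix $M_d((x^r-1)^e)$ is anti-diagonal; the determinant can then be read off directly. Write $rs=p-1$ and use the expansion (\ref{expandxrminus1}),
\[
(x^r-1)^e=\sum_{0\le m\le e}(-1)^{e-m}\binom{e}{m}x^{mr}.
\]
This says $c_N\neq 0$ only if $r\mid N$, and then $c_{mr}=(-1)^{e-m}\binom{e}{m}$ for $0\le m\le e$. The $(i,j)$-entry of $M_d$ is $c_N$ with
\[
N=ip+j-d-1=irs+(i+j-d-1),
\]
because $ip-i=i(p-1)=irs$.

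\emph{Key step: locating the nonzero entries.} Recall that $d\le r$ for $(r,e,d)\in{\mathscr U}(p)$ (remarked after the definition of ${\mathscr U}(p)$). For $1\le i,j\le d$ we have $|i+j-d-1|\le d-1\le r-1$. Hence $r\mid N$ forces $i+j-d-1=0$, that is, $j=d+1-i$. In that case $N=isr$, so the anti-diagonal entry is $c_{isr}$, equal to $(-1)^{e-is}\binom{e}{is}$ provided $is\le e$. This proviso holds for every $i\le d$: the defining inequality $d(p-1)\le re$ of ${\mathscr U}(p)$ gives $ds\le e$. Therefore
\[
M_d((x^r-1)^e)={\rm J}_d\cdot\mathrm{diag}\bigl((-1)^{e-is}\tbinom{e}{is}\bigr)_{1\le i\le d},
\]
in the sense that the only nonzero entries sit on the anti-diagonal, with the row-$i$ entry equal to $(-1)^{e-is}\binom{e}{is}$.

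\emph{Determinant and sign.} Since $\det {\rm J}_d=(-1)^{d(d-1)/2}$, we get
\[
\det M_d((x^r-1)^e)=(-1)^{d(d-1)/2}\,(-1)^{\sum_i(e-is)}\prod_{1\le i\le d}\binom{e}{is},
\qquad \sum_{1\le i\le d}(e-is)=de-s\frac{d(d+1)}{2}.
\]
It remains to match this exponent with $(r-1)g/2$. In (\ref{gdef}), substitute $p-1=rs$ into $d(d+1)(p-1)/2$ to get
\[
g=\frac{2}{r-1}\Bigl(de-s\frac{d(d+1)}{2}\Bigr),
\qquad\text{so}\qquad
\frac{(r-1)g}{2}=de-s\frac{d(d+1)}{2}
\]
exactly. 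This yields the stated formula.

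\emph{Nonvanishing.} For each $i$ we have $0\le is\le ds\le e\le p-1$, so $\binom{e}{is}$ is a product of factors with no factor $p$ in the numerator, and hence is nonzero in $\F_p$. The case $p=2$ cannot occur, since $r\ge2$ cannot divide $1$.

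The only delicate point is the first step. One has to use both $d\le r$ (so that no index off the anti-diagonal can be a multiple of $r$) and $ds\le e$ (so that every anti-diagonal index actually lies within the support of the expansion). Both come from the defining conditions of ${\mathscr U}(p)$; the rest is bookkeeping.
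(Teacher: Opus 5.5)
Your proof is correct and follows the paper's argument: the paper simply asserts that $M_d((x^r-1)^e)$ is the anti-diagonal matrix whose row-$i$ entry is $(-1)^{e-is}\binom{e}{is}$. You supply what it leaves implicit, namely $d\le r$ to rule out off-anti-diagonal entries, $ds\le e$ to keep the anti-diagonal ones inside the expansion, and the check that the sign exponent equals $(r-1)g/2$. One cosmetic slip: writing $a_i$ for the row-$i$ entry, which sits in column $d+1-i$, the matrix is $\mathrm{diag}(a_1,\ldots,a_d)\,{\rm J}_d$ rather than ${\rm J}_d\,\mathrm{diag}(a_1,\ldots,a_d)$, which does not change the determinant.
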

\begin{proof}
By $r|p-1$, we have $p \ge 3$.
The matrix $M_d((x^r-1)^e)$ is equal to the following counter diagonal matrix.
\begin{equation*}
M_d((x^r-1)^e) =
\begin{pmatrix}
{} & {} & {} & (-1)^{e-s}\binom{e}{s} \\
{} & {} & (-1)^{e-2s}\binom{e}{2s} \\
{} & \reflectbox{$\ddots$} \\
(-1)^{e-ds}\binom{e}{ds}
\end{pmatrix}
\end{equation*}
\end{proof}
%

%
In what follows, to obtain $\mathscr{A}(p)=\mathscr{B}(p)$ for $p<200000$, we apply the successive testing
\begin{equation}\label{SuccessiveTest}
\det M_d(f(x)^e) = \frac{\det M_d((x^r-1)^e)}{\Delta(x^r-1)^{g/2}} \Delta(f(x))^{g/2}
\end{equation} 
by the test polynomials as follows.
\begin{align*}
&f(x) = x^r - x  \\
&f(x) = x^r + x^k + 1  \ (r > k > 0) \\
&f(x) = x^r + x^k + x  \ (r > k > 1) \\
&f(x) = x^r + a x + b  \ (a=1,\ b=2,\ 3)
\end{align*}
%
%

%
First, we shall consider the testing by $f(x)=x^r-x$.
We remark that 
\begin{equation}
\Delta(x^r-x)=(-1)^{(r+1)(r+2)/2} (r-1)^{r-1}, \label{Deltaxrx}
\end{equation}
which is non-zero.
Suppose we have $(r,e,d)\in{\mathscr U}(p)$ such that $r \mid p-1$ and $\det M_d((x^r-x)^e) \ne 0$.
By $r\mid p-1$, we have $p \ge 3$.
We have
\begin{equation*}
(x^r-x)^e = \sum_{0 \le m \le e} (-1)^{e-m} \binom{e}{m} x^{e+m(r-1)}.
\end{equation*}
Since, for $0 \le m,\ m' \le e$, we have
\[
e + m(r-1) = e + m'(r-1) \bmod p \Longrightarrow m = m' \bmod p \Longrightarrow m=m',
\]
each column of the matrix $M_d((x^r-x)^e)$ has at most one non-zero component. 
Since we assumed $\det M_d((x^r-x)^e) \ne 0$, each column and each row of $M_d((x^r-x)^e)$ contain exactly one non-zero component.
Therefore, for $i$ ($1 \le i \le d$), we have $0\le m_i \le e$ such that $ip-d \le e+m_i (r-1) \le ip-1$.
The matrix $M_d((x^r-x)^e)$ gives a permutation $\sigma$ and a permutation matrix $\widetilde{M}$.
We have
\begin{equation}
\sigma(i) = e + m_i (r-1) - (ip-d) + 1\ \ \ (1\le i \le d). \label{sigmaixrx}
\end{equation}
We consider the disjoint union
\[
\{ (r,e,d) \in {\mathscr U}(p)\ |\ r \mid p-1,\ \det M_d((x^r-x)^e) \ne 0 \} = \bigcup\limits_{j=1}^{4} {\mathscr C}_j(p),
\]
where ${\mathscr C}_1(p)$, ${\mathscr C}_2(p)$, ${\mathscr C}_3(p)$, ${\mathscr C}_4(p)$ are defined by the following conditions respectively.
\begin{align*}
{\mathscr C}_1(p) & \qquad d=1 & \\
{\mathscr C}_2(p) & \qquad d\ge 2, \widetilde{M}={\rm J}_d &\\
{\mathscr C}_3(p) & \qquad d\ge 2, \widetilde{M}={\rm I}_d &\\
{\mathscr C}_4(p) & \qquad d\ge 3, \widetilde{M}\ne{\rm J}_d, \widetilde{M}\ne{\rm I}_d &
\end{align*}
%
%
\begin{lemma}\label{C1C2C3}
\begin{enumerate}
%
\item
We have
\begin{align*}
&{\mathscr C}_1(p) = \left\{
(r,e,d) \in \Z^3 \left|
\begin{array}{l}
2 \le r \le p-1,\\
r \mid p-1\ (\text{{\rm we put} $rs=p-1$}), \\
e = s + (r-1) \ell,\ 1 \le \ell \le s, \\
d = 1
\end{array} 
\right.
\right\}. &
\end{align*}
For $(r,e,d)\in{\mathscr C}_1(p)$, we have
\begin{align*}
& \det M_d((x^r-x)^e) = (-1)^{r\;g/2} \binom{e}{s-\ell} . &
\end{align*}
%

%
\item
We have
\begin{align*}
& {\mathscr C}_2(p) = \left\{
(r,e,d) \in \Z^3 \left|
\begin{array}{l}
2 \le r \le p-1,\\
r(r-1) \mid p-1\ (\text{{\rm we put} $r(r-1)t=p-1$}), \\
e = (r-1) \ell,\ dt \le \ell \le rt, \\
2 \le d \le r
\end{array} 
\right.
\right\}
.&
\end{align*}
For $(r,e,d)\in{\mathscr C}_2(p)$, we have
\begin{align*}
& \det M_d((x^r-x)^e) = (-1)^{r\;g/2+d(d-1)/2} {\displaystyle\prod_{1\le i \le d}}\binom{e}{rti-\ell}.& 
\end{align*}
%

%
\item
We have
\begin{align*}
&{\mathscr C}_3(p) = \left\{
(r,e,d) \in \Z^3 \left|
\begin{array}{l}
3 \le r \le p-1, \\
r \mid p-1\ (\text{\rm{we put }} rs=p-1),\\
r-1 \mid p+1\ (\text{\rm{we put }} (r-1)t=p+1), \\
e = (r-1)\ell - (d+1),\ d(t-s) \le \ell \le t, \\
2 \le d \le r-1
\end{array} 
\right.
\right\}. &
\end{align*}
For $(r,e,d)\in{\mathscr C}_3(p)$, we have
\begin{align*}
&\det M_d((x^r-x)^e) = (-1)^{r\:g/2}{\displaystyle\prod_{1\le i \le d}}\binom{e}{ti-\ell} . &
\end{align*}
\end{enumerate}
\end{lemma}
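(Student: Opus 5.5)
Throughout I use the setup preceding the lemma. We have $(r,e,d)\in{\mathscr U}(p)$ with $r\mid p-1$, and $M_d((x^r-x)^e)$ gives a permutation $\sigma$ via (\ref{sigmaixrx}), with $0\le m_i\le e$ and $ip-d\le e+m_i(r-1)\le ip-1$. In each case the plan has three steps: translate the shape of $\widetilde M$ into arithmetic conditions on $e,m_i$; intersect those with the defining conditions of ${\mathscr U}(p)$ ($d(p-1)\le re$, $e\le p-1$, $g>0$ even) to obtain the stated parametrisation; and read off the determinant as $\operatorname{sgn}(\sigma)\prod_i(-1)^{e-m_i}\binom{e}{m_i}$, simplifying the sign. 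For the reverse inclusions I check directly that the parametrised triples lie in ${\mathscr U}(p)$, that the indicated $m_i$ land in the correct windows, and that the binomials are nonzero mod $p$ (all lower indices are $\le e\le p-1$).

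\textbf{Case (i), $d=1$.} Here $p-1\le e+m_1(r-1)\le p-1$, so $e+m_1(r-1)=p-1=rs$. Also $g=(2re-(p-1))/(r(r-1))$. Writing $m_1=s-\ell$ gives $e=s+(r-1)\ell$. The range $1\le \ell\le s$ should come from $m_1\ge 0$ and from $g>0$, which excludes $\ell=0$; indeed $g=(2\ell+\ldots)$-type algebra gives $g=2\ell/r\cdot\ldots$. Parity of $g$ and the bound $e\le p-1$ then need checking. The sign is $(-1)^{e-m_1}=(-1)^{r\ell}$, and I will express this as $(-1)^{rg/2}$ using the closed form of $g$ in terms of $\ell$ (I expect $g=2\ell$ or similar).

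\textbf{Case (ii), $\widetilde M={\rm J}_d$.} We have $\sigma(i)=d+1-i$, so $e+m_i(r-1)=ip-i=i(p-1)$. Consecutive differences give $(m_{i+1}-m_i)(r-1)=p-1$, hence $r-1\mid p-1$. Combined with $r\mid p-1$ and $\gcd(r,r-1)=1$, this yields $r(r-1)\mid p-1$; set $p-1=r(r-1)t$, so $m_{i+1}-m_i=rt$. Then $e\equiv 0 \pmod{r-1}$, say $e=(r-1)\ell$ and $m_i=rti-\ell$. The constraints $m_1\ge0$ and $m_d\le e$, together with $d(p-1)\le re$ and $e\le p-1$, give $dt\le \ell\le rt$ and $d\le r$. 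For the sign, $\operatorname{sgn}{\rm J}_d=(-1)^{d(d-1)/2}$, and $\sum(e-m_i)$ reduces mod 2 to $rg/2$.

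\textbf{Case (iii), $\widetilde M={\rm I}_d$.} We have $\sigma(i)=i$, so $e+m_i(r-1)=ip-d+i-1=i(p+1)-(d+1)$. Differences give $(r-1)t=p+1$, with $m_i=ti-\ell$ and $e=(r-1)\ell-(d+1)$. Since $r-1\mid p+1$ and $r\mid p-1$, we get $r\ge3$ (for $r=2$ the constraints degenerate). The bounds again come from $0\le m_1$, $m_d\le e$, and $d(p-1)\le re$. The latter rewrites via $(r-1)(t-s)=\ldots$ to $\ell\ge d(t-s)$, and $e\le p-1$ gives $\ell\le t$. The sign here is $\operatorname{sgn}(\sigma)=1$ times $(-1)^{\sum(e-m_i)}$.

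\textbf{Main obstacle.} The main obstacle is the bookkeeping in matching the parity $\sum_i(e-m_i)\equiv rg/2$ and in matching the exact ranges of $\ell$ (including $d\le r$, resp.\ $d\le r-1$) with the ${\mathscr U}(p)$ conditions. I will first express $g$ explicitly in terms of $\ell,d,t$ in each case and then compare.
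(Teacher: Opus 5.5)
Your strategy is the paper's: read $e+m_i(r-1)$ off the shape of $\widetilde M$, parametrise by $\ell$, intersect with the conditions of ${\mathscr U}(p)$, and take the sign from ${\rm sgn}\,\sigma\cdot\prod_i(-1)^{e-m_i}$. Case (ii) is fine as written.

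In (i), defining $\ell$ by $m_1=s-\ell$ is a harmless variant of the paper's route, which obtains $\ell$ from $g\in 2\Z$. However, your formula for $g$ is wrong. With $d=1$ one has $g=2\{re-(p-1)\}/(r(r-1))=2(e-s)/(r-1)=2\ell$. This gives $\ell\ge 1$ from $g>0$, makes $g$ automatically even, and turns $(-1)^{e-m_1}=(-1)^{r\ell}$ into $(-1)^{rg/2}$.

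\textbf{The gap is in (iii).} Your justification ``since $r-1\mid p+1$ and $r\mid p-1$, we get $r\ge 3$'' is false: $r=2$ satisfies both divisibilities for every odd $p$. What actually excludes $r=2$ is $d\le r-1$ together with $d\ge 2$. You never prove $d\le r-1$, although it is part of the description of ${\mathscr C}_3(p)$. The missing step, as in the paper, is short:
\begin{itemize}
\item[] ${\mathscr U}(p)$ gives $d(p-1)\le re\le r(p-1)$, hence $d\le r$.
\item[] If $d=r$, this forces $e=p-1$.
\item[] Then $e+m_1(r-1)=p-d=p-r$ gives $m_1=-1$, a contradiction.
\end{itemize}

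The same omission affects your $\ell$-bounds. Using $(r-1)(t-s)=s+2$ and $r(t-s)=t+2$:
\begin{itemize}
\item[] $d(p-1)\le re$ is exactly $\ell\ge d(t-s)-\frac{d-1}{r-1}$;
\item[] $e\le p-1$ is exactly $\ell\le t+\frac{d-1}{r-1}$.
\end{itemize}
These yield $d(t-s)\le \ell\le t$ only once $d\le r-1$ is known. Alternatively, use $m_1\ge 0$, which gives $\ell\le t$ directly, and $m_d\le e$, which gives $\ell\ge d(t-s)-\frac{d-1}{r}$; the latter suffices since $d\le r$. The resulting inequality $d(t-s)\le t$, i.e.\ $d(s+2)\le rs+2$, also rules out $d=r$.

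The parity bookkeeping you flag as the main obstacle does close:
\begin{itemize}
\item[] In (ii), $g=2d\ell-d(d+1)t$ and $\sum_i(e-m_i)=r\bigl(d\ell-\tfrac{d(d+1)}{2}t\bigr)=rg/2$.
\item[] In (iii), $g=2d\ell-d(d+1)(t-s)$ and $\sum_i(e-m_i)=rd\ell-d(d+1)\bigl(1+\tfrac t2\bigr)$, which equals $rg/2$ by $r(t-s)=t+2$.
\end{itemize}
For the reverse inclusions, $g>0$ follows from $\ell\ge dt$ (resp.\ $\ell\ge d(t-s)$) because $d\ge 2$. In (iii), $e\ge 1$ follows from $(r-1)\ell\ge d(s+2)\ge d+2$.
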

\begin{proof}
%
%
(i) Suppose $(r,e,d)\in{\mathscr C}_1(p)$.
We put $rs=p-1$.
Then, we have $g=2(e-s)/(r-1)$.
Since $g\in2\Z$, we put $e=s+(r-1)\ell$ ($\ell\in\Z$).
By $g>0$, we obtain $\ell \ge 1$.
By $e \le p-1$, we obtain $\ell \le s$.
By $e+m_1 (r-1)=p-1$, we have $m_1=s-\ell$.
Then, the assertion follows.
\noindent (ii)
Suppose $(r,e,d)\in{\mathscr C}_2(p)$.
We have
\begin{align*}
e + m_1 (r-1) &= p-1, \\
e + m_2 (r-1) &= 2p-2.
\end{align*}
By these, we have $r-1 \mid p-1$ and $r-1 \mid e$.
We put $r(r-1)t=p-1$ and $e=(r-1)\ell$ ($\ell\in\Z$).
By $re \ge d(p-1)$, we obtain $dt \le \ell$.
By $e\le p-1$, we obtain $\ell\le rt$.
We have $m_i = rti-\ell$.
Then, the assertion follows.
\noindent (iii)
Suppose $(r,e,d)\in{\mathscr C}_3(p)$.
We put $rs=p-1$.
We have
\begin{align*}
e + m_1 (r-1) &= p-d, \\
e + m_2 (r-1) &= 2p-d+1.
\end{align*}
By these, we have $r-1 \mid p+1$ and $r-1 \mid e+d+1$.
We put $(r-1)t=p+1$ and $e=(r-1)\ell-(d+1)$ ($\ell\in\Z$).
We remark that $(r-1)(t-s)=s+2$.

We claim that $d \le r-1$.
If $d=r$, we have $e=p-1$.
This implies $m_1=-1$.
Hence, we should have $d \le r-1$.
In particular, we have $r \ge 3$.

By $e\ge 1$, we obtain $\ell \ge (d+2)/(r-1)$, i.e., $\ell \ge d(t-s)-(d s+d-2)/(r-1)$.
By $d(p-1) \le re$, we obtain $\ell \ge (d s+d+1)/(r-1)$, i.e., $\ell \ge d(t-s)-(d-1)/(r-1)$.
By $e\le p-1$, we obtain $\ell \le (r s+d+1)/(r-1)$, i.e., $\ell \le t+(d-1)/(r-1)$.
We have $g=2 d \ell - d(d+1)(t-s)$.
By $g>0$, we obtain $\ell > (d+1)(t-s)/2$, i.e., $\ell > d(t-s)-(d-1)(t-s)/2$.
By $m_1 \ge 0$, we obtain $\ell \le t$.
By $m_d \le e$, we obtain $\ell \ge (d t+d+1)/r$, i.e., $\ell \ge d(t-s)-(d-1)/r$.
By all of these, we obtain $d(t-s) \le \ell \le t$.
Then, the assertion follows.
\end{proof}
%
%
%
%
%
\begin{lemma}\label{Updeqrm2C4}
\begin{enumerate}
%
\item
\begin{equation*}
\{ (r,e,d) \in {\mathscr U}(p)\ |\ d=r-2 \}
=
\left\{
(r,e,d)\in\Z^3 \left|
\begin{array}{l}
r \ge 3,\ \ r\; |\; p-1\ \ (\text{{\rm we put} $rs=p-1$}), \\
e = (r-1)(s+\ell),\ -\lfloor \frac{s}{r-1} \rfloor + \delta(r,3) \le \ell \le \lfloor \frac{s}{r-1} \rfloor, \\
d = r-2
\end{array}
\right.
\right\}
\end{equation*}

Here, $\lfloor x \rfloor$ denotes the largest integer $n$ such that $n \le x$, and we put $\delta(r,3) = 1$ when $r=3$ and $\delta(r,3)=0$ when $r \ne 3$.

%
\item
For $(r,e,d)\in{\mathscr U}(p)$ such that $d=r-2$, we have
\[
\det M_d((x^r-x)^e) =  (-1)^{r\;g/2} \left[ \frac{-p}{\;r-1\;} \right] \prod_{1 \le i \le d} \binom{e}{m_i},
\]
where we put $m_i = \left\lceil \dfrac{ip-d}{r-1} \right\rceil - s - \ell$.
Here, $\lceil x \rceil$ denotes the least integer $n$ such that $x \le n$. 
In particular, we have $\det M_d((x^r-x)^e) \ne 0$.

%
\item
\begin{equation*}
{\mathscr C}_4(p) = \left\{ (r,e,d) \in {\mathscr U}(p)\ 
\left|
\begin{array}{l}
r \mid p-1,\ \ \ r-2 \le d \le r,\\
(r,e,d)\notin{\mathscr C}_1(p) \cup {\mathscr C}_2(p) \cup {\mathscr C}_3(p) 
\end{array}
\right\}
\right.
\end{equation*}
\end{enumerate}
\end{lemma}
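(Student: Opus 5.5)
The plan is to treat the three parts separately, relying on the specialization $(x^r-x)^e=x^e(x^{r-1}-1)^e$ together with the machinery from the proofs of Proposition \ref{epsilon_formula1} and Lemma \ref{UinB2}.

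For (i), I will simply unwind the definition of ${\mathscr U}(p)$ with $d=r-2$. By (\ref{gdef}),
\[
g=\frac{(r-2)\{2re-(r-1)(p-1)\}}{r(r-1)} .
\]
First I will show that the integrality of $g$ forces $r\mid p-1$ and $(r-1)\mid e$.
\begin{itemize}
\item The factor $\gcd(r-2,r(r-1))$ divides $2$.
\item Together with $g\in 2\Z$ (for $p\ne 2$; if $r\mid p-1$ with $r\ge 3$ then $p\ge 5$ anyway), this should give $r\mid (r-1)(p-1)$, hence $r\mid p-1$.
\item Similarly it should give $(r-1)\mid 2re$, hence $(r-1)\mid e$ after a parity check.
\end{itemize}
I expect this parity bookkeeping to be the fiddliest step. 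I would first try splitting on the parity of $r$, and if necessary use that $\gcd(r-2,r-1)=1$ and $\gcd(r-2,r)\mid 2$.

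Writing $rs=p-1$ and $e=(r-1)(s+\ell)$, the formula becomes $g=(r-2)(s+2\ell)$, and the remaining conditions translate as follows.
\begin{itemize}
\item $(r-2)(p-1)\le re$ is equivalent to $\ell\ge -s/(r-1)$.
\item $e\le p-1$ is equivalent to $\ell\le s/(r-1)$.
\item $g>0$ is equivalent to $\ell>-s/2$. This is automatic for $r\ge 4$. For $r=3$ it removes exactly the endpoint $\ell=-s/2$, which accounts for the correction term $\delta(r,3)$.
\end{itemize}
The evenness of $g$ then holds automatically, using that $s$ is even when $r$ is odd.

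For (ii), I will copy the argument of Proposition \ref{epsilon_formula1} with $r-1$ in place of $r$ and the exponent shifted by $e$.
\begin{itemize}
\item The nonzero coefficients of $(x^r-x)^e$ sit in degrees $e+m(r-1)$, with coefficient $(-1)^{e-m}\binom{e}{m}$.
\item For each row $i$ there is exactly one $m_i$ with $ip-d\le e+m_i(r-1)\le ip-1$, because the window has length $d=r-2<r-1$ and, since $r-1\le p-2$, does not end on a multiple of $r-1$. This gives $m_i=\lceil (ip-d)/(r-1)\rceil-s-\ell$.
\item The bounds from (i) give $0\le m_i\le e\le p-1$, so all binomial coefficients are nonzero in $\F_p$.
\item The resulting permutation has steps $\equiv -p \pmod{r-1}$ of absolute value at most $r-3$, so it is a PPM of type $(h,k)$ with $h+k=r-1$ and size $h+k-1$.
\item By Propositions \ref{PPMclassification} and \ref{PPMdet} it therefore equals $K(h,k)$, with determinant $\left[\frac{-p}{r-1}\right]$.
\end{itemize}
Finally, the accumulated sign $\prod(-1)^{e-m_i}$ is computed via $\sum m_i$, exactly as in (\ref{result1}), and I expect it to reduce to $(-1)^{r g/2}$.

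For (iii), the first step is the forward inclusion, that every element of ${\mathscr C}_4(p)$ satisfies $r-2\le d\le r$.
\begin{itemize}
\item Take $h\equiv p$ and $k\equiv -p \pmod{r-1}$ with $h+k=r-1$.
\item The matrix $\widetilde M$ is a PPM of type $(h,k)$ of size $d\le r$.
\item Since $\widetilde M\ne {\rm I}_d,{\rm J}_d$ and $d\ge 3$, Proposition \ref{PPMclassification} forces $h,k\ge2$ and $d\equiv 0,\pm1 \pmod{r-1}$.
\item Together with $3\le d\le r$, this leaves only $d\in\{r-2,r-1,r\}$.
\end{itemize}
For the reverse inclusion, I take a triple with $d\in\{r-2,r-1,r\}$ outside ${\mathscr C}_1\cup{\mathscr C}_2\cup{\mathscr C}_3$ and need $\det M_d((x^r-x)^e)\ne0$.
\begin{itemize}
\item For $d=r-2$ this is part (ii).
\item For $d=r-1$ or $d=r$, Lemma \ref{UinB1} places the triple in ${\mathscr B}_0(p)$ or ${\mathscr B}_+(p)$. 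Theorem 1 then gives $\det M_d(f(x)^e)=\varepsilon\,\delta^g$ with $\varepsilon\ne0$, and specializing with $\Delta(x^r-x)\ne0$ from (\ref{Deltaxrx}) yields the nonvanishing.
\end{itemize}
Finally, not lying in ${\mathscr C}_1,{\mathscr C}_2,{\mathscr C}_3$ means exactly that $d\ge 2$ with $\widetilde M\ne{\rm I}_d,{\rm J}_d$. The value $d=2$ is impossible here, since then $d\ge r-2$ would force $r\le 4$ and $\widetilde M$ would have to be ${\rm I}_2$ or ${\rm J}_2$. Hence $d\ge3$, and the triple lies in ${\mathscr C}_4(p)$.
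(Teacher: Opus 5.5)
Your proposal follows the paper's proof essentially step by step. In (i), the paper reaches $r\mid p-1$ and $(r-1)\mid e$ more directly by writing $g=2e-(p-1)-\frac{2e}{r-1}+\frac{2(p-1)}{r}$ and using $g/2\in\Z$, which avoids your parity case split. You should also dispose of $p=2$ separately: there $e=1$, $r\in\{3,4\}$ and $g\notin\Z$, so the left-hand side is empty. In (ii), the sign you expect does come out exactly, since $\sum_i(e-m_i)=\frac{r(r-2)(s+2\ell)}{2}=\frac{rg}{2}$.

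One intermediate claim in (iii) is false, however. For $3\le d\le r$, Proposition~\ref{PPMclassification} does not force $h,k\ge 2$. If $h=1$ (this occurs, e.g.\ for $p=13$, $r=4$), then $A_1(1,r-2)=\begin{pmatrix}1&\\&{\rm J}_{r-2}\end{pmatrix}$ is a PPM of size $r-1$ that differs from ${\rm I}_{r-1}$ and ${\rm J}_{r-1}$. The same happens for $d=r$, and also when $k=1$.

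Your conclusion $d\ge r-2$ still holds if you argue as the paper does, by contradiction from $3\le d\le r-3$. Then $d<h+k$, so the classification allows only ${\rm J}_d$ (if $h=1$), ${\rm I}_d$ (if $k=1$), or sizes $d\in\{1,h+k-1\}$ (if $h,k\ge 2$). All of these are excluded.
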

\begin{proof}
%
%
(i) When $p=2$, both sides are the empty set.
We assume $p \ne 2$.
By $d=r-2$, we have
\[
g = 2e-(p-1)-\frac{2e}{r-1}+\frac{2(p-1)}{r}.
\]
By $g \in 2\Z$, we have $\frac{e}{r-1}$, $\frac{p-1}{r}\in\Z$.
We put $p-1=rs$ and $e=(r-1)(s+\ell)$ ($\ell\in\Z$).
By the condition $d(p-1)\le re \le r(p-1)$ of ${\mathscr U}(p)$, we obtain $ - \lfloor \frac{s}{r-1} \rfloor \le \ell \le \lfloor \frac{s}{r-1} \rfloor$.
Since $g=(r-2)(s+2\ell)$, we obtain $s+2\ell>0$.
Now the assertion follows.

\noindent (ii)
For $1 \le i \le d$, there exists a unique integer $m_i$ such that $ip-d \le e+m_i (r-1) \le ip$.
Then, we have $m_i = \lceil \frac{ip-d}{r-1} \rceil - s - \ell$.
We claim that $e+m_i(r-1)=(r-1)(s+\ell+m_i) \ne ip$.
If $(r-1)(s+\ell+m_i) = ip$, since $r \le p-1$, we have $p|e+\ell+m_i$.
Hence, we have $r-1 \le (r-1)(s+\ell+m_i)/p =i \le d = r-2$.
This is a contradiction.
%
Thus, we have
\[
ip-d \le e+m_i (r-1) \le ip-1.
\]
We have
\begin{align*}
& 0 \le m_1 \Leftrightarrow s+\ell \le \left\lceil \frac{p-d}{r-1} \right\rceil \Leftrightarrow s+\ell-1<\frac{p-d}{r-1} \Leftrightarrow e < p+1,\\
& m_d \le e \Leftrightarrow \left\lceil \frac{d(p-1)}{r-1} \right\rceil \le
r(s+\ell) \Leftrightarrow \frac{d(p-1)}{r-1} \le r(s+\ell)
\Leftrightarrow d(p-1) \le re.  \end{align*}
Thus, we obtain $0 \le m_1 < m_2 < \cdots < m_d \le e$.
We have
\begin{equation*}
e+m_{i_1}(r-1)  = e+m_{i_2}(r-1)  \bmod p \ \ \Rightarrow\ \  m_{i_1} = m_{i_2} \ \ \Rightarrow\ \  i_1 = i_2.
\end{equation*}
Hence, $M_d((x^r-x)^e)$ gives a permutation matrix $\widetilde{M}$ and a permutation $\sigma$.
We have
\[
\sigma(i) = (s+\ell+m_i)(r-1) - (ip-d) + 1\ \ \ (1\le i \le d).
\]
and
\[
\sigma(i+1) - \sigma(i) = (m_{i+1} - m_i)(r-1) -p \ \ \ (1\le i \le d-1).
\]
We define $h$ and $k$ by
\[
h = p \bmod r-1,\ \ 0 \le h \le r-2,\ \ k = -p \bmod r-1,\ \ 0 \le k \le r-2.
\]
Then, we have
\[
h \ge 1,\ \ k \ge 1,\ \ \gcd(h,k)=1,\ \ h+k=r-1.
\]
We have
\[
|\sigma(i+1)-\sigma(i)| \le d-1 = r-3.
\]
Hence, $\widetilde{M}$ is a PPM of type $(h,k)$ of size $h+k-1$.
By Proposition \ref{PPMclassification} and Proposition \ref{PPMdet}, we obtain 
\[
\widetilde{M} = K(h,k),\ \ \det\widetilde{M} =
\left[ \frac{-p}{\;r-1\;} \right].
\]
Since $\widetilde{M}$ is a permutation matrix, we have
\[
\sum_{1 \le i \le d} \{e+m_i (r-1)\}  = \sum_{1 \le i \le d} (ip-d) + \sum_{1 \le j \le d} (j-1) = \frac{d(d+1)}{2} (p-1).
\]
Hence,
\[
\sum_{1 \le i \le d} m_i = \frac{1}{r-1} \left\{ -e d + \frac{d(d+1)}{2} (p-1) \right\}.
\]
Thus, the assertion follows.
\noindent (iii)
Suppose that $(r,e,d)$ is contained in the right hand side set of the equality.
When $d=r-1$ or $r$, by Lemma \ref{UinB1}, we have $(r,e,d)\in{\mathscr B}(p)$.
Hence, $\det M_d((x^r-x)^e) \ne 0$.
When $d=r-2$, by (ii), we have $\det M_d((x^r-x)^e) \ne 0$.
Thus, we obtain $(r,e,d)\in{\mathscr C}_4(p)$.

Conversely suppose $(r,e,d)\in{\mathscr C}_4(p)$.
Let $\sigma$ and $\widetilde{M}$ be the permutation and the permutation matrix of $M_d((x^r-x)^e)$.
Then, by (\ref{sigmaixrx}), we have
\[
\sigma(i+1) - \sigma(i) = (m_{i+1} - m_i)(r-1) -p \ \ \ (1\le i \le d-1).
\]
We define $h$ and $k$ by
\[
h = p \bmod r-1,\ \ 0 \le h \le r-2,\ \ k = -p \bmod r-1,\ \ 0 \le k \le r-2.
\]
Then, we have
\[
h \ge 1,\ \ k \ge 1,\ \ \gcd(h,k)=1,\ \ h+k=r-1.
\]
Now suppose that $d \le r-3$.
Then, by
\[
|\sigma(i+1)-\sigma(i)| \le d-1 \le r-4 \le r-2,
\]
$\widetilde{M}$ is a PPM of type $(h,k)$ of size $d$.
By $3 \le d \le h+k-2$ and $\widetilde{M}\ne {\rm J}_d$, ${\rm I}_d$, Proposition \ref{PPMclassification} implies that such $\widetilde{M}$ does not exist.
This completes the proof.
\end{proof}
%

%
The following table is an excerpt from the computational output for Theorem 3.
The field (C$j$) ($1 \le j \le 4$) is the number of $(r,e,d)$ in the difference set ${\mathscr C}_j(p) \setminus {\mathscr B}(p)$.
To generate $(r,e,d)\in{\mathscr C}_j(p) \setminus {\mathscr B}(p)$, we apply Lemma \ref{C1C2C3} and Lemma \ref{Updeqrm2C4}.
The fields (T1), (T2), (T3), (T4) are the numbers of $(r,e,d)\in\cup_{j=1}^{4}{\mathscr C}_j(p) \setminus {\mathscr B}(p)$, which survives the successive testing (\ref{SuccessiveTest}) by
$f(x) = x^r-x$, $x^r + x^k + 1$ ($r > k > 0$), $x^r + x^k + x$ ($r > k > 1$), $x^r + a x + b$ ($a=1,\ b=2,\ 3$) respectively. 
For (T1), $\Delta(x^r-x)$ and $\det M_{d}((x^r-x)^e)$ are computed by (\ref{Deltaxrx}) and Lemma \ref{C1C2C3}, Lemma \ref{Updeqrm2C4}.
For (T2), (T3), (T4), $\Delta(f(x))$ and $\det M_{d}(f(x)^e)$ were computed directly in the environment of SageMath \cite{SM}.
The first prime for which (T2) is not zero is $p=193$.
The first prime for which (T3) is not zero is $p=6301$.
The field (T4) is zero for all $p<200000$.
Hence, the proof of Theorem 3 is complete.
The primes $p=199523$, $199679$ are examples of primes such that $(p-1)/2$ are primes.
These are what we shall consider in Theorem 4. 
%
%
\setcounter{table}{-1}
\begin{center}
\begin{longtable}{|r|rrrr|rrrr|}
\hline
\multicolumn{1}{|c|}{{$p$}} & {(C1)} & {(C2)} & {(C3)} & {(C4)} & {(T1)} & {(T2)} & {(T3)} & {(T4)} \\
\hline
\endfirsthead
\hline
\multicolumn{1}{|c|}{{$p$}} & {(C1)} & {(C2)} & {(C3)} & {(C4)} & {(T1)} & {(T2)} & {(T3)} & {(T4)} \\
\hline
\endhead
\hline
\endfoot
\hline
\endlastfoot
3 &   0  & 0  & 0  & 0  & 0  & 0 & 0 & 0 \\
5 &   1  & 0  & 0  & 0  & 0  & 0 & 0 & 0 \\
7 &   2  & 0  & 0  & 0  & 0  & 0 & 0 & 0 \\
11 &  3  & 0  & 2  & 0  & 0  & 0 & 0 & 0 \\
13 &  9  & 2  & 0  & 0  & 1  & 0 & 0 & 0 \\
17 &  7  & 0  & 2  & 0  & 0  & 0 & 0 & 0 \\
19 &  11 & 0  & 5  & 0  & 0  & 0 & 0 & 0 \\
23 &  3  & 0  & 0  & 0  & 0  & 0 & 0 & 0 \\
29 &  14 & 0  & 13 & 0  & 1  & 0 & 0 & 0 \\
31 &  26 & 11 & 7  & 0  & 10 & 0 & 0 & 0 \\
37 &  36 & 6  & 0  & 2  & 1  & 0 & 0 & 0 \\
41 &  30 & 11 & 20 & 0  & 1  & 0 & 0 & 0 \\
43 &  32 & 17 & 0  & 2  & 9  & 0 & 0 & 0 \\
$\vdots$ & $\vdots$ & $\vdots$ & $\vdots$ & $\vdots$ & $\vdots$ & $\vdots$ & $\vdots$ & $\vdots$ \\
193 &  219 & 32 & 0 & 20   & 4 & 1 & 0 & 0 \\
$\vdots$ & $\vdots$ & $\vdots$ & $\vdots$ & $\vdots$ & $\vdots$ & $\vdots$ & $\vdots$ & $\vdots$ \\
6301 &  13117 & 17642 & 0 & 492   & 8 & 1 & 1 & 0 \\
$\vdots$ & $\vdots$ & $\vdots$ & $\vdots$ & $\vdots$ & $\vdots$ & $\vdots$ & $\vdots$ & $\vdots$ \\
199523 & 3 & 0 & 0 & 0 & 0 & 0 & 0 & 0 \\
$\vdots$ & $\vdots$ & $\vdots$ & $\vdots$ & $\vdots$ & $\vdots$ & $\vdots$ & $\vdots$ & $\vdots$ \\
199679 & 3 & 0 & 0 & 0 & 0 & 0 & 0 & 0 \\
$\vdots$ & $\vdots$ & $\vdots$ & $\vdots$ & $\vdots$ & $\vdots$ & $\vdots$ & $\vdots$ & $\vdots$ \\
199873 & 274739 & 108269 & 0 & 28330 & 8 & 0 & 0 & 0 \\
199877 & 53994 & 0 & 33312 & 44 & 0 & 0 & 0 & 0 \\
199889 & 131130 & 0 & 213213 & 10646 & 4 & 0 & 0 & 0 \\
199909 & 223583 & 33318 & 59972 & 11018 & 1 & 0 & 0 & 0 \\
199921 & 463463 & 1017128 & 0 & 14464 & 23 & 0 & 0 & 0 \\
199931 & 59997 & 0 & 49982 & 4442 & 1 & 0 & 0 & 0 \\
199933 & 166637 & 33322 & 0 & 16356 & 1 & 0 & 0 & 0 \\
199961 & 150060 & 49991 & 182502 & 1308 & 3 & 0 & 0 & 0 \\
199967 & 23115 & 0 & 83319 & 614 & 0 & 0 & 0 & 0 \\
199999 & 145538 & 0 & 229997 & 1404 & 2 & 0 & 0 & 0 \\
\hline
\end{longtable}
\renewcommand{\arraystretch}{1}
\captionof{table}{Table for Theorem 3}\label{TableTh3}
\end{center}
\setcounter{table}{-0}
\begin{center}
\renewcommand{\arraystretch}{1.1}
\begin{longtable}{|c|rrrr|}
\hline
{} & (T1) & (T2) & (T3) & (T4) \\
\hline
Average & 5.52694 & 0.01006 & 0.00266 & 0 \\
Maximum & 1170 & 2 & 1 & 0 \\
\hline
\end{longtable}
\renewcommand{\arraystretch}{1}
\captionof{table}{Statistical Summary of Table for Theorem 3}\label{TableTh3AverageMaximum}
\end{center}
%
%
%
%
%
%
\subsection{Proof of Theorem 4}
%

%
%
Suppose a prime $p$ and $(r,e,d)\in{\mathscr C}_1(p)$ are given.
By Lemma \ref{C1C2C3} (i), we have $rs=p-1$, $e=s+(r-1)\ell$ ($1\le \ell \le s$), $d=1$.
Then, by Lemma \ref{toRho1}, (\ref{Deltaxr1}), Lemma \ref{C1C2C3} (i), (\ref{Deltaxrx}), we obtain
\begin{align}
& \frac{\det M_d((x^r-1)^e)}{\Delta(x^r-1)^{g/2}} = \frac{\det M_d((x^r-x)^e)}{\Delta(x^r-x)^{g/2}} \label{C1Survive} \\
\Longleftrightarrow\ \ \ 
& \frac{(-1)^{(r-1)\ell}\binom{e}{s}}{\left\{(-1)^{(r-1)(r-2)/2}r^r\right\}^{\ell}} 
= \frac{(-1)^{r\ell}\binom{e}{s-\ell}}{\left\{(-1)^{(r+1)(r+2)/2} (r-1)^{r-1} \right\}^{\ell}} \notag \\
\Longleftrightarrow\ \ \  
& \left( \frac{-1}{s+1}\right)^{\ell} \frac{\ell (\ell + s) \cdots (\ell + (\ell-1)s)}{s (s-1) \cdots (s-\ell+1)} = \left( \frac{-1}{s+1}\right)^{r \ell}. \notag
\end{align}
Here, we remark that
\begin{align*}
\frac{\binom{e}{s}}{\binom{e}{s-\ell}} &=  \frac{(e-s+\ell)(e-s+\ell-1)\cdots(e-s+1)}{(s-(\ell-1))(s-(\ell-2))\cdots s} \\
&= \frac{r\ell (r\ell-1)\cdots(r\ell-(\ell-1))}{(s-(\ell-1))(s-(\ell-2))\cdots s} \\
&= \frac{1}{s^{\ell}}\frac{rs\ell (rs\ell-s)\cdots(rs\ell-(\ell-1)s)}{(s-(\ell-1))(s-(\ell-2))\cdots s} \\
&= \frac{1}{s^{\ell}}\frac{(-\ell) (-\ell-s)\cdots(-\ell-(\ell-1)s)}{(s-(\ell-1))(s-(\ell-2))\cdots s} \\
&= \frac{(-1)^{\ell}}{s^{\ell}}\frac{\ell (\ell+s)\cdots(\ell+(\ell-1)s)}{ s (s-1) \cdots (s-\ell+1)},
\end{align*}
and
\begin{equation*}
\frac{r^r}{(r-1)^{r-1}} = \frac{(rs)^r}{(rs-s)^{r-1} s} = \frac{(-1)^r}{(-1-s)^{r-1} s} = \frac{-1}{(s+1)^{r-1} s}.
\end{equation*}

Now, for integers $s$ and $\ell$ such that $1 \le \ell \le s$, we define a rational number $\kappa(s,\ell)$ by
\begin{equation*}
\kappa(s,\ell) = \left( \frac{-1}{s+1}\right)^{\ell} \frac{\ell (\ell + s) \cdots (\ell + (\ell-1)s)}{s (s-1) \cdots (s-\ell+1)}. 
\end{equation*}
Then, since $\dfrac{1}{s+1} \dfrac{\ell+ks}{s-\ell+1+k} < 1$ for $0 \le k \le \ell-1$, we have $0 < |\kappa(s,\ell)| < 1$.
Also, any prime divisor of the denominator of $\kappa(s,\ell)$ is less than or equal to $s+1$.
By the above, for $(r,e,d)\in{\mathscr C}_1(p)$, we obtain 
\begin{align}
(\ref{C1Survive}) \ \ \ 
\Longleftrightarrow\ \ \ 
& \kappa(s,\ell) = \left(\frac{-1}{s+1} \right)^{r\ell} \text{ in $\F_p$} \label{C1SurviveCond1}\\
\Longrightarrow\ \ \ 
& \kappa(s,\ell)^s = 1 \text{ in $\F_p$} \label{C1SurviveCond2}.
\end{align}
When $s$ and $\ell$ are fixed, by applying (\ref{C1SurviveCond2}) first and (\ref{C1SurviveCond1}) second with the conditions $s \mid p-1$ and $\frac{p-1}{s} \ge 2$,
the set of primes $p$, for which $(r,e,d)$ survives the condition (\ref{C1Survive}) can be effectively computed.
The following is the table for $s \le 3$ of $p$ and $(r,e,d)$ satisfying (\ref{C1Survive}).
%
%
%
%
\setcounter{table}{0}
\begin{center}
\renewcommand{\arraystretch}{1.2}
\begin{longtable}{|c|c|c|r|l|c|}
\hline
$s$ & $\ell$ & $\kappa(s,\ell)$ & $p$ & $(r,e,d)$ & in ${\mathscr B}(p)$ or not\\
\hline
1 & 1 & $-\frac{1}{2}$   & 3  & $(2,2,1)$   & $\in{\mathscr B}_0(p)$ \\
\hline
2 & 1 & $-\frac{1}{6}$   & 5  & $(2,3,1)$   & $\in{\mathscr B}_0(p)$ \\
\cline{4-6}
  &   &                  & 7  & $(3,4,1)$   & $\in{\mathscr B}_-(p)$ \\
\cline{2-6}
  & 2 & $\frac{4}{9}$    & 5  & $(2,4,1)$   & $\in{\mathscr B}_0(p)$ \\
\hline
3 & 1 & $-\frac{1}{12}$  & 7  & $(2,4,1)$   & $\in{\mathscr B}_0(p)$ \\ 
\cline{2-6}
  & 2 & $\frac{5}{48}$   & 7  & $(2,5,1)$   & $\in{\mathscr B}_0(p)$ \\
\cline{4-6}
  &   &                  & 43 & $(14,29,1)$ & $\notin{\mathscr B}(p)$ \\ 
\cline{2-6}
  & 3 & $-\frac{27}{64}$ & 7  & $(2,6,1)$   & $\in{\mathscr B}_0(p)$ \\ 
\cline{4-6}
  &   &                  & 13 & $(4,12,1)$  & $\notin{\mathscr B}(p)$ \\
\hline
\end{longtable}
\renewcommand{\arraystretch}{1}
\captionof{table}{$\kappa(s,\ell)$}\label{TableKappa}
\end{center}
%
%
%
%

%
%
Now we shall give a proof of Theorem 4.
We put $p=2\pi+1$ with a prime $\pi$.
We let the part of $p \le 11$ of Table 1 be admitted.
Hence, we assume $\pi > 5$.

Suppose we have $(r,e,d)\in{\mathscr C}_1(p)$.
By $r \mid p-1=2\pi$, we have $r=2$ or $\pi$ or $2\pi$.
If $r=2$, by Lemma \ref{UinB1} (iii), we have $(r,e,d)\in{\mathscr B}(p)$.
If $r=\pi$ or $2\pi$, we have $s=2$ or $1$ accordingly.
By Table 2, since $p>11$, we have $(r,e,d)\notin{\mathscr A}(p)$.
Thus we obtain $({\mathscr C}_1(p) \setminus {\mathscr B}(p))\cap{\mathscr A}(p) = \emptyset$.

Suppose we have $(r,e,d)\in{\mathscr C}_2(p)$.
By Lemma \ref{C1C2C3} (ii), we have $r(r-1) \mid p-1$.
If $r=2$, by Lemma \ref{UinB1} (iii), we have $(r,e,d)\in{\mathscr B}(p)$.
If $r\ge 3$, then $r(r-1) \mid 2\pi$ implies $\pi=3$.
Thus we obtain ${\mathscr C}_2(p) \subset {\mathscr B}(p)$.

Suppose we have $(r,e,d)\in{\mathscr C}_3(p)$.
By Lemma \ref{C1C2C3} (iii), we have $3 \le r$, $r \mid p-1$, $r-1\mid p+1$.
Then, these conditions imply $\pi \le 5$.
Thus we obtain ${\mathscr C}_3(p) = \emptyset$.

Suppose we have $(r,e,d)\in{\mathscr C}_4(p)$.
By Lemma \ref{Updeqrm2C4} (iii), we have $r-2 \le d \le r$.
If $r-1 \le d \le r$, by Lemma \ref{UinB1} (i), (ii), we have $(r,e,d)\in{\mathscr B}(p)$.
If $d=r-2$, by Lemma \ref{Updeqrm2C4} (i), we have $e=(r-1)(s+\ell)$ ($-\lfloor \frac{s}{r-1}\rfloor +\delta(r,3)\le \ell \le \lfloor \frac{s}{r-1} \rfloor$).
If $\ell=0$, we have $(r,e,d)\in{\mathscr B}_{-}(p)$.
If $\ell\ne 0$, we have $\lfloor \frac{s}{r-1} \rfloor \ge 1$.
Hence $r(r-1) \le p-1$.
This, together with $r \ge 3$ and $r \mid p-1$, implies $\pi \le 3$.
Thus we obtain ${\mathscr C}_4(p) \subset {\mathscr B}(p)$.

These complete the proof of Theorem 4.
%
%
\section{Proof of $M_d(f(x)^e)^{-1}M_d(f(x)^{e+1})^{}=B_r \ Q_r \ Z_{r,n} \ P_r$}
In this Section, we shall prove Theorem 5.
%
%
\subsection{}
Let $p$ be a prime and $(r,e,d)\in{\mathscr B}_0(p)$ be such that $(r,e+1,d)\in{\mathscr B}_0(p)$.
We remark that this condition implies $p\ge 5$ and $r \le p-1$.
Note that $d=r-1$.
We define $n$ by $p-1=e+n$.
We take independent variables $s_1$, \ldots, $s_r$ over $\F_p$ and put $f(x)=x^r+s_1 x^{r-1}+\cdots+s_r$.
We suppose $s_0=1$ and $s_i = 0$ for $i<0$ or $i>r$.
In general, for a Laurent series $\Phi(x)=\sum_{i\in\Z} C_i x^i$, we denote $C_i = [x^i]\Phi(x)$ for $i\in\Z$.
We remark that
\begin{equation}\label{OriginRelation}
[t^k] t \Phi'(t) = [t^k] k \Phi(t)\ \ \ (k\in\Z).
\end{equation}
%

%
We put $c_i = [x^i]f(x)^{e}$ ($i\in\Z$).
Let $L$ be the $d \times (r+d)$ matrix given by
\begin{equation}\label{matL}
L = \begin{pmatrix*}[l] 
c_{p-2r+1}  & \cdots & c_{p-r}  & c_{p-d}  & \cdots & c_{p-1} \\
c_{2p-2r+1} & \cdots & c_{2p-r} & c_{2p-d} & \cdots & c_{2p-1} \\
\vdots      &        & \vdots   & \vdots   &        & \vdots \\
c_{dp-2r+1} & \cdots & c_{dp-r} & c_{dp-d} & \cdots & c_{dp-1}
\end{pmatrix*}.
\end{equation}
The rightmost $d \times d$ matrix in $L$ is equal to $M_{d}(f(x)^e)$.
Let $V$ be the $(r+d)\times d$ matrix given by
\begin{equation}\label{matV}
V = \begin{pmatrix*}[l] 
s_0     &        &        \\
s_1     & \ddots &        \\
\vdots  & \ddots & s_0    \\
s_{r-1} & \cdots & s_1    \\
s_r     & \cdots & s_2    \\
{}      & \ddots & \vdots \\
{}      & {}     & s_r    
\end{pmatrix*}.
\end{equation}
%
%
\begin{lemma} \label{LVeqMe1}
$\ \ \ L\ V = M_{d}(f(x)^{e+1})$.
\end{lemma}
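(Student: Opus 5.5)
We compare both sides entry by entry. The $(i,j)$ entry of $M_d(f(x)^{e+1})$ is $[x^{ip-d-1+j}]f(x)^{e+1}$. Since $f(x)^{e+1}=f(x)^e\cdot f(x)$ and $f(x)=\sum_{0\le k\le r}s_k x^{r-k}$, this coefficient equals
\[
\sum_{0\le k\le r} s_k\, c_{ip-d-1+j-r+k}.
\]
So the plan has three steps. First, write down the index set of the columns of $L$. Second, compute the $(i,j)$ entry of $LV$. Third, check that the two sums agree term by term.

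The columns of $L$ are indexed by $q=1,\ldots,r+d$. The first $r$ columns hold $c_{ip-2r+q}$. The last $d$ columns hold $c_{ip-d-1+(q-r)}$. Since $d=r-1$, we have $ip-2r+q=ip-d-1+(q-r)$, so both blocks follow the single formula $c_{ip-2r+q}$ for $1\le q\le r+d=2r-1$. Hence row $i$ of $L$ lists the consecutive coefficients $c_{ip-2r+1},\ldots,c_{ip-1}$.

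Next we read off $V$ from its shape. Column $j$ of $V$ has $s_0$ in row $j$ and $s_k$ in row $j+k$ for $0\le k\le r$, and zeros elsewhere (with $s_k=0$ outside $0\le k\le r$). Therefore
\[
(LV)_{i,j}=\sum_{0\le k\le r} c_{ip-2r+j+k}\, s_k .
\]

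Finally we match this with the first display. Substituting $d=r-1$ there gives index $ip-d-1+j-r+k=ip-2r+j+k$, which is exactly the index above. So the two sums coincide, and the identity holds.

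The one point to check is that no term is lost at the boundary. For $1\le j\le d$ and $0\le k\le r$, the row index $j+k$ ranges within $1,\ldots,d+r$, so every needed entry of $V$ exists. Any coefficient $c_m$ with $m<0$ is simply $0$ on both sides, so the possibly negative indices in $L$ cause no discrepancy. This is a routine index verification and I expect no real obstacle; the main care is the identification $d=r-1$, which is what makes the two column blocks of $L$ contiguous.
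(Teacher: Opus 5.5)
Your proof is correct and follows the paper's argument. The paper uses $d=r-1$ and writes $L_{i,j}=[x^{ip+j-2r}]f(x)^e$, $V_{i,j}=[x^{r-i+j}]f(x)=s_{i-j}$ and $M_d(f(x)^{e+1})_{i,j}=[x^{ip+j-r}]f(x)^{e+1}$, then reads off the identity from $f(x)^{e+1}=f(x)^e\cdot f(x)$; you have simply written out that coefficient convolution and the boundary bookkeeping explicitly.
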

\begin{proof}
%
%
%
%
%
%
By
\begin{align*}
L_{i,j} &= [x^{ip+j-2r}] f(x)^e \ \ \ (1 \le i \le d,\ 1 \le j \le r+d), \\
V_{i,j} &= [x^{r-i+j}] f(x) \ \ \ (1 \le i \le r+d,\ 1 \le j \le d), \\
M_{d}(f(x)^{e+1})_{i,j} &= [x^{ip+j-r}] f(x)^{e+1}\ \ \ (1 \le i \le d,\ 1 \le j \le d),
\end{align*}
we obtain the assertion.
%
%
%
%
%
%
\end{proof}
%

%
By (\ref{OriginRelation}), we have
\begin{equation}\label{theRelation0}
[x^{ip+j-r}] \left\{ -x \frac{d}{dx} f(x)^{e+1} + (ip+j-r) f(x)^{e+1}\right\} = 0\ ( 1 \le i \le d,\ 1 \le j \le r),
\end{equation}
which is equivalent to
\begin{equation}\label{theRelation1}
[x^{ip+j-r}] f(x)^e \{ n x f'(x) - (r-j) f(x) \} = 0 \ \ \ ( 1 \le i \le d,\ 1 \le j \le r).
\end{equation}
Let $R$ be the $(r+d)\times r$ matrix given by
%
%
%
%
\begin{align} \label{matR}
R &= 
\ n 
\begin{pmatrix}
r s_0     \\
(r-1) s_1 & r s_0     \\
\vdots    & \ddots    & \ddots    \\
\vdots    &           & \ddots    & \ddots    \\
1 s_{r-1} & \cdots    & \cdots    & (r-1) s_1 & r s_0     \\
0 s_r     & 1 s_{r-1} & \cdots    & \cdots    & (r-1) s_1 \\
          & 0 s_r     & 1 s_{r-1} &           & \vdots    \\
          &           & \ddots    & \ddots    & \vdots    \\
          &           &           & 0 s_r     & 1 s_{r-1}
\end{pmatrix}
\\
&- \nonumber
\begin{pmatrix}
s_0     \\
s_1     & s_0     \\
\vdots  & \ddots  & \ddots  \\
\vdots  &         & \ddots  & \ddots \\
s_{r-1} & \cdots  & \cdots  & s_1    & s_0    \\
s_r     & s_{r-1} & \cdots  & \cdots & s_1    \\
        & s_r     & s_{r-1} &        & \vdots \\
        &         & \ddots  & \ddots & \vdots \\
        &         &         & s_r    & s_{r-1}
\end{pmatrix}
\begin{pmatrix}
r-1 \\
    & r-2 \\
    &     & \ddots \\
    &     &        & 1 \\
    &     &        &   & 0 
\end{pmatrix}.
\end{align}
The matrix $R$ is of the form $R = n A + B$.
In such a case, we call $A$ and $B$ the $n$-part and the $1$-part of $R$ respectively. 
%
%
\begin{lemma}\label{LReq0}
$
\ \ \ L\ R = 0.
$
\end{lemma}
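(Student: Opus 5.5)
The plan is to verify $LR=0$ entrywise, exactly in the way Lemma \ref{LVeqMe1} was verified: identify $(LR)_{i,j}$ with a coefficient of a product of polynomials, and recognize it as the left-hand side of (\ref{theRelation1}).

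First I record the entries of $R$ in the form $R_{k,j}=[x^{\,r-k+j}]\,g_j(x)$ for $1\le k\le r+d$, $1\le j\le r$, for suitable polynomials $g_j$. The $n$-part in column $j$ has entries $(r-m)s_m$ in row $k=j+m$. Since $(r-m)s_m$ is the coefficient of $x^{r-m}$ in $xf'(x)$, this column is read off from $n\,xf'(x)$, with the same index shift as the columns of $V$ are read off from $f(x)$. The $1$-part in column $j$ is the $j$-th column of the Toeplitz matrix of the $s_m$, multiplied by $r-j$, so it is read off from $(r-j)f(x)$ with the same shift. Hence column $j$ of $R$ is the coefficient vector of
\[
g_j(x)=n\,xf'(x)-(r-j)f(x),
\]
aligned so that $R_{k,j}=[x^{r-k+j}]g_j(x)$. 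This is the same convention as $V_{k,j}=[x^{r-k+j}]f(x)$, so that $V$ corresponds to $f$ and $R$ to the $g_j$.

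Next, using $L_{i,k}=[x^{ip+k-2r}]f(x)^e$, I compute
\[
(LR)_{i,j}=\sum_{k=1}^{r+d}[x^{ip+k-2r}]f(x)^e\,[x^{r-k+j}]g_j(x).
\]
The exponents add to $ip+j-r$. As in Lemma \ref{LVeqMe1}, I must check that the range $1\le k\le r+d$ captures every pair of nonzero coefficients. Since $\deg g_j\le r$, the second factor vanishes unless $0\le r-k+j\le r$, that is, $j\le k\le r+j$, and $r+j\le r+d$ would require $j\le d=r-1$.

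For $j=r$ the second factor also requires $k=2r>r+d$, but that coefficient is $[x^0]g_r=-0\cdot s_r+n\cdot 0=0$. This is exactly why the $1$-part has its last diagonal entry $0$ and the $n$-part has the entry $0\,s_r$. So no terms are lost, and
\[
(LR)_{i,j}=[x^{ip+j-r}]\,f(x)^e\{n\,xf'(x)-(r-j)f(x)\}.
\]
This vanishes by (\ref{theRelation1}).

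It remains to justify (\ref{theRelation1}) from (\ref{theRelation0}). Expanding $x\frac{d}{dx}f^{e+1}=(e+1)f^e\,xf'$ and using $e+1\equiv -n \pmod p$ turns the bracket into
\[
f^e\bigl(n\,xf'+(ip+j-r)f\bigr)\equiv f^e\bigl(n\,xf'-(r-j)f\bigr)\pmod p.
\]
The main obstacle is the index bookkeeping: confirming that the band structure of $R$ matches $g_j$ row by row with the right offset, and handling the boundary column $j=r$.
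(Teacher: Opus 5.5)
Your proposal is correct and follows the paper's proof. The paper likewise identifies $R_{i,j}=[x^{r-i+j}]\{n\,xf'(x)-(r-j)f(x)\}$ and concludes directly from (\ref{theRelation1}). You additionally spell out two steps the paper leaves implicit: the check that the range $1\le k\le r+d$ captures every nonzero term, including the vanishing constant term of $g_r$ when $j=r$, and the reduction of (\ref{theRelation0}) to (\ref{theRelation1}) via $e+1\equiv -n \pmod p$.
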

\begin{proof}
By
\begin{equation*}
R_{i,j} =  [x^{r-i+j}]\{ n x f'(x) - (r-j) f(x)\} \ \ \ (1\le i \le r+d,\ 1\le j \le r)
\end{equation*}
and (\ref{theRelation1}), we obtain the assertion.
\end{proof}
%

%
Let $R_1$ and $R_2$ be the upper $r \times r$ matrix and the lower $d \times r$ matrix in $R$ respectively.
\begin{equation} \label{RR1R2}
R = \begin{pmatrix} R_1 \\ R_2 \end{pmatrix}
\end{equation}
We put $L = \begin{pmatrix} L' & M_{d}(f(x)^e) \end{pmatrix}$.
Then, by Lemma \ref{LReq0}, we have
\[
L R =
\begin{pmatrix} L' & M_{d}(f(x)^e) \end{pmatrix}
\begin{pmatrix} R_1 \\ R_2 \end{pmatrix} 
= L' R_1 + M_{d}(f(x)^e) R_2 = 0,
\]
which implies $L' = - M_{d}(f(x)^e) R_2^{} R_1^{-1}$, hence
\[
L = M_{d}(f(x)^e) \begin{pmatrix} -R_2^{} R_1^{-1} & {\rm I}_d \end{pmatrix}. 
\]
By Lemma \ref{LVeqMe1}, we obtain
\[
\begin{pmatrix} -R_2^{} R_1^{-1} & {\rm I}_d \end{pmatrix} V = M_{d}(f(x)^e)^{-1} \ M_{d}(f(x)^{e+1}).
\]
Hence, it is sufficient to show
\begin{equation}\label{target1}
\begin{pmatrix} - R_2^{} R_1^{-1} & {\rm I}_d \end{pmatrix} V = B_r \ Q_r \ Z_{r,n} \ P_r
\end{equation}
to prove Theorem 5.
In what follows, we shall show (\ref{target1}) in characteristic $0$ assuming
$s_1$, \ldots, $s_r$, $n$ are independent variables over $\Q$ 
with matrices $R$ and $V$ defined over $\Q$ by (\ref{matR}) and (\ref{matV}) respectively.
%
%
\subsection{The matrices $P(\lambda_1, \ldots, \lambda_m)$, $Q(\mu_1, \ldots, \mu_m)$, $S_m (\psi(t))$ }
For an integer $r\ge 2$, let $s_1$, \ldots, $s_r$ be independent variables over $\Q$.
We put $\varphi(t)=1+s_1 t+\cdots+s_r t^r$.
For $\lambda \in \C$ and $\ell\in\Z$, we define $\beta_{\ell}(\lambda)$ by
\[
\beta_{\ell}(\lambda) = [t^{\ell}] \varphi(t)^{\lambda}.
\]
Note that $\beta_{\ell}(\lambda)$ can be written as
\[
\beta_{\ell}(\lambda)=
\sum_{\substack{k_1,\ldots,\;k_r \ge 0\\k_1+2k_2+\cdots+rk_r=\ell}} \binom{\lambda}{k_1+\cdots+k_r} \frac{(k_1+\cdots+k_r)!}{k_1!\cdots k_r!} s_1^{k_1}\cdots s_r^{k_r}.
\]
Here, $\binom{\lambda}{k_1+\cdots+k_r}$ denotes the binomial coefficient.
In particular, we have $\beta_0(\lambda)=1$ and $\beta_{\ell}(\lambda)=0$ for $\ell < 0$.
For $m \ge 1$ and $\lambda_1$, \ldots, $\lambda_m \in \C$, we define an $m\times m$ matrix $P(\lambda_1,\ldots,\lambda_m)$ by
\[
P(\lambda_1, \ldots, \lambda_m)_{i,j} = \beta_{i-j}(\lambda_i) \ \ \ (\ 1 \le i,\ j \le m\ ).
\]
Similarly, for $m \ge 1$ and $\mu_1$, \ldots, $\mu_m \in \C$, we define an $m\times m$ matrix $Q(\mu_1,\ldots,\mu_m)$ by
\[
Q(\mu_1, \ldots, \mu_m)_{i,j} = \beta_{i-j}({\mu_j}) \ \ \ (\ 1 \le i,\ j \le m\ ).
\]
Here, $\lambda_1$ and $\mu_m$ are dummy parameters.
For $m \ge 1$ and $\lambda \in \C$, we put
\[
U_m(\lambda) = P(\underbrace{\lambda,\ldots,\lambda}_{m}) = Q(\underbrace{\lambda,\ldots,\lambda}_{m}). 
\]
Then, these satisfy the following relations.
\begin{align}
& \{ P(\lambda_1,\ldots,\lambda_m)\ Q(\mu_1,\ldots,\mu_m) \}_{i,j} = \beta_{i-j}(\lambda_i + \mu_j)\label{PQcomponent} \\
& P(\lambda_1,\ldots,\lambda_m)\ U_m(\mu) = P(\lambda_1+\mu,\ldots,\lambda_m+\mu) \label{PU}\\
& U_m(\lambda)\ Q(\mu_1,\ldots,\mu_m) = Q(\lambda+\mu_1,\ldots,\lambda+\mu_m) \label{UQ}\\
& U_m(\lambda)\ U_m(\mu) = U_m(\lambda+\mu) \label{UU}\\
& U_m(0) = {\rm I}_m \label{U1}
\end{align}
The matrices $P_r$, $Q_r$ introduced in (\ref{matPr}), (\ref{matQr}) can be written as follows.
\begin{align*}
P_r &= P\left(-\frac{r-1}{r}, -\frac{r-2}{r}, \ldots , -\frac{1}{r}\right) \\ 
Q_r &= Q\left(-\frac{1}{r}, -\frac{2}{r}, \ldots, -\frac{r-1}{r}\right)
\end{align*}
For $m \ge 1$ and a power series $\psi(t)=a_0 + a_1 t + \cdots$, we put
\[
S_m (\psi(t)) = \begin{pmatrix}
a_0     &     &        &     &     &    \\
a_1     & a_0 &        &     &     &    \\
{}      & a_1 & \ddots &     &     &    \\
\vdots  & {}  & \ddots & \ddots &     &    \\
{}      & {}  & {}     & a_1 & a_0 & {} \\
a_{m-1} & {}  & \cdots & {}  & a_1 & a_0
\end{pmatrix},
\]
i.e., $S_m(\psi(t))_{i,j} = a_{i-j}$ assuming $a_i = 0$ for $i<0$.
Then, the followings are satisfied.
\begin{align}
& S_m(\psi_1(t) + \psi_2(t)) = S_m(\psi_1(t)) + S_m(\psi_2(t)) \label{Sadd}\\
& S_m(\psi_1(t)\ \psi_2(t)) = S_m(\psi_1(t))\ S_m(\psi_2(t)) \label{Smult}\\
& S_m( 1 ) = {\rm I}_m \label{S1}\\
& S_m(\varphi(t)^{\lambda}) = U_m(\lambda) \label{SU}
\end{align}
We remark that
\begin{equation}\label{R1bynS}
R_1= n\ S_r(r \varphi(t)-t \varphi'(t)) - S_r(\varphi(t)) \begin{pmatrix} r-1 \\ {} & r-2 \\ {} & {} & \ddots \\ {} & {} & {} & 1 \\ {} & {} & {} & {} & 0 \end{pmatrix}.
\end{equation}
%
%
\subsection{Some Fundamental Relations of Matrices}
We take $r \ge 2$ and
assume that $s_1$, \ldots , $s_r$, $n$ are independent variables over $\Q$.
We put $\varphi(t)=1+s_1 t+\cdots+s_r t^r$.
%
%
\begin{lemma} \label{lemmaUsingRes}
We assume $r \ge 2$ and $m \ge 1$.
\begin{enumerate}
\item 
\begin{equation*}
P\left( -\frac{r-1}{r}, -\frac{r-2}{r}, \ldots, -\frac{r-m}{r}\right)
S_m\left(r-\frac{t\, \varphi'(t)}{\varphi(t)}\right)
Q\left(\frac{r-1}{r}, \frac{r-2}{r}, \ldots, \frac{r-m}{r}\right)
= 
r \; {\rm I}_m
\end{equation*}
%
%
\item
\begin{align*}
&
P\left( -\frac{r-1}{r}, -\frac{r-2}{r}, \ldots, -\frac{r-m}{r}\right)
\begin{pmatrix}
r-1 \\
& r-2 \\
&& \ddots \\
&&& r-m
\end{pmatrix}
Q\left(\frac{r-1}{r}, \frac{r-2}{r}, \ldots, \frac{r-m}{r}\right)
\\
& 
= 
\begin{pmatrix}
r-1 \\
& r-2 \\
&& \ddots \\
&&& r-m
\end{pmatrix}
\end{align*}
%
%
%
\end{enumerate}
\end{lemma}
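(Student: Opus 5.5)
The plan is to compute the $(i,j)$-entry of each triple product directly with generating functions. In both products the middle factor is lower triangular, and the outer factors are built from $\beta_\ell$. We take $\lambda_i=-\frac{r-i}{r}$ and $\mu_j=\frac{r-j}{r}$ for $1\le i,j\le m$. Then all three factors are lower triangular, so only $i\ge j$ needs attention. The key feature is
\[
\lambda_i+\mu_j=\frac{i-j}{r}=\frac{k}{r},\qquad k=i-j,
\]
so every entry becomes a coefficient $[t^k]$ of some expression in $\varphi(t)^{k/r}$. Such a coefficient can then be evaluated by (\ref{OriginRelation}).

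For (i), the row $i$ of $P(\lambda_1,\ldots,\lambda_m)$ is the coefficient sequence of $\varphi^{\lambda_i}$. The column $j$ of $Q(\mu_1,\ldots,\mu_m)$ is the coefficient sequence of $\varphi^{\mu_j}$. By (\ref{Smult}) and the same computation as in (\ref{PQcomponent}), the $(i,j)$-entry of the product is
\[
[t^{k}]\,\varphi^{\lambda_i}\Bigl(r-\frac{t\varphi'}{\varphi}\Bigr)\varphi^{\mu_j}
= r\,[t^k]\varphi^{k/r} - [t^k]\, t\varphi'\varphi^{k/r-1}.
\]
For $k=0$ this equals $r-0=r$. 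For $k>0$ we write $t\varphi'\varphi^{k/r-1}=\frac{r}{k}\,t\frac{d}{dt}\varphi^{k/r}$. Applying (\ref{OriginRelation}), the second term becomes $\frac{r}{k}\cdot k\,\beta_k(k/r)=r\beta_k(k/r)$, which cancels the first term. So the product is $r\,{\rm I}_m$.

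For (ii), the $(i,j)$-entry is $\sum_{\ell}\beta_{i-\ell}(\lambda_i)(r-\ell)\beta_{\ell-j}(\mu_j)$. We split $r-\ell=(r-i)+(i-\ell)$. The first part gives
\[
(r-i)\sum_\ell\beta_{i-\ell}(\lambda_i)\beta_{\ell-j}(\mu_j)=(r-i)\,\beta_k(k/r).
\]
In the second part, the factor $(i-\ell)\beta_{i-\ell}(\lambda_i)$ is the coefficient of $t^{i-\ell}$ in $t\frac{d}{dt}\varphi^{\lambda_i}=\lambda_i t\varphi'\varphi^{\lambda_i-1}$. Hence the second part equals
\[
\lambda_i[t^k]\,t\varphi'\varphi^{k/r-1}.
\]
For $k>0$, the computation from (i) turns this into $\lambda_i r\beta_k(k/r)=-(r-i)\beta_k(k/r)$, which cancels the first part. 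For $k=0$ the second part vanishes and the diagonal entry is $r-i$. This gives the stated diagonal matrix.

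The only delicate point is the bookkeeping. We must check that the convolution genuinely collapses to a single coefficient, which works because $\lambda_i$ depends only on the row and $\mu_j$ only on the column. We must also check that the index ranges cause no truncation: all matrices are lower triangular, so the intermediate sum over $\ell$ runs over $j\le\ell\le i$, exactly as in a full power-series product. Finally, the identity (\ref{OriginRelation}) must be applied with the non-integer exponent $k/r$. This is legitimate over $\Q(s_1,\ldots,s_r)$, since $\varphi^{k/r}$ is a formal power series with constant term $1$.
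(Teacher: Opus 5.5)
Your proof is correct. Its skeleton matches the paper's. Each entry collapses to the single coefficient $[t^{k}]\varphi(t)^{k/r}\psi(t)$, with $\psi(t)=r-t\varphi'(t)/\varphi(t)$, or to $\frac{r-i}{r}$ times it in part (ii). Your split $r-\ell=(r-i)+(i-\ell)$ is exactly how the paper computes $(PJ)_{i,k}$.

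The genuine difference is how you show this coefficient equals $r$ for $k=0$ and $0$ for $k>0$.

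\textbf{The paper's route.} It writes the coefficient as $\mathrm{Res}_{t=0}\,\varphi^{k/r}\psi\,dt/t^{k+1}$ and substitutes $u=t/\varphi(t)^{1/r}$. Under this substitution $\psi\,dt/t=r\,du/u$, so the form becomes $r\,du/u^{k+1}$.

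\textbf{Your route.} For $k>0$ you use $\varphi^{k/r}\psi=\frac{r}{k}\bigl(k-t\frac{d}{dt}\bigr)\varphi^{k/r}$ and then apply (\ref{OriginRelation}).

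\textbf{How they relate.} These are the same fact in two guises. Your identity says $\varphi^{k/r}\psi\,dt/t^{k+1}=-\frac{r}{k}\,d\bigl(t^{-k}\varphi^{k/r}\bigr)=-\frac{r}{k}\,d\bigl(u^{-k}\bigr)$ is exact, so its residue vanishes.

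\textbf{What each buys.} Your version needs only (\ref{OriginRelation}) and the chain rule for formal powers $\varphi^{\lambda}$. The paper's version tacitly relies on invariance of formal residues under the change of variable $t\mapsto u$. In exchange, the paper's substitution handles $i=j$ and $i\neq j$ in one stroke. It also makes visible why the exponents $\lambda_i+\mu_j=(i-j)/r$ are the right ones: $t^{-k}\varphi^{k/r}$ is just $u^{-k}$.
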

\begin{proof}
\noindent
(i)
We briefly denote the three matrices in the left hand side by $P$, $S$, $Q$.
We put $\psi(t)=r -\dfrac{t \varphi'(t)}{\varphi(t)}$.
For $1\le i,k,\ell,j \le m$, we have
\begin{align*}
P_{i,k} &= [t^{i-k}] \varphi(t)^{-(r-i)/r}, \\
S_{k,\ell} &= [t^{k-\ell}] \psi(t), \\
Q_{\ell,j} &= [t^{\ell - j}] \varphi(t)^{(r-j)/r}.
\end{align*}
By these, we obtain
\begin{align*}
(PS)_{i,\ell} &= \sum_{1\le k \le m} [t^{i-k}]\varphi(t)^{-(r-i)/r}\ [t^{k-\ell}] \psi(t) \\
&= [t^{i-\ell}] \varphi(t)^{-(r-i)/r} \psi(t) , \\
(PSQ)_{i,j} &= \sum_{1 \le \ell \le m} [t^{i-\ell}] \varphi(t)^{-(r-i)/r} \psi(t)\ [t^{\ell - j}]\varphi(t)^{(r-j)/r} \\
&= [t^{i-j}] \varphi(t)^{(i-j)/r} \psi(t) \\
&= \underset{t=0}{{\rm Residue}} \ \varphi(t)^{(i-j)/r} \psi(t) \frac{dt}{t^{i-j+1}} \\
&= \underset{u=0}{{\rm Residue}} \ r \frac{du}{u^{i-j+1}} \text{ (we put $u = \frac{t}{\varphi(t)^{1/r}}$)} \\
&= \left\{ \begin{array}{cc} 
r & ( i=j ) \\
0 & ( i\ne j)
\end{array}
\right.
.
\end{align*}
\noindent
(ii)
We denote the matrix in the right hand side by $J$.
Then, we obtain
\begin{align*}
(PJ)_{i,k} &= [t^{i-k}] \varphi(t)^{-(r-i)/r} (r-k) \\
&= [t^{i-k}] \{(i-k)+(r-i)\} \varphi(t)^{-(r-i)/r} \\
&= [t^{i-k}] \left\{t \frac{d}{dt} \varphi(t)^{-(r-i)/r} + (r-i) \varphi(t)^{-(r-i)/r}\right\} \\
&= [t^{i-k}] \frac{r-i}{r} \varphi(t)^{-(r-i)/r} \psi(t), \\
(PJQ)_{i,j} &=\sum_{1 \le k \le m} [t^{i-k}] \frac{r-i}{r} \varphi(t)^{-(r-i)/r} \psi(t)\ [t^{k-j}] \varphi(t)^{(r-j)/r} \\
&= [t^{i-j}] \frac{r-i}{r} \varphi(t)^{(i-j)/r} \psi(t) \\
&= \underset{t=0}{{\rm Residue}}\ \frac{r-i}{r} \varphi(t)^{(i-j)/r} \psi(t) \frac{dt}{t^{i-j+1}} \\
&= \underset{u=0}{{\rm Residue}}\ (r-i) \frac{du}{u^{i-j+1}} \\
&= \left\{ \begin{array}{lc} 
r-i & ( i=j ) \\
0 & ( i\ne j)
\end{array}
\right.
.
\end{align*}
\end{proof}

%
Let $R_1$ be the matrix given by (\ref{RR1R2}) and (\ref{R1bynS}).
We put $\zeta_i = \dfrac{n}{rn - (r-i)}$ for $1 \le i \le r$.
Note that $\zeta_r=\frac{1}{r}$.
%
%
\begin{lemma} \label{lemmaR1inv}
\begin{equation}\label{R1inv}
R_1^{-1} = 
\frac{1}{n}\ 
Q\left(\frac{r-1}{r},\frac{r-2}{r},\ldots,\frac{1}{r},\frac{0}{r}\right)
\begin{pmatrix}
\zeta_1 &         &         &         \\
        & \zeta_2 &         &         \\
        &         & \ddots  &         \\
        &         &         & \zeta_r
\end{pmatrix}
P\left(-\frac{2r-1}{r},-\frac{2r-2}{r},\ldots,-\frac{r+1}{r},-\frac{r}{r}\right)
\end{equation}
\end{lemma}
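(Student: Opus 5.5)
The plan is to verify the inverse formula directly: we show that
\[
R_1 \cdot \tfrac{1}{n}\, Q\!\left(\tfrac{r-1}{r},\ldots,\tfrac{0}{r}\right) Z\, P\!\left(-\tfrac{2r-1}{r},\ldots,-\tfrac{r}{r}\right) = {\rm I}_r ,
\]
where $Z={\rm diag}(\zeta_1,\ldots,\zeta_r)$. Equivalently, and more conveniently, we show that
\[
P\!\left(-\tfrac{2r-1}{r},\ldots,-\tfrac{r}{r}\right)^{-1} Z^{-1}\, Q\!\left(\tfrac{r-1}{r},\ldots,\tfrac{0}{r}\right)^{-1} = \tfrac{1}{n} R_1 .
\]
Neither of these inverses is easy to write down, however. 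So instead we conjugate $R_1$ by $P$ and $Q$, and show that
\[
P\!\left(-\tfrac{2r-1}{r},\ldots,-\tfrac{r}{r}\right)\, R_1\, Q\!\left(\tfrac{r-1}{r},\ldots,\tfrac{0}{r}\right) = n\, Z^{-1} = {\rm diag}\bigl(rn-(r-i)\bigr)_{1\le i\le r}.
\]
Both $P$ and $Q$ are unitriangular, hence invertible. Once this diagonal identity holds, inverting gives (\ref{R1inv}) immediately.

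To compute the left side, we use (\ref{R1bynS}) and (\ref{SU}), writing
\[
R_1 = n\,S_r(\varphi)\,S_r\!\left(r - \tfrac{t\varphi'}{\varphi}\right) - S_r(\varphi)\,J,
\qquad J={\rm diag}(r-1,\ldots,0).
\]
Here we use $S_r(r\varphi - t\varphi') = S_r(\varphi)\,S_r(\psi)$ with $\psi = r - t\varphi'/\varphi$, which follows from (\ref{Smult}). Since $S_r(\varphi)=U_r(1)$, property (\ref{PU}) gives
\[
P\!\left(-\tfrac{2r-1}{r},\ldots,-\tfrac{r}{r}\right) S_r(\varphi) = P\!\left(-\tfrac{r-1}{r},\ldots,-\tfrac{0}{r}\right).
\]
Thus the conjugated matrix equals
\[
n\,P\!\left(-\tfrac{r-1}{r},\ldots,-\tfrac{0}{r}\right) S_r(\psi)\, Q\!\left(\tfrac{r-1}{r},\ldots,\tfrac{0}{r}\right) \;-\; P\!\left(-\tfrac{r-1}{r},\ldots,-\tfrac{0}{r}\right) J\, Q\!\left(\tfrac{r-1}{r},\ldots,\tfrac{0}{r}\right).
\]
This is exactly the situation of Lemma \ref{lemmaUsingRes} with $m=r$; note that the parameters $-\frac{r-m}{r}$ end at $0$ when $m=r$. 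Part (i) of that lemma turns the first term into $nr\,{\rm I}_r$, and part (ii) turns the second into $J$. The result is ${\rm diag}(nr-(r-i))$, as desired.

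The main point needing care is the bookkeeping of the \emph{dummy} parameters. In $P(\lambda_1,\ldots,\lambda_m)$ the entry $\beta_{i-j}(\lambda_i)$ depends only on the row parameter, so $\lambda_1$ only ever appears with $\beta_0=1$. Similarly, in $Q$ the last parameter $\mu_m$ is irrelevant. We must check that the shift by $U_r(1)$ via (\ref{PU}) is applied on the correct side. It is: $P$ is on the left, and $S_r(\varphi)$ sits immediately to the right of $P$.

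We must also confirm that Lemma \ref{lemmaUsingRes} is valid for $m=r$, including the row $i=r$ where $\varphi^{0}$ appears. The residue computation there goes through unchanged, since $u=t\varphi^{-1/r}$ is still a valid local coordinate.

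Finally, the invertibility of ${\rm diag}(rn-(r-i))$ over $\Q(n,s_1,\ldots,s_r)$ is clear because $n$ is an indeterminate. This gives the stated formula for $R_1^{-1}$.
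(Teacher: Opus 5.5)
Your proposal is correct and is essentially the paper's own proof. The paper likewise reduces (\ref{R1inv}) to the diagonal identity $P\,R_1\,Q = nr\,{\rm I}_r - {\rm diag}(r-1,\ldots,1,0)$ (your $n\,{\rm diag}(\zeta_1,\ldots,\zeta_r)^{-1}$), absorbs $S_r(\varphi)=U_r(1)$ into $P$ via (\ref{PU}), (\ref{Smult}), (\ref{SU}), and obtains the $n$-part and the $1$-part from Lemma \ref{lemmaUsingRes} (i) and (ii) with $m=r$.
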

\begin{proof}
By (\ref{R1bynS}), the equality (\ref{R1inv}) is equivalent to
\begin{equation*}
\begin{split}
&
P\left(-\frac{2r-1}{r},-\frac{2r-2}{r},\ldots,-\frac{r+1}{r},-\frac{r}{r}\right)
\\ 
&
\times 
\left\{
n\ S_r(r \varphi(t)-t \varphi'(t)) - S_r(\varphi(t)) \begin{pmatrix} r-1 \\ {} & r-2 \\ {} & {} & \ddots \\ {} & {} & {} & 1 \\ {} & {} & {} & {} & 0 \end{pmatrix}
\right\}
\\ & \times
Q\left(\frac{r-1}{r},\frac{r-2}{r},\ldots,\frac{1}{r},\frac{0}{r}\right)
= n r {\rm I}_r - \begin{pmatrix} r-1 \\ {} & r-2 \\ {} & {} & \ddots \\ {} & {} & {} & 1 \\ {} & {} & {} & {} & 0 \end{pmatrix}
.
\end{split}
\end{equation*}
The $n$-part and the $1$-part of this equality follow from Lemma \ref{lemmaUsingRes} (i) and (ii) respectively applying (\ref{PU}), (\ref{Smult}), (\ref{S1}), (\ref{SU}) with $m=r$.
\end{proof}
%
%
\subsection{Proof of Theorem 5}
We take $r \ge 2$
and assume $s_1$, \ldots, $s_r$, $n$ are independent variables over $\Q$.
We put $\zeta_i = \dfrac{n}{rn - (r-i)}$ ($1 \le i \le r$).
Note that $\zeta_r=1/r$.
We define the following ring.
\[
{\mathscr R} = \Q[\zeta_1, \ldots, \zeta_{r-1}] \subset \Q(n)
\]
We have relations for $\zeta_i$.
\[
\zeta_i \zeta_j = \frac{1}{r} \frac{r-i}{j-i} \zeta_i + \frac{1}{r} \frac{r-j}{i-j} \zeta_j \ \ \ (\ 1 \le i,\ j\le r-1,\ i \ne j\ )
\]
Hence, we have
\[
{\mathscr R} = \Q \oplus \bigoplus_{1 \le i \le r-1} \bigoplus_{k \ge 1}\Q \zeta_i^k.
\]
Let ${\mathscr I}$ be the ideal of ${\mathscr R}$ given by
\[
{\mathscr I} = (\zeta_1) = \cdots = (\zeta_{r-1}) = (\zeta_1, \ldots, \zeta_{r-1}).
\]
Then, we have
\(
{\mathscr R}/{\mathscr I} = \Q.
\)
We put
\[
R^u = R
\begin{pmatrix}
rn-(r-1) \\
{} & rn-(r-2) \\
{} & {} & \ddots \\
{} & {} & {} & rn -0
\end{pmatrix}^{-1}.
\]
Then, the components of $R^u$ are contained in ${\mathscr R}[s_1, \ldots, s_r]$.
\[
(R^u)_{i,j} =  \{ 1 - (i-j) \zeta_j \} s_{i-j}  \ \ \ (1\le i\le r+d,\ 1\le j\le r)
\]
In particular, we have $(R^u)_{j,j}=1$ for $1 \le j \le r$.
Let $R_1^u$ and $R_2^u$ be the upper $r \times r$ matrix and the lower $d \times r$ matrix of $R^u$ respectively.
\begin{equation} \label{RuRu1Ru2}
R^u = \begin{pmatrix} R_1^u \\ R_2^u \end{pmatrix}
\end{equation}
Note that $R_2 R_1^{-1} = R_2^u R_1^{u-1}$.
Let $R^{um}$, $R_1^{um}$, $R_2^{um}$ be the matrices obtained by reducing $R^{u}$, $R_1^u$, $R_2^u$ modulo ${\mathscr I}$ respectively.
\begin{equation} \label{Rum}
R^{um} = 
\begin{pmatrix}
R_1^{um} \\ 
R_2^{rm}
\end{pmatrix}
=
\begin{pmatrix}
s_0 \\
s_1     & s_0 \\
\vdots  & \vdots  & \ddots \\
\vdots  & \vdots  & {}     & s_0 \\
s_{r-1} & s_{r-2} & \cdots & s_1 & \frac{r}{r}s_0   \\ 
s_r     & s_{r-1} & \cdots & s_2 & \frac{r-1}{r}s_1 \\
{}      & s_r     & {}           & \vdots           & \vdots \\
{}      & {}      & \ddots       & s_{r-1}          & \vdots \\
{}      & {}      & {}           & s_r              & \frac{1}{r}s_{r-1} 
\end{pmatrix}
\end{equation}
%
%
\begin{lemma}\label{lemmaR2umR1uminv}
\begin{enumerate}
\item
\ \ \ 
$
\begin{pmatrix}
- R_2^{um} R_1^{um-1} & {\rm I}_d
\end{pmatrix}
V = 0
$
\item
The rightmost column of $R_2 R_1^{-1} - R_2^{um} R_1^{um-1}$ is $0$.
\end{enumerate}
\end{lemma}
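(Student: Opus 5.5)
The plan is to read off the column structure of $R^{um}$ and $R^u$ directly from the explicit formula $(R^u)_{i,j}=\{1-(i-j)\zeta_j\}s_{i-j}$ and from the display (\ref{Rum}). Both parts then become essentially formal consequences of block structure. No residue computation or PPM argument should be needed here.

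For (i), I first compare the columns of $R^{um}$ with those of $V$. Reducing modulo ${\mathscr I}$ kills every $\zeta_j$ with $1\le j\le r-1$, so the $j$-th column of $R^{um}$ has entries $s_{i-j}$ for $1\le j\le r-1$. By (\ref{matV}) this is exactly the $j$-th column of $V$, so we can write $R^{um}=\begin{pmatrix} V & w\end{pmatrix}$, where $w$ is the last column. Next, $R_1^{um}$ is lower triangular with all diagonal entries equal to $1$ (this follows from $(R^u)_{j,j}=1$ and $s_{i-j}=0$ for $i<j$), so it is invertible. Then
\[
\begin{pmatrix} -R_2^{um}R_1^{um-1} & {\rm I}_d\end{pmatrix} R^{um} = -R_2^{um}R_1^{um-1}R_1^{um}+R_2^{um}=0 .
\]
Restricting this identity to the first $r-1=d$ columns gives $\begin{pmatrix} -R_2^{um}R_1^{um-1} & {\rm I}_d\end{pmatrix}V=0$.

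For (ii), I use the identity $R_2R_1^{-1}=R_2^uR_1^{u-1}$ noted above, so it suffices to compare the last columns of $R_2^uR_1^{u-1}$ and $R_2^{um}R_1^{um-1}$. Since $R_1^u$ is lower triangular, the entries of its last column above the diagonal vanish, and its $(r,r)$ entry is $1$; hence its last column is $e_r$. Therefore $R_1^{u-1}e_r=e_r$, and the same holds for $R_1^{um}$. It follows that the last column of $R_2^uR_1^{u-1}$ is just the last column of $R_2^u$, and likewise for the reduced matrices.

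The last column of $R^u$ has entries $\{1-(i-r)\zeta_r\}s_{i-r}=\frac{2r-i}{r}s_{i-r}$, because $\zeta_r=1/r$. These involve none of $\zeta_1,\ldots,\zeta_{r-1}$, so they are unchanged by reduction modulo ${\mathscr I}$; indeed they match the last column displayed in (\ref{Rum}). Hence the two last columns agree, which proves (ii).

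The only point needing care is confirming that $\zeta_r$ is a genuine constant ($1/r$) rather than an element of ${\mathscr I}$. This holds because $\zeta_r$ lies outside the generators of ${\mathscr R}$, and it is precisely why the last column survives reduction unchanged.
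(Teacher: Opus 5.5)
Your proof is correct and follows the same route as the paper. For (i), the paper also uses that $V$ is $R^{um}$ with its rightmost column deleted: it writes $R_1^{um}=S_r(\varphi(t))$, $V_1=\langle S_r(\varphi(t))\rangle$ and $V_2=\langle R_2^{um}\rangle$. For (ii), the paper only asserts that both rightmost columns equal $\bigl(\frac{r-1}{r}s_1,\ldots,\frac{1}{r}s_{r-1}\bigr)^{T}$, and your observations $R_1^{u}e_r=e_r$ and $\zeta_r=1/r\in\Q$ supply the justification. (Your closing remark about $\zeta_r$ lying ``outside the generators'' is imprecise. The actual reason is that $\zeta_r=1/r\in\Q$ and $\mathscr R\to\mathscr R/\mathscr I=\Q$ restricts to the identity on $\Q$.)
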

\begin{proof}
\noindent
(i)
Let $V_1$ and $V_2$ be the upper $r \times d$ matrix and the lower $d \times d$ matrix in $V$ respectively.
In general, for a matrix A, the matrix obtained by deleting the rightmost column of $A$ is denoted by $\langle A \rangle$.
Then, for matrices $A$ and $B$ having product $AB$, we have $\langle A B \rangle = A \langle B \rangle$. 
Then, we obtain
\begin{align*}
\begin{pmatrix} - R_2^{um} R_1^{um-1} & {\rm I}_d \end{pmatrix} V 
= &
- R_2^{um} R_1^{um-1} V_1 + V_2
\\
= &
- R_2^{um} S_r(\varphi(t))^{-1} \langle S_r(\varphi(t)) \rangle + V_2
\\
= &
- \langle R_2^{um} S_r(\varphi(t))^{-1} S_r(\varphi(t)) \rangle + V_2
\\
= &
- \langle R_2^{um} \rangle + V_2
\\
= &
- \begin{pmatrix} s_r & \cdots & s_2 \\ {} & \ddots & \vdots \\ {} & {} & s_r \end{pmatrix}
+ \begin{pmatrix} s_r & \cdots & s_2 \\ {} & \ddots & \vdots \\ {} & {} & s_r \end{pmatrix}
\\
= &
\ 0.
\end{align*}
\noindent
(ii)
Both of the rightmost columns of $R_2^u {R_1^u}^{-1}$ and $R_2^{um} R_1^{um-1}$ are 
\[
\begin{pmatrix}
\dfrac{r-1}{r} s_1 \\
\dfrac{r-2}{r} s_2 \\
\vdots \\
\dfrac{1}{r} s_{r-1}
\end{pmatrix}.
\]
\end{proof}
%

%
By Lemma \ref{lemmaR2umR1uminv},
it is sufficient to show
\[
\begin{pmatrix}
-R_2 R_1^{-1} + R_2^{um} R_1^{um-1} & {\rm O}_d
\end{pmatrix}
V = B_r \ Q_r \ Z_{r,n} \ P_r,
\]
i.e.,
\begin{equation}\label{target2}
(-R_2 R_1^{-1} + R_2^{um} R_1^{um-1}) \ V_1 = B_r \ Q_r \ Z_{r,n} \ P_r,
\end{equation}
to show (\ref{target1}).
Here, ${\rm O}_d$ denotes the $d \times d$ zero matrix.
In general, for $d\times d$ matrices $A$ and $B$, we have
\[
\begin{pmatrix}
\multicolumn{3}{c}{\multirow{3}{*}{$A$}}                  & 0      \\
{}               & {}                  & {}               & \vdots \\
{\phantom{\ast}} & {\phantom{\cdots}}  & {\phantom{\ast}} &  0      
\end{pmatrix}
\begin{pmatrix}
\multicolumn{3}{c}{\multirow{3}{*}{$B$}} & 0      \\
{}   & {}     & {}                       & \vdots \\
{}   & {}     & {}                       & 0      \\
\ast & \cdots & \ast                     & \ast
\end{pmatrix}
=
\begin{pmatrix}
\multicolumn{3}{c}{\multirow{3}{*}{$AB$}}                 & 0      \\
{}               & {}                  & {}               & \vdots \\
{\phantom{\ast}} & {\phantom{\cdots}}  & {\phantom{\ast}} &  0      
\end{pmatrix}.
\]
This implies that the rightmost column of $-R_2 + R_2^{um} R_1^{um-1} R_1$ is zero and, by Lemma \ref{lemmaR1inv}, we obtain the following.
\begin{align*}
&
(-R_2 R_1^{-1} + R_2^{um} R_1^{um-1}) V_1 
\\
=
&
(-R_2+R_2^{um} R_1^{rm-1} R_1) \; \frac{1}{n}\;  
Q\left(\frac{r-1}{r},\frac{r-2}{r},\cdots,\frac{0}{r}\right) 
\begin{pmatrix} \zeta_1 \\ {} & \zeta_2 \\ {} & {} & \ddots \\ {} & {} & {} & \zeta_r \end{pmatrix}
\\
& \times
P\left(-\frac{2r-1}{r},-\frac{2r-2}{r},\cdots,-\frac{r}{r}\right)
V_1
\\
=
&
\langle -R_2+R_2^{um} R_1^{rm-1} R_1\rangle \;\frac{1}{n}\; 
Q\left(\frac{r-1}{r},\frac{r-2}{r},\cdots,\frac{1}{r}\right) 
\begin{pmatrix} 
\zeta_1 \\ 
{} & \zeta_2{} \\
{} & {} & \ddots \\
{} & {} & {} & \zeta_{r-1} 
\end{pmatrix}
\\
& \times
P\left(-\frac{2r-1}{r},\frac{2r-2}{r},\cdots,-\frac{r+1}{r}\right)
S_d (\varphi(t))
\end{align*}
Therefore, by (\ref{PU}), (\ref{UQ}), (\ref{SU}), 
it is sufficient to show
\begin{equation}\label{target3}
\langle ( -R_2 + R_2^{um} R_1^{um-1} R_1) S_r(\varphi(t)) \rangle = n \ B_r,
\end{equation}
to show (\ref{target2}).
It is immediate to verify the $1$-part of (\ref{target3}).
The $n$-part of (\ref{target3}) follows from the following Proposition.
%
%
\begin{prop}
\begin{align}
&-\nonumber 
\begin{pmatrix}
0 s_r & 1 s_{r-1} & \cdots & (r-2)s_2 \\
{}    & 0 s_r     & \ddots & \vdots   \\
{}    &           & \ddots & 1s_{r-1} \\
{}    &           &        & 0s_r
\end{pmatrix}
\begin{pmatrix}
s_0     \\
s_1     & s_0    \\
\vdots  & \ddots & \ddots \\
s_{r-2} & \cdots & s_1    & s_0
\end{pmatrix}
\\
&+ \label{formulaBr} 
\begin{pmatrix}
s_r & s_{r-1} & \cdots & s_2     \\
{}  & s_r     & \ddots & \vdots  \\
{}  & {}      & \ddots & s_{r-1} \\
{}  & {}      & {}     & s_r
\end{pmatrix}
\begin{pmatrix}
rs_0 \\
(r-1)s_1 & rs_0 \\
\vdots & \ddots & \ddots \\
2s_{r-2} & \cdots & (r-1)s_1 & rs_0
\end{pmatrix}
\\
&-\frac{1}{r} \nonumber
\begin{pmatrix}
(r-1)s_1 \\
(r-2)s_2 \\
\vdots \\
1 s_{r-1}
\end{pmatrix}
\begin{pmatrix}
(r-1)s_{r-1} & (r-2)s_{r-2} & \ldots & 1s_1
\end{pmatrix}
= 
B_r
\end{align}
\end{prop}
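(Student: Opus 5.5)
The plan is to prove (\ref{formulaBr}) by comparing bivariate generating functions. Both sides are $d\times d$ matrices with $d=r-1$. For a $d\times d$ matrix $X$, put
\[
\Gamma_X(x,y)=\begin{pmatrix}1 & x & \cdots & x^{d-1}\end{pmatrix} X \begin{pmatrix}1 & y & \cdots & y^{d-1}\end{pmatrix}^{T}.
\]
Since $B_r={\rm J}_d\,{\rm Bez}(f',f-\frac1r xf')$, we get $\Gamma_{B_r}(x,y)=x^{d-1}\,\beta(1/x,y)$, where $\beta(x,y)$ is the B\'ezoutian generating function (\ref{BezDef}). Expanding,
\[
f'(x)\Bigl(f(y)-\tfrac{y}{r}f'(y)\Bigr)-f'(y)\Bigl(f(x)-\tfrac{x}{r}f'(x)\Bigr)=f'(x)f(y)-f(x)f'(y)+\tfrac{1}{r}(x-y)f'(x)f'(y).
\]
Hence
\[
\beta(x,y)=\frac{f'(x)f(y)-f(x)f'(y)}{x-y}+\frac1r f'(x)f'(y),
\]
which splits $B_r$ into a B\'ezoutian of $f$ and $f'$ plus a rank-one term. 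This suggests matching the rank-one term with the third summand of (\ref{formulaBr}), and the two Toeplitz products with the classical B\'ezoutian.

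\emph{Step 1 (rank-one term).} The row vector $((r-1)s_{r-1},\dots,1s_1)$ and the column vector $((r-1)s_1,\dots,1s_{r-1})^T$ are the coefficient vectors of $f'$ and of its reversal $x^{r-1}f'(1/x)$, with the terms $rx^{r-1}$ and $s_{r-1}$ accounted for separately. So $-\frac1r(\cdot)(\cdot)$ should produce $\frac1r\, x^{d-1}f'(1/x)f'(y)$ up to boundary terms. I will keep careful track of these boundary terms, since they must cancel against the edge contributions coming from Step 2.

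\emph{Step 2 (Toeplitz part).} The first two summands have the shape $-T_U(a)T_L(b)+T_U(c)T_L(a')$ of upper times lower triangular Toeplitz matrices. The entries $k\,s_{r-k}$ are the coefficients of $x^k$ in $xf'(x)$, and the entries $(r-i)s_i$ are the coefficients of $x^{r-i}$ in $xf'(x)$. The summand is therefore a Gohberg--Semencul (Lander) type expression for the B\'ezoutian of the pair $f$ and $xf'$, written in reversed variables. I will verify this by the standard identity: for such truncated products, the $(i,j)$ entry is $\sum_k a_{i,k}b_{k,j}$, and the generating function becomes $[\,\cdot\,]$ of a product of power series truncated to degrees $<d$. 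Concretely, I will show that $\Gamma$ of the first two summands equals
\[
x^{d-1}\left.\frac{F(x)G(y)-F(y)G(x)}{x-y}\right|_{x\to1/x}
\]
for $F=f$ and $G=xf'$, modulo explicit corrections involving the top coefficients.

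\emph{Step 3 (assembly).} Using $\dfrac{xf'(x)f(y)-yf'(y)f(x)}{x-y}=x\,\dfrac{f'(x)f(y)-f(x)f'(y)}{x-y}+f(x)f'(y)$, I will rewrite the Step 2 expression in terms of ${\rm Bez}(f',f)$ plus an explicit product term. Combining with Step 1 should give exactly $\beta$. Since all identities are polynomial in $s_1,\dots,s_r$ with rational coefficients, it suffices to work over $\Q$.

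I expect the main obstacle to be the bookkeeping in Steps 2--3. The matrices are truncated to size $d=r-1$ rather than $r$, and the columns involving $s_0$, $s_r$ and the factor $\frac{r}{r}$ are dropped. The boundary terms left over from this truncation must cancel exactly against the edge terms of the rank-one correction and the extra product $f(x)f'(y)$. If a direct generating-function match gets messy, I will first check entrywise: compute the $(i,j)$ entry of the left side as an explicit sum $\sum_k c_k s_{\ast}s_{\ast}$ and compare with the coefficient of $x^{d-i}y^{j-1}$ in $\beta$, which is itself a finite convolution of the $s_k$.
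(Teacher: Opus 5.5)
Your overall strategy (compare bivariate generating functions and track boundary terms) is the same as the paper's. However, Step~1 rests on a misidentification, and as a consequence Step~3 cannot close.

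The row vector $((r-1)s_{r-1},\dots,1s_1)$ is not the coefficient vector of $f'$; that vector would have entries $js_{r-j}$. Its $j$-th entry is $(r-j)s_{r-j}=[y^j]\bigl(rf(y)-yf'(y)\bigr)$, so it encodes $r\bigl(f-\frac1r yf'\bigr)$, the \emph{second} polynomial of the B\'ezout pair. Only the column vector encodes $f'$. Use the paper's normalization $\Gamma^{\ast}_X(x,y)=(x^{d-1},\dots,1)\,X\,(1,y,\dots,y^{d-1})^{T}$, for which $\Gamma^{\ast}_{B_r}=\beta(x,y)$ with no reversal, and recall $s_0=1$. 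Then the third summand gives
\[
-\frac1r\bigl(f'(x)-rx^{r-1}\bigr)\frac{rf(y)-yf'(y)-rs_r}{y}
=-\frac{(f'(x)-rx^{r-1})(f(y)-s_r)}{y}+\frac1r f'(x)f'(y)-x^{r-1}f'(y).
\]
Besides $\frac1r f'(x)f'(y)$, this contains the full cross term $-f'(x)f(y)/y$, which is not a boundary term. Likewise, the product term produced by your Step~3 splitting of the B\'ezoutian of $(f,xf')$ is not a boundary term. Under your accounting (third summand $=\frac1r f'f'$ up to edge terms) it has nothing to cancel against. Your Step~2 formula also has the wrong $y$-degree: the generating function of a $d\times d$ matrix has $y$-degree at most $r-2$, whereas that B\'ezoutian has $y$-degree $r-1$. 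So a division by $y$ is missing, not merely edge corrections.

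The repair stays inside your framework. For $U$ upper and $L$ lower triangular, $(UL)_{ij}$ only involves indices $k\ge\max(i,j)$. Hence the first two summands are exactly the trailing $(r-1)\times(r-1)$ block of the $r\times r$ matrix $W$ built from the same Toeplitz data extended by one entry. The Gohberg--Semencul identity you invoke gives $\sum_{i,j=1}^{r}W_{ij}x^{r-i}y^{j-1}=\Omega:=\frac{xf'(x)f(y)-yf'(y)f(x)}{x-y}$. Delete the row $i=1$ and the column $j=1$, divide by $y$, and write $\Omega=y\,\mathrm{Bez}_0+f'(x)f(y)$ with $\mathrm{Bez}_0=\frac{f'(x)f(y)-f(x)f'(y)}{x-y}$. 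The first two summands then give
\[
\frac1y\Bigl[\Omega-x^{r-1}\bigl(rf(y)-yf'(y)\bigr)-s_rf'(x)+rs_rx^{r-1}\Bigr]
=\mathrm{Bez}_0+\frac{(f'(x)-rx^{r-1})(f(y)-s_r)}{y}+x^{r-1}f'(y).
\]
Adding the previous display leaves $\mathrm{Bez}_0+\frac1r f'(x)f'(y)=\beta(x,y)$. So it is the cross term of the rank-one summand that absorbs the product term of the Toeplitz part.

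Corrected this way, your route is shorter than the paper's. The paper multiplies by $(x-y)$, evaluates each truncated product through a general identity for truncated convolutions, and then needs the index-pairing cancellation $k+k'=i+j-r$. The trailing-block observation replaces that bookkeeping.
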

%
%
%
\begin{proof}
%
%
We denote the left hand side of (\ref{formulaBr}) by $LHS$.
We also denote the first , the second and the third term of $LHS$ by $LHS_1$, $LHS_2$ and $LHS_3$ respectively.
By (\ref{BezDef}) and (\ref{matBr}), we shall show
\begin{equation*}
%
\begin{pmatrix}
x^{d-1} & x^{d-2} & \cdots & 1
\end{pmatrix}
{LHS}
%
\begin{pmatrix}
1 \\
y \\
\vdots \\
y^{d-1}
\end{pmatrix}
(x-y)
= f'(x) \left\{ f(y) - \frac{1}{r} y f'(y) \right\} - f'(y) \left\{ f(x) - \frac{1}{r} x f'(x) \right\}.
\end{equation*}
%
%
We remark that, in general, for a polynomial $\Phi(x,\eta)$ of degree $m$ of the form
\[
\Phi(x,\eta) = \sum_{0 \le i \le m,\, 0 \le j \le m} c_{i,j} x^i \eta^j,
\]
we have
\begin{align}
& \nonumber
\left\{ \sum_{0\le k \le m} \left( \sum_{0\le i \le k,\, 0 \le j \le k} c_{i,j} x^i \eta^j \right) (x\eta)^{m-k} \right\} (x\eta - 1)
\\
& = \nonumber
- \Phi(x,\eta)
+ \sum_{0\le i \le m,\, 0\le j \le m,\, \max(i,j)=m} \left( \sum_{0\le k \le \min(i,j)} c_{i-k,j-k} \right) x^{i+1} \eta^{j+1}.
\end{align}
%
%
By this equality, putting $\eta = 1/y$, we obtain
\begin{align}
& \nonumber
\begin{pmatrix}
x^{d-1} & x^{d-2} & \cdots & 1
\end{pmatrix}
{LHS_1}
\begin{pmatrix}
1 \\
y \\
\vdots \\
y^{d-1}
\end{pmatrix}
(x-y)
\\
&= \nonumber
-y^d
\left\{ \sum_{0 \le k \le d-1}  \left( \sum_{0 \le i \le k} i s_{r-i} x^i \right) \left( \sum_{0 \le j \le k} s_j \eta^j \right) (x\eta)^{d-1-k} \right\} (x\eta-1)
\\
&= \nonumber
\left(x f'(x) - r s_0 x^r - (r-1) s_1 x^{r-1}\right) \frac{1}{y} \left( f(y) - s_{r-1} y - s_r \right) 
\\
&\phantom{=}+ \nonumber
\sum_{0 \le i \le d-1,\, 0 \le j \le d-1,\, \max(i,j)=d-1} \left( \sum_{0 \le k \le \min(i,j)} (-i+k) s_{r-i+k} s_{j-k} \right) x^{i+1} y^{r-2-j} .
\end{align}
%
%
Similarly, we obtain
\begin{align}
& \nonumber
\begin{pmatrix}
x^{d-1} & x^{d-2} & \cdots & 1
\end{pmatrix}
{LHS_2}
\begin{pmatrix}
1 \\
y \\
\vdots \\
y^{d-1}
\end{pmatrix}
(x-y)
\\
&= \nonumber
y^d
\left\{ \sum_{0 \le k \le d-1}  \left( \sum_{0 \le i \le k} s_{r-i} x^i \right) \left( \sum_{0 \le j \le k} (r-j) s_j \eta^j \right) (x\eta)^{d-1-k} \right\} (x\eta-1)
\\
&= \nonumber
- \left(f(x) - s_0 x^r - s_1 x^{r-1} \right) \left( f'(y) - s_{r-1} \right) 
\\
&\phantom{=}+ \nonumber
\sum_{0 \le i \le d-1,\, 0 \le j \le d-1,\, \max(i,j)=d-1} \left( \sum_{0 \le k \le \min(i,j)} (r-j+k) s_{r-i+k} s_{j-k} \right) x^{i+1} y^{r-2-j} .
\end{align}
%
%
We have the sum
\[
\sum_{0 \le i \le d-1,\, 0 \le j \le d-1,\, \max(i,j)=d-1} \left( \sum_{0 \le k \le \min(i,j)} (r-i-j+2k) s_{r-i+k} s_{j-k} \right) x^{i+1} y^{r-2-j},
\]
of the two second terms in the right hand sides of the above two equalities.
For indices $k$ and $k'$ such that $k+k'=i+j-r$, we have the cancellation
\[
(r-i-j+2k) s_{r-i+k} s_{j-k} + (r-i-j+2k') s_{r-i+k'} s_{j-k'} = 0.
\]
By this cancellation, this sum is equal to
\begin{align}
& \nonumber
s_{r-1} \left( x f'(x) - f(x) - (r-1) s_0 x^r + s_r \right) 
+ s_r \left( f'(x) - s_{r-1} \right)
\\ 
& \nonumber
+ s_1 x^{r-1} \left( (r-1) \frac{1}{y} ( f(y) - s_r ) - f'(y) + s_0 y^{r-1} \right)
+ s_0 x^{r-1} \left( r f(y) - y f'(y) - s_1 y^{r-1} \right)
\\
& \nonumber
- (r-2) s_1 s_{r-1} x^{r-1} - r s_0 s_r x^{r-1}.
\end{align}
%
%
We also have
\begin{align}
& \nonumber
\begin{pmatrix}
x^{d-1} & x^{d-2} & \cdots & 1
\end{pmatrix}
{LHS_3}
\begin{pmatrix}
1 \\
y \\
\vdots \\
y^{d-1}
\end{pmatrix}
(x-y)
\\
&= \nonumber
- \frac{1}{r} \left( f'(x) - r s_0 x^{r-1} \right) \frac{1}{y} \left( r f(y) - y f'(y) - r s_r \right) (x-y).
\end{align}
By all of these computations, we obtain the assertion.
\end{proof}
This completes the proof of Theorem 5.
%
%
%
\section{Two Similar Experimental Equalities}
In this Section, we shall give two experimental equalities.
These two have some similarities.
%
%
\subsection{The first experimental equality}
Let $p$ be a prime.
We put
\begin{equation*}
{\mathscr E} (p) = \left\{ 
(r,e,d) \in \Z^3 \left|
\begin{array}{l}
r \ge 2,\ 0 \le e \le p-1,\ 0 \le d \le r-1,\ d \le p,\ r-1-d \le p, \\
d(p-1) \le re \le (d+1)(p-1)
\end{array}
\right.
\right\}.
\end{equation*}
Then, the set of $(r,e)$ for $(r,e,d)\in{\mathscr E}(p)$ is illustrated as follows.
\begin{center}
\includegraphics{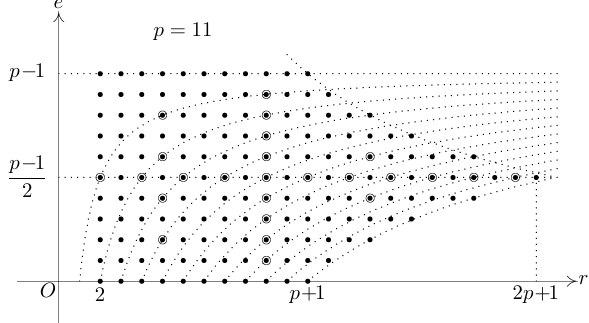}
\captionof{figure}{The projection of ${\mathscr E}(p)$ for $p=11$}
\end{center}
Here, points $(r,e)$ with two $(r,e,d)\in{\mathscr E}(p)$ are shown as circled dots.
For $(r,e,d)\in{\mathscr E}(p)$, we take a generic polynomial $f(x)=s_0 x^r + s_1 x^{r-1} + \cdots + s_r$ 
of degree $r$ in characteristic $p$ and define the matrix $M_d(f(x)^e)$ by (\ref{defMd}).
For $(r,e,d)\in{\mathscr E}(p)$, we have
\begin{flalign*}
(r-1,e,d-1) \in {\mathscr E}(p) & \Longleftrightarrow  r \ge 3,\ d(p-1)-(r-1)e \ge 0,\ d \ge 1, \\
(r-1,e,d) \in {\mathscr E}(p) & \Longleftrightarrow  r \ge 3,\ d(p-1)-(r-1)e \le 0,\ d \le r-2 . 
\end{flalign*}
These two cases are not exclusive to each other.
When $(r-1,e,d-1) \in {\mathscr E}(p)$, the last row of $M_d(f(x)^e)$ is divisible by $s_0^{d(p-1)-(r-1)e}$, and we have
\begin{equation}\label{recursive1}
\left. \frac{\det M_d(f(x)^e)}{s_0^{d(p-1)-(r-1)e}} \right|_{s_0 = 0} 
=
(-1)^{d-1} \binom{e}{re-d(p-1)} s_1^{re-d(p-1)} \det M_{d-1}(f_1(x)^e).
\end{equation}
Here, we put $f_1(x)=s_1 x^{r-1} + \cdots + s_r$.
When $(r-1,e,d)\in{\mathscr E}(p)$, we have
\begin{equation}\label{recursive2}
\left. \det M_d(f(x)^e) \right|_{s_0 = 0} = \det M_d(f_1(x)^e).
\end{equation}
For the discriminant $\Delta(f(x))$ of $f(x)$,
we remark that
\begin{equation*}
\left. \Delta(f(x))\right|_{s_0=0} = s_1^2 \Delta(f_1(x)).
\end{equation*}
\begin{lemma}
\phantom{This is a phantom line}
\begin{enumerate}
\item For $(r,e,d)\in{\mathscr E}(p)$, we have $\det M_d(f(x)^e) \ne 0$.
\item For $(r,e,d)\in{\mathscr E}(p)$, $\det M_d(f(x)^e)$ is divisible exactly by $s_0^{\max(0,d(p-1)-(r-1)e)}$.  
\item For $(r,e,d)\in{\mathscr E}(p)$, $\det M_d(f(x)^e)$ is divisible exactly by $\Delta(f(x))^{\max(0,e-(p-1)/2)}$.
\end{enumerate}
\end{lemma}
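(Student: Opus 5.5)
We prove all three assertions by induction on $r$, specializing $s_0=0$ and using the recursions (\ref{recursive1}) and (\ref{recursive2}).

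\textbf{Reduction to $r-1$.} The trivial case $d=0$ gives $\det M_0=1$, so assume $d\ge1$ and $r\ge3$. Then at least one of $(r-1,e,d-1)$ and $(r-1,e,d)$ lies in ${\mathscr E}(p)$.
\begin{itemize}
\item If $d\le r-2$, the two criteria displayed before (\ref{recursive1}) differ only in the sign of $d(p-1)-(r-1)e$, so one of them holds.
\item If $d=r-1$, then $d(p-1)-(r-1)e=(r-1)(p-1-e)\ge0$, so $(r-1,e,d-1)\in{\mathscr E}(p)$.
\end{itemize}
We must also check that the side conditions $d\le p$ and $r-1-d\le p$ are inherited; this is immediate.

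The base case is $r=2$, where $d\in\{0,1\}$. It is a direct computation: $\det M_1(f(x)^e)=c_{p-1}$ for $(s_0x^2+s_1x+s_2)^e$ with $e\ge(p-1)/2$. Here we also check the exact power of $\Delta=s_1^2-4s_0s_2$ by hand; for example, we compare with the case $e=p-1$ already treated in Theorem 1.

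\textbf{Proof of (i) and (ii).}
\begin{itemize}
\item Each $c_j$ is a form of degree $e$ in $s_0,\dots,s_r$ of weight $re-j$. Every monomial $s_0^{a_0}\cdots s_r^{a_r}$ in $c_j$ therefore satisfies $j=ra_0+(r-1)a_1+\cdots$, which forces $a_0\ge j-(r-1)e$.
\item All entries of the $i$-th row have $j\ge i(p-1)$. Hence the last row, and in fact the determinant, is divisible by $s_0^{\max(0,\,d(p-1)-(r-1)e)}$.
\item \emph{Case $(r-1,e,d-1)\in{\mathscr E}(p)$.} By (\ref{recursive1}), the quotient evaluated at $s_0=0$ equals $\pm\binom{e}{re-d(p-1)}s_1^{re-d(p-1)}\det M_{d-1}(f_1(x)^e)$. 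The binomial coefficient is nonzero in $\F_p$, because $0\le re-d(p-1)\le e\le p-1$; the upper bound uses $(r-1)e\le d(p-1)$. The last factor is nonzero by induction for (i).
\item \emph{Case $(r-1,e,d)\in{\mathscr E}(p)$.} By (\ref{recursive2}), $\det M_d(f(x)^e)|_{s_0=0}=\det M_d(f_1(x)^e)\neq0$.
\end{itemize}
In either case $\det M_d(f(x)^e)\ne0$, and no higher power of $s_0$ divides it, which gives (ii).

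\textbf{Proof of (iii), divisibility.} Let $a=\max(0,e-(p-1)/2)$ and assume $a>0$.
\begin{itemize}
\item For monic $f$, Lemma \ref{lemma2} applies, since ${\mathscr E}(p)\cap\{e>(p-1)/2,\ d\ge1\}\subset{\mathscr D}(p)$. It gives divisibility of $\det M_d(f(x)^e)$ by $\delta^{2a}$, that is, by $\Delta^{a}$.
\item Passing to generic $s_0$: the substitution $s_i\mapsto s_i/s_0$ rescales $\det M_d$ and $\Delta$ by powers of $s_0$. Hence $\Delta^a$ divides $s_0^N\det M_d(f(x)^e)$ for some $N$.
\item Since $s_0\nmid\Delta$ (as $\Delta|_{s_0=0}=s_1^2\Delta(f_1)\ne0$), the factor $s_0^N$ can be cancelled, giving $\Delta^a\mid\det M_d(f(x)^e)$. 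If one prefers to argue via irreducible factors, the same conclusion follows from the known irreducibility of the generic discriminant.
\end{itemize}

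\textbf{Proof of (iii), exactness.} Suppose $\Delta^{a+1}$ divides $\det M_d(f(x)^e)$. Because $\Delta$ is prime to $s_0$, it also divides $\det M_d(f(x)^e)/s_0^{k}$, where $k$ is the exponent from (ii). Set $s_0=0$, using $\Delta|_{s_0=0}=s_1^2\Delta(f_1)$.
\begin{itemize}
\item By (\ref{recursive1}) or (\ref{recursive2}), $\Delta(f_1)^{a+1}$ divides $s_1^{c}\det M_{d'}(f_1(x)^e)$ for some $c\ge0$, with $d'=d-1$ or $d'=d$ respectively.
\item The discriminant $\Delta(f_1)$ is prime to its leading coefficient $s_1$, by the same specialization identity one degree lower.
\item Hence $\Delta(f_1)^{a+1}\mid\det M_{d'}(f_1(x)^e)$. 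The exponent $a$ depends only on $e$, so this contradicts the induction hypothesis for $(r-1,e,d')\in{\mathscr E}(p)$.
\end{itemize}

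\textbf{Expected main obstacle.} The hardest step is the base case $r=2$ together with the boundary bookkeeping: verifying that the side conditions of ${\mathscr E}(p)$ descend correctly, and that the $d=0$ and $e\le(p-1)/2$ corners are consistent with the claimed exponents. We will check these corners explicitly.
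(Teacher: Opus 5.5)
Your proposal is correct and follows the same route as the paper: induction on $r$ through (\ref{recursive1}) and (\ref{recursive2}), Lemma~\ref{lemma2} for the divisibility by $\Delta(f(x))^{e-(p-1)/2}$, and Theorem~1 for the base case $r=2$. Your homogenization argument using $s_0\nmid\Delta(f(x))$ simply makes explicit the passage from monic to generic $f$ that the paper leaves implicit.

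Two small remarks. First, the base case is not actually an obstacle: every $(2,e,1)$ with $e>(p-1)/2$ lies in ${\mathscr B}_0(p)$, so Theorem~1 covers all of it after homogenizing, and $c_{p-1}=s_0^{(p-1)/2}$ when $e=(p-1)/2$. Second, your claim that every row has indices $\ge i(p-1)$ is slightly off, since row $i$ runs over $ip-d,\dots,ip-1$; only the last row has all indices $\ge d(p-1)$, but that is the only row you use.
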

\begin{proof}
When $e-(p-1)/2>0$, $\det M_d(f(x)^e)$ is divisible at least by $\Delta(f(x))^{e-(p-1)/2}$ (Lemma \ref{lemma2}).
The assertions (i) and (iii) for $r=2$ are obtained in Theorem 1.
For $r \ge 3$, we apply (\ref{recursive1}) and (\ref{recursive2}) inductively on $r$ to obtain (i) and (iii). 
\end{proof}

For $(r,e,d) \in {\mathscr E}(p)$, we put $\ehat = (p-1)-e$ and $\dhat = (r-1)-d$.
Then, 
\begin{equation*}
{\mathscr E}(p) \ni (r,e,d) \longmapsto (r,\ehat,\dhat) \in {\mathscr E}(p)
\end{equation*}
is an involutive bijection.

The first experimental equality is that, for $(r,e,d)\in{\mathscr E}(p)$, 
\begin{align}
&\frac{ \det M_d(f(x)^e ) } { \det M_{\widehat{d}}(f(x)^{\widehat{e}}) }
=
\varepsilon_{p,r,e,d}\, s_0^{d(p-1)-(r-1)e}\,  \Delta(f(x))^{e-(p-1)/2}, \label{ExperimentalEquality1}\\
&\varepsilon_{p,r,e,d}
=
(-1)^{\frac{r(r+1)}{2} (1+e) + (r+1) d } \,\{ (d+1)(p-1)-re\}!\, e!^{r}. \notag
\end{align}
%
%
We have the following properties, which fit into the equality (\ref{ExperimentalEquality1}).
\begin{align*}
&
\varepsilon_{p,r,e,d} \, \varepsilon_{p,r,\ehat,\dhat} = 1 \\
&
s_0^{d(p-1)-(r-1)e} \, s_0^{\dhat(p-1)-(r-1)\ehat} = 1 \\
&
\Delta(f(x))^{e-(p-1)/2} \, \Delta(f(x))^{\ehat-(p-1)/2} = 1 
\end{align*}
%
%
The equality of degrees of the both sides of (\ref{ExperimentalEquality1}) is as follows.
For the degree such that $\deg(s_i)=i$, we have
\begin{equation*}
\left\{ red - \frac{d(d+1)}{2} (p-1)  \right\} - 
\left\{ r\ehat\dhat - \frac{\dhat(\dhat+1)}{2} (p-1) \right\} 
= r(r-1) \left(e-\frac{p-1}{2}\right),
\end{equation*}
and for the degree such that $\deg(s_i)=1$, we have
\begin{equation*}
ed - \ehat\dhat = \{ d(p-1) - (r-1)e \} + 2(r-1) \left(e-\frac{p-1}{2}\right).
\end{equation*}

Since we have 
\begin{equation*}
{\mathscr D}(p) \setminus {\mathscr D}_0 (p) = \{ (r,e,d) \in {\mathscr E}(p)| e-(p-1)/2 > 0 \},
\end{equation*}
Theorem 1 is a special case of (\ref{ExperimentalEquality1}).
Also, when $e-(p-1)/2=0$, the equality (\ref{ExperimentalEquality1}) can be easily verified.

The equality (\ref{ExperimentalEquality1}) remains unproven.
%
%
\subsection{The second experimental equality}
In \cite{Glynn}, Glynn proved, among others, the following theorem, for which we give an elementary proof.

%
%
\begin{thrm}[Glynn \cite{Glynn}, Theorem 4.1]
Let $p$ be a prime and $A=\left( a_{i,j} \right)_{1 \le i,j \le r}$ a square matrix in characteristic $p$.
Then, 
\begin{equation}\label{GlynnEqualityCoeff}
\left[ \prod_{1 \le j \le r} X_j^{p-1}\right] \left\{ \prod_{1 \le i \le r} \left( \sum_{1 \le j \le r} a_{i,j} X_j \right)^{p-1}\right\} = \det A^{\,p-1}
\end{equation}
holds.
Here, $\left[ \prod_{1 \le j \le r} X_j^{p-1}\right]$ means the coefficient of $\, \prod_{1 \le j \le r} X_j^{p-1}$.    
\end{thrm}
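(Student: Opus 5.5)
We expand the left-hand side of (\ref{GlynnEqualityCoeff}) and compare it with the Leibniz expansion of $\det A$. Write each factor via the multinomial theorem:
\[
\Bigl(\sum_{j} a_{i,j}X_j\Bigr)^{p-1}=\sum_{k_{i,1}+\cdots+k_{i,r}=p-1}\binom{p-1}{k_{i,1},\ldots,k_{i,r}}\prod_j a_{i,j}^{k_{i,j}}X_j^{k_{i,j}} .
\]
The coefficient of $\prod_j X_j^{p-1}$ is then a sum over nonnegative integer matrices $K=(k_{i,j})$ whose row sums and column sums all equal $p-1$. The summand is $\prod_i\binom{p-1}{k_{i,\cdot}}\prod_{i,j}a_{i,j}^{k_{i,j}}$.

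The key observation is that in characteristic $p$ the multinomial coefficient simplifies. Since $(p-1)!=-1$ and every $k_{i,j}\le p-1$, we have
\[
\binom{p-1}{k_{i,1},\ldots,k_{i,r}}=\frac{-1}{\prod_j k_{i,j}!}.
\]
Moreover $k!\,(p-1-k)!=(-1)^{k+1}$ in $\F_p$, so $1/k!=(-1)^{k+1}(p-1-k)!$. Thus the left-hand side becomes
\[
\sum_{K}\ (-1)^r\prod_{i,j}\frac{a_{i,j}^{k_{i,j}}}{k_{i,j}!},
\]
where $K$ runs over $r\times r$ "magic" matrices with line sums $p-1$.

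Next we expand the right-hand side $\det A^{p-1}=\bigl(\sum_\sigma \mathrm{sgn}(\sigma)\prod_i a_{i,\sigma(i)}\bigr)^{p-1}$ by the multinomial theorem over permutations:
\[
\det A^{p-1}=\sum_{(n_\sigma)}\binom{p-1}{(n_\sigma)}\prod_\sigma \mathrm{sgn}(\sigma)^{n_\sigma}\prod_{i}a_{i,\sigma(i)}^{n_\sigma},
\]
with $\sum_\sigma n_\sigma=p-1$. Each such tuple gives a magic matrix $K=\sum_\sigma n_\sigma P_\sigma$. By Birkhoff–von Neumann, every magic matrix with line sums $p-1$ arises this way. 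So the claim reduces to the identity
\[
(-1)^r\prod_{i,j}\frac1{k_{i,j}!}=\sum_{(n_\sigma):\ \sum n_\sigma P_\sigma=K}\frac{-1}{\prod_\sigma n_\sigma!}\prod_\sigma \mathrm{sgn}(\sigma)^{n_\sigma}\quad\text{in }\F_p
\]
for every such $K$.

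This identity is the main obstacle, since the Birkhoff decomposition is far from unique. Rather than prove it combinatorially, I would prove it by a generating-function trick that avoids decompositions altogether. Over $\F_p$, use $\sum_{x\in\F_p}x^{m}=-1$ if $(p-1)\mid m$ with $m>0$, and $0$ otherwise. The extraction of $[\prod X_j^{p-1}]$ is then replaced by summing over $X\in\F_p^r$, after a degree argument. All exponents of each $X_j$ lie in $[0,r(p-1)]$, so several multiples of $p-1$ could survive, and this has to be controlled.

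The cleaner route is the following. First, both sides are polynomials in the entries $a_{i,j}$, so it suffices to prove the identity for $A$ with entries in $\overline{\F_p}$, and by Zariski density for invertible $A$. For singular $A$, there is a nonzero $v$ with $Av=0$ after transposition; I would handle this case by density after proving the invertible case. Both sides behave multiplicatively under elementary column operations $X\mapsto$ linear substitution: the left-hand side is the coefficient functional applied to $\prod_i \ell_i(X)^{p-1}$, where $\ell_i$ is the $i$-th linear form. For any invertible $B$ this functional should equal $\det B^{p-1}$ times itself under the substitution $X\mapsto BX$. I would justify this by checking it for diagonal matrices (scaling $X_j\mapsto cX_j$ multiplies the coefficient by $c^{p-1}$), for permutations (trivially), and for transvections $X_1\mapsto X_1+cX_2$. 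The transvection check is the crux: one shows that the coefficient of $X_1^{p-1}X_2^{p-1}$ is unchanged, because the extra terms involve $X_2^{p-1+m}$ with $1\le m$, or are multiples of $\binom{p-1+m}{m}$-type coefficients vanishing mod $p$, after reducing modulo the ideal $(X_j^{p}-X_j)$ appropriately. With invariance established, reducing $A$ to the identity gives the left-hand side $=\det A^{p-1}\cdot 1$, since $\prod X_i^{p-1}$ has coefficient $1$.
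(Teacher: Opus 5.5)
\textbf{Verdict: your final route is correct, and it differs genuinely from the paper's.} Your first two attempts (the Birkhoff-decomposition identity and summing over $\F_p^r$) are never completed, but nothing downstream uses them, so they can simply be deleted.

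\textbf{The paper's route.} It rewrites the coefficient as $(-1)^r\prod_j(\partial/\partial X_j)^{p-1}$ applied to the product. It then inducts on $r$: it computes $(\partial/\partial X_1)^{p-1}$ through a partial fraction decomposition in $X_1$ (the same device as in Lemma~\ref{lemma1}). It applies the induction hypothesis to the $(r-1)\times(r-1)$ matrices $A_i$ obtained by eliminating the first column with row $i$. Finally it recombines using $\sum_i\prod_{m\ne i}u_m/(u_m-u_i)=1$.

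\textbf{Your route.} You prove the stronger statement that the functional $\Phi(F)=[\prod_j X_j^{p-1}]F$ satisfies $\Phi(F(BX))=\det B^{\,p-1}\,\Phi(F(X))$ for every polynomial $F$ and every invertible $B$. You check this on elementary generators and then take $F=\prod_i X_i^{p-1}$, $B=A$.

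\textbf{Comparison.} Your argument needs no induction on $r$ and no partial fractions, and its only arithmetic input is a binomial congruence. It also explains the theorem as the semi-invariance of a linear functional. The singular case is even immediate, so density is optional: write $A=BDC$ with $B,C$ invertible and $D$ diagonal with a zero entry, and note that such a $D$ multiplies $\Phi$ by $0^{p-1}=0$. The paper's route stays inside the partial-fraction technique it already uses, and it too tacitly needs genericity, for instance $a_{i,1}\neq 0$.

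\textbf{The transvection step needs to be stated precisely.} Under $X_1\mapsto X_1+cX_2$, a monomial $X_1^{a_1}X_2^{a_2}\prod_{j\ge 3}X_j^{a_j}$ can contribute to $\prod_j X_j^{p-1}$ only if $a_j=p-1$ for $j\ge 3$ and $a_1+a_2=2p-2$. In that case it contributes $\binom{a_1}{p-1}c^{\,a_1-p+1}$. This is $0$ for $a_1<p-1$, equals $1$ for $a_1=p-1$, and is $\equiv 0 \pmod p$ for $p\le a_1\le 2p-2$.

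The upper bound $a_1\le 2p-2$ is essential; for example $\binom{2p-1}{p-1}\equiv 1$. Drop the phrase about reducing modulo $(X_j^p-X_j)$. That reduction does not preserve the coefficient of $\prod_j X_j^{p-1}$, since it identifies $X_j^{2p-2}$ with $X_j^{p-1}$, and the argument does not need it.
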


\begin{proof}
The equality (\ref{GlynnEqualityCoeff}) is equivalent to
\begin{equation}\label{GlynnEqualityDiff}
(-1)^r  \prod_{1 \le j \le r} \left( \dfrac{\partial}{\partial X_j}\right)^{p-1}\prod_{1 \le i \le r} \left( \sum_{1 \le j \le r} a_{i,j} X_j \right)^{p-1} = \det A^{\,p-1},
\end{equation}
which we shall prove by induction on $r$.
The case $r=1$ is immediate.
We assume $r > 1$.
For $1 \le i \le r$, we put $L_i = \sum_{2 \le j \le r} a_{i,j} X_j$.
We have
\begin{align*}
&
\left( \frac{\partial}{\partial X_1}\right)^{p-1} \prod_{1 \le i \le r} \left( a_{i,1}X_1+L_i\right)^{p-1} \\
&=
\prod_{1 \le i \le r} \left( a_{i,1}X_1+L_i\right)^{p} \left( \frac{\partial}{\partial X_1}\right)^{p-1} \prod_{1 \le i \le r}\frac{1}{a_{i,1}X_1+L_i} \\
&=
\prod_{1 \le i \le r} \left( a_{i,1}X_1+L_i\right)^{p} \left( \frac{\partial}{\partial X_1}\right)^{p-1} 
\sum_{1 \le i \le r} 
\left\{ \prod_{\substack{1 \le m \le r \\ m\ne i}} \frac{1}{L_m - \frac{a_{m,1}}{a_{i,1}} L_i}\right\} \frac{1}{ a_{i,1}X_1+L_i} \\
&=
\sum_{1 \le i \le r} \frac
{\prod_{\substack{1 \le m \le r \\ m\ne i}}(a_{m,1}X_1+L_m)^p }
{\prod_{\substack{1 \le m \le r \\ m\ne i}} (L_m -\frac{a_{m,1}}{a_{i,1}} L_i)}
(- a_{i,1}^{p-1}).
\end{align*}
For $i$ ($1 \le i \le r$), we let $A_i$ be the following matrix.
For each $m$ ($1 \le m \le r$, $m\ne i$), we subtract the $i$-th row of $A$ multiplied by $a_{m,1}/a_{i,1}$ from the $m$-th row of $A$.
Furthermore, we delete the first column and the $i$-th row.
The resulting matrix is denoted by $A_i$.
We have $(-1)^{i-1} a_{i,1} \det A_i = \det A$.
For a fixed $i$ ($1 \le i \le r$), we can apply the induction assumption as follows. 
\begin{align*}
&
\prod_{2 \le j \le r} \left( \frac{\partial}{\partial X_j}\right)^{p-1} \frac{1}{\prod_{\substack{1 \le m \le r \\ m \ne i}} ( L_m - \frac{a_{m,1}}{a_{i,1}} L_i)} \\
&=
\frac{1}{\prod_{\substack{1 \le m \le r \\ m \ne i}} ( L_m - \frac{a_{m,1}}{a_{i,1}} L_i)^p}  
\prod_{2 \le j \le r} \left( \frac{\partial}{\partial X_j}\right)^{p-1}  \prod_{\substack{1 \le m \le r \\ m \ne i}} ( L_m - \frac{a_{m,1}}{a_{i,1}} L_i)^{p-1} \\
&=
\frac{1}{\prod_{\substack{1 \le m \le r \\ m \ne i}} ( L_m - \frac{a_{m,1}}{a_{i,1}} L_i)^p}  
(-1)^{r-1} \det A_i^{p-1}
\end{align*}
Then, the left hand side of (\ref{GlynnEqualityDiff}) is equal to
\begin{equation*}
\sum_{1 \le i \le r}
\left\{ \prod_{\substack{1 \le m \le r \\ m\ne i}} (X_1 + \frac{L_m}{a_{m,1}}) \right\}^p
\left\{ \prod_{\substack{1 \le m \le r \\ m \ne i}} \frac{1}{(X_1+\frac{L_m}{a_{m,1}} )-(X_1 + \frac{L_i}{a_{i,1}} )}\right\}^p \det A^{\, p-1}.
\end{equation*}
We apply the quality
\begin{equation*}
\sum_{1 \le i \le r} 
\left( \prod_{\substack{1 \le m \le r \\ m \ne i}} u_m\right)
\left( \prod_{\substack{1 \le m \le r \\ m \ne i}} \frac{1}{u_m - u_i}\right) = 1,
\end{equation*}
which is obtained by putting $u=0$ on the partial fraction decomposition of $\dfrac{1}{\prod_{1 \le i \le r} (u-u_i)}$,
to obtain (\ref{GlynnEqualityDiff}).
\end{proof}
Now we give the second experimental equality.
Let $A = \left( a_{i,j} \right)_{1 \le i,j \le r}$ be a square matrix of size $r \ge 1$ in characteristic $p$.
For $0 \le e \le p-1$, we put
\begin{equation*}
G^e(A) = \left[ \prod _{1 \le j \le r} X_j^e \right] \prod_{1 \le i \le r} \left( \sum_{1 \le j \le r} a_{i,j} X_j \right)^e.
\end{equation*}
Here, $\left[ \prod_{1 \le j \le r} X_j^e \right]$ means the coefficient of $\prod_{1 \le j \le r} X_j^e$.

For $0 \le e \le p-1$, we put $\widehat{e} = (p-1)-e$.
We denote the adjoint matrix of $A$ by $\widehat{A}$, i.e., $ A \widehat{A} = \det A$.
Then, the second experimental equality is that
\begin{equation}\label{ExperimentalEquality2}
\frac{G^e (A)}{G^{\widehat{e}}(\widehat{A})  } = \det A^{\, p-1-r \widehat{e} }.
\end{equation}
The equality (\ref{ExperimentalEquality2}) for $0 < e < p-1$ remains unproven.
%
%
%
%
%
%
\bibliographystyle{plain}
\bibliography{MyBib}
\end{document}